\documentclass{article}

\usepackage{amsmath}
\usepackage{amsthm}
\usepackage{amssymb}
\usepackage{amscd}
\usepackage[dvipdfmx]{graphicx}
\allowdisplaybreaks[4]

\theoremstyle{plain}
\newtheorem{theo}{Theorem}[section]
\newtheorem{prop}[theo]{Proposition}
\newtheorem{lemm}[theo]{Lemma}
\newtheorem{cor}[theo]{Cororally}

\theoremstyle{definition}
\newtheorem{defi}[theo]{Definition}
\newtheorem{exa}[theo]{Example}

\newtheorem{rmk}[theo]{Remark}

\newcommand{\identitymap}[1]{\mathrm{id}_{#1}} 
\numberwithin{equation}{section}

\newcommand{\vect}[1]{(#1_1,#1_2,#1_3,\ldots)}
\newcommand{\Rep}[1]{Cl_{\mathbb{Z}}(#1)}

\newcommand{\circg}[1]{\langle #1 \rangle}
\newcommand{\Res}[2]{\mathrm{Res}^{#1}_{#2}}

\newcommand{\Lam}[1]{\Lambda(#1)}
\newcommand{\Nr}[1]{Nr(#1)}
\newcommand{\Gh}[1]{#1^{\mathbb{N}}}
\newcommand{\Eva}[2]{\mathrm{E}^{#1}_{#2}}
\newcommand{\supp}{\mathrm{supp}}
\newcommand{\Dpr}{D^{\rm{pr}}}

\newcommand{\Mobius}{M\"{o}bius}
\newcommand{\masm}{\textbf{q}}

\newcommand{\bigzerol}{\smash{\hbox{\large 0}}}       
\newcommand{\bigzerou}{\smash{\lower.3ex\hbox{\large 0}}}  

\begin{document}

\author{Tomoyuki Tamura\\ Graduate school of Mathematics\\ Kyushu University \\ 2017}
\title{Exterior powers of representations of finite groups and integer-valued characters }
\date{}
\maketitle

\begin{abstract}
For given representation of finite groups on a finite dimension complex vector space, we can define  exterior powers of representations. In 1973, Knutson found one of methods of calculating the character of exterior powers of representations with properties of $\lambda$-rings. In this paper \footnote[1]{2010 Mathematics Subject Classification. Primary 20C15, Secondary  19A22; 13F35.\\ Keywords and Phrases; Exterior powers representation; finite group; $\lambda$-ring; necklace ring.}, we base this result of Knutson, and relate characters and elements of necklace rings, which were introduced by N.Metropolis and G-C.Rota in 1983, via a generating function of the character of exterior powers of representations. We focus on integer-valued characters and discuss a relation between integer-valued characters and element of necklace rings which has finite support and is contained in some images of truncated operations.
\end{abstract}

%

\section{Introduction}
Let $G$ be an arbitrary group. For a given representation $\rho:G\rightarrow GL(V)$ on a finite dimensional complex vector space $V$, we can define the $i$-th exterior power of the representation $\rho$ by 
\begin{eqnarray*}
&{}&\Lambda^i\rho:G\rightarrow GL(\textstyle\bigwedge^i(V)),\\
&{}&\Lambda^i\rho(g)(v_1\wedge\cdots\wedge v_i):=(\rho(g)v_1)\wedge\cdots\wedge (\rho(g)v_i)
\end{eqnarray*}
for any integer $i\geq 0$, where $g\in G$ and $v_1,\ldots,v_i\in V$.

There exists the problem to find how to calculate exterior powers of representations. It was raised in \cite{Knu}. This problem was raised by Knutson \cite{Knu}. Knutson discovered the method using $\lambda$-rings to calculate the character of $\Lambda^i\rho$ (For $\lambda$-rings, see \S 2.2, \cite{Knu} or \cite{Yau}). This method is as follows: Let $CF(G)$ be the set of all class functions $f:G\rightarrow\mathbb{C}$, which satisfies $f(g^{-1}xg)=f(x)$ for any $x,g\in G$. The set $CF(G)$ is a commutative $\mathbb{C}$-algebra, in particular, is a commutative $\mathbb{Q}$-algebra. Moreover, the set $CF(G)$ has a $\lambda$-ring structure where the $n$-th Adams operation $\psi^n$ satisfies $\psi^n(f)(g):=f(g^n)$ for any $f\in CF(G)$, $g\in G$ and integer $n\geq 1$. Knutson \cite[p.84]{Knu} showed that the character of $\textstyle\Lambda^i\rho$ equals $\lambda^i(\chi)$ for any integer $i\geq 0$ where $\chi$ is the character of a representation $\rho$ and $\lambda^i$ is a $\lambda$-operation of $CF(G)$. 

We note the relation between this result in \cite{Knu} and representation rings and previous studies of representation rings. Let $R(G)$ be a representation ring of $G$, which has a pre-$\lambda$-ring structure whose $\lambda$-operation is defined via exterior powers of representations. The map $X:R(G)\rightarrow CF(G)$ defined by that $X([V])$ is the character of $V$, is an injective ring homomorphism where $[V]$ is an isomorphism class of a representation $V$. From this result in \cite{Knu}, the map $X$ can preserve both of $\lambda$-operations. Thus, the study of a character of the exterior powers of representations is reduced to the study of the $\lambda$-ring structure on representation rings.

We recall previous studies on a representation ring of a group as a $\lambda$-ring after \cite{Knu}. For a set of generators of the representation ring as a $\lambda$-ring, Boorman \cite{Boo} or Bryden \cite{Bry} found the case of symmetric groups or Weyl groups associated with the root systems of type $B_n$ and $D_n$ in 1975 or 1995, respectively. For continuous groups, Osse \cite{Oss} characterized a representation ring of a compact connected Lie group in terms of a $\lambda$-ring. Yau \cite{Yau} arises the problem to characterize a representation ring of a finite group in terms of a $\lambda$-ring.

Because the set of all virtual characters is isomorphic to a representation ring as a $\lambda$-ring, the study of the $\lambda$-ring structure of representation rings is equivalent to the study of the $\lambda$-ring structure of the set of virtual characters. In this paper, we will focus on virtual characters, not representation rings. 

Let $G$ be a finite group. For any character $\chi$ of a representation $\rho: G\rightarrow GL(V)$, Knutson showed that $\lambda^i(\chi)$ is the character of the $i$-th exterior power of $\rho$. We define a power series $\lambda_t(\chi)(g)$ by
\begin{eqnarray}\label{IntroLambda1}
\lambda_t(\chi)(g):=\sum_{i=0}^{\infty}\lambda^i(\chi)(g)t^i
\end{eqnarray}
for any $g\in G$. It has the following form:
\[ \lambda_t(\chi)(g)= \mathrm{det} (I_m+\rho(g)t)= \exp \Big(\sum_{i=1}^{\infty}-\dfrac{\chi(g^i)}{i}(-t)^i \Big) \]
where $I_m$ is the unit matrix whose size is $m=\deg(\rho)$.

\begin{exa}
Let $X$ be a finite $G$-set and let $\chi$ be the permutation character associated with $X$. It is known that $\chi(g)=|\mbox{Fix}(g)|$ holds where 
\[\mbox{Fix}(g):=\{x\in X\ |\ gx=x\} \]
for any $g\in G$. Then we have
\[ \lambda_t(\chi)(g)=\exp\Big(\sum_{i=1}^{\infty}-\dfrac{|\mbox{Fix}(g^i)|}{i}(-t)^i \Big).\]
\end{exa}

We discuss relations between integer-valued characters of finite groups and necklace rings via (\ref{IntroLambda1}). An integer-valued character is a virtual character $\chi$ such that $\chi(g)\in\mathbb{Z}$ holds for any $g\in G$. 

The necklace ring over a commutative ring $R$ was introduced by Metropolis and Rota in 1983 to investigate the notion of Witt rings as a commutative ring. For any commutative ring $R$, the set of all infinite vectors 
\[ a=\vect{a},\quad a_i\in R\]
has a commutative ring structure whose addition ``$+_{Nr}$" is defined by componentwise and multiplication $\alpha=\vect{\alpha}:=x\cdot_{Nr} y$ is defined by
\begin{eqnarray}
\alpha_n:=\sum_{[i, j]=n}(i, j)x_i y_j
\end{eqnarray}
where $x=\vect{x}, y=\vect{y}\in\Gh{R}$. With definitions of the addition ``$+_{Nr}$" and the multiplication ``$\cdot_{Nr}$", we call the set of all infinite vectors the necklace ring over $R$, written by $\Nr{R}$. 

We will relate characters of $G$ and elements of necklace rings as follows: Let $\Lam{R}=1+tR[[t]]$ be a universal $\lambda$-ring with an indeterminate variable $t$. Knutson \cite{Knu} showed that $\Lam{R}$ has a $\lambda$-ring structure where the addition ``$+_{\Lambda}$" is defined by the multiplication of $R[[t]]$. If $R$ is a binomial ring, which is a $\mathbb{Z}$-torsion free $\lambda$-ring whose all Adams operations of $R$ are identity maps, then for each $f\in\Lambda(R)$ there exists a unique element $\vect{a}\in\Nr{R}$ such that 
\begin{eqnarray}\label{IntroLambda2}
f(t)=\prod_{i=1}^{\infty}(1-(-t)^i)^{a_i}.
\end{eqnarray}
holds. Yau \cite[\S 5.6]{Yau} showed that the map $E_{Nr}:\Lam{R}\rightarrow \Nr{R}$ defined by $E_{Nr}(f):=\vect{a}$ is a ring isomorphism. 

For any finite group $G$, the set $CF(G)$, which is the set of all class functions from $G$ to $\mathbb{C}$, also has a binomial ring structure. When $f(t)$ is the generating function $\lambda_t(\chi)\in\Lambda(CF(G))$ in (\ref{IntroLambda2}), it is sufficient to calculate $E_{Nr}(\lambda_t(\chi))$ in order to calculate $\lambda^i(\chi)$. We will relate characters of $G$ and elements of necklace rings using the map $E_{Nr}\circ\lambda_t$.

Now, we outline this paper. In \S 2, we will state basic definitions of necklace rings, $\lambda$-rings, representations of finite groups and the relation among them. 

In \S 3, we will show main theorems of this paper. Let $e$ be the exponent of $G$. Then, a character $\chi$ is an integer-valued character if and only if $a_n=0$ for any integer $n\geq 1$ with $n\nmid e$ where $\vect{a}=E_{Nr}(\lambda_t(\chi))$ (Theorem \ref{maintheorem2}). 

Next, we will discuss the support of an element $\vect{a}\in\Nr{R}$. We will say that $a=\vect{a}$ has the finite support if the number of integers $n\geq 1$ such that $a_n\neq 0$ is finite. By Theorem \ref{maintheorem2}, the element $E_{Nr}(\lambda_t(\chi))$ has the finite support for any integer-valued character $\chi$. We will show that if $E_{Nr}(\lambda_t(\chi))$ has the finite support, then the character $\chi$ is an integer-valued character by Theorem \ref{main4theo3} from an arbitrary $\mathbb{Z}$-torsion free commutative ring. 

Finally, we will discuss the case of a product group of two finite groups $G_1$ and $G_2$. Let $\chi_1$ be a virtual character of $G_1$ and let $\chi_2$ be a virtual character of $G_2$. Then we define a virtual character $\chi_1\chi_2$ of $G_1\times G_2$ by $\chi_1\chi_2((g_1,g_2)):=\chi_1(g_1)\chi_2(g_2) $ for any $g_1\in G_1$ and $g_2\in G_2$. We will show that 
\begin{eqnarray}\label{1main3}
\lambda_t(\chi_1\chi_2)((g_1,g_2))=\lambda_t(\chi_1)(g_1)\cdot_{\Lambda}\lambda_t(\chi_2)(g_2)
\end{eqnarray}
holds in Theorem \ref{main3MMM}, where ``$\cdot_{\Lambda}$" is the multiplication of the universal $\lambda$-ring $\Lambda(\mathbb{C})$.

In addition, we study an equation obtained from (\ref{1main3}) using the map $E_{Nr}$. Under the assumption that characters $\chi_1$ and $\chi_2$ are integer-valued characters, we calculate the multiplication of two elements $E_{Nr}(\lambda_t(\chi_1))$ and $E_{Nr}(\lambda_t(\chi_2))$ in order to calculate the left side of (\ref{1main3}). Note that $E_{Nr}(\lambda_t(\chi_1))$ and $E_{Nr}(\lambda_t(\chi_2))$ have the finite support. Moreover, we will give another form of the multiplication of two elements of necklace rings which have finite support with Frobenius operations of necklace rings.

In \S 4, we consider the character $\chi$ of a representation of $S_n$ which will be defined with an $R$-matrix, and the calculation method of $E_{Nr}(\lambda_t(\chi))$ as an example
Here, we denote the following notations in this paper.
\begin{itemize}
\item[(i)] Let $\mathbb{N}$, $\mathbb{Z}$ or $\mathbb{Q}$ be the set of all positive integers, integers or rational integers, respectively. 
\item [(ii)]For any integers $i$ and $j$, a symbol $i\mid j$ stands for that $i$ divides $j$ and $i\nmid j$ stands for that $i$ does not divide $j$.
\item [(iii)]For any integers $i$ and $j$, a symbol $(i,j)$ stands for the greatest common divisor of $i$ and $j$, and a symbol $[i,j]$ stands for the least common multiple of $i$ and $j$.
\item [(iv)]We denote the\ \Mobius\ function by $\mu$.
\item [(v)]For any set $X$, we denote the identity map on $X$ by $\identitymap{X}$ defined by $\identitymap{X}(x)=x$ for any $x\in X$.
\item [(vi)]Let $\mathbb{C}$ be the complex field, and let $M(k,\mathbb{C})$ be the set of all complex matrices whose size is $k\times k$. 
\item [(vii)] We denote the unit element of a finite group by $1$.
\item [(viii)] For each element $g$ of a finite group $G$, we denote by $\circg{g}$ the subgroup of $G$ generated by $g\in G$, and by $O(g)$ the order of $g$. We define the exponent of $G$ by the least common multiple of all $O(g)$'s for $g\in G$.
\item [(ix)] We assume that every ring and semiring have the unit element, written by $1$.
\end{itemize}

\section*{Acknowledgement}
I would like to thank my supervisor, Professor Hiroyuki Ochiai, for his appropriate advice even under such circumstances. Without this advice, I believe that this paper was not completed.

\section{Preliminaries}
In this section, we will define some notations to state main results of this paper in \S 3. In \S 2.1, we will discuss necklace rings and operations. In \S 2.2, we will state definitions and properties of pre-$\lambda$-rings and $\lambda$-rings. In \S 2.3, we will discuss a relation between a character of exterior powers of representations of finite groups and $\lambda$-rings.

\subsection{Necklace rings and operations}\label{PMPrelim}
Main references of \S \ref{PMPrelim} are \cite{MR} and \cite{VW}. Given a commutative ring $R$, let $R^{\mathbb{N}}$ be the set of all infinite vectors whose elements belong to $R$. The set $\Gh{R}$ has a commutative ring structure whose addition and multiplication are defined by componentwise. The zero element of $\Gh{R}$ is $(0,0,0,\ldots)$ and the unit element of $\Gh{R}$ is $(1,1,1,\dots,)$. In addition, we define another commutative ring structure on $R^{\mathbb{N}}$. Its addition ``$+_{Nr}$" is defined by componetwise, and multiplication ``$\cdot_{Nr}"$ by
\begin{eqnarray*}
\alpha_n:=\sum_{[i,j]=n}(i,j)x_iy_j
\end{eqnarray*}
where $x=\vect{x},y=\vect{y}\in \Gh{R}$ and $\alpha=\vect{\alpha}=x\cdot_{Nr} y$. If we regard $R^{\mathbb{N}}$ as a commutative ring with such operations, then we call $R^{\mathbb{N}}$ the necklace ring over $R$, written by $\Nr{R}$. The zero element of $\Nr{R}$ is $(0,0,0,\ldots)$ and the unit element of $\Nr{R}$ is $(1,0,0,\ldots)$.

Next, we define a map $\phi:\Nr{R}\rightarrow \Gh{R}$ by $\phi(x):=\vect{a}$ where
\[ a_n:=\sum_{d\mid n}dx_d,\quad x=\vect{x}\]
for any $x \in\Nr{R}$ and integer $n\geq 1$. 

\begin{prop}\label{prophi}
The map $\phi$ is a ring homomorphism. If $R$ is $\mathbb{Z}$ torsion-free, then the map $\phi$ is injective. If $R$ is a $\mathbb{Q}$-algebra, then the map $\phi$ is bijective.
\end{prop}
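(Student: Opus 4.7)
The plan is to handle the three assertions in order: first verify the ring homomorphism property directly from the definitions, then use \Mobius\ inversion for injectivity and for the explicit construction of a preimage in the $\mathbb{Q}$-algebra case.

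First I would check that $\phi$ preserves addition and the unit. Addition is componentwise on both sides, so $\phi(x+_{Nr}y)_n=\sum_{d\mid n}d(x_d+y_d)=\phi(x)_n+\phi(y)_n$ is immediate. Since the unit of $\Nr{R}$ is $(1,0,0,\ldots)$, one computes $\phi(1_{Nr})_n=\sum_{d\mid n}d\cdot\delta_{d,1}=1$, hence $\phi(1_{Nr})=(1,1,1,\ldots)=1_{\Gh{R}}$. The main routine step is multiplicativity: writing $\alpha=x\cdot_{Nr}y$,
\[
\phi(\alpha)_n=\sum_{d\mid n}d\,\alpha_d=\sum_{d\mid n}\sum_{[i,j]=d}d(i,j)x_iy_j=\sum_{\substack{i,j\geq 1\\ [i,j]\mid n}}[i,j](i,j)x_iy_j.
\]
Using the elementary identity $[i,j](i,j)=ij$ and the fact that $[i,j]\mid n$ is equivalent to $i\mid n$ and $j\mid n$, this becomes
\[
\sum_{i\mid n}\sum_{j\mid n}ij\,x_iy_j=\Bigl(\sum_{i\mid n}ix_i\Bigr)\Bigl(\sum_{j\mid n}jy_j\Bigr)=\phi(x)_n\phi(y)_n,
\]
which is the product in $\Gh{R}$. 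So $\phi$ is a ring homomorphism.

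For injectivity under the $\mathbb{Z}$-torsion free hypothesis, suppose $\phi(x)=0$, i.e.\ $a_n:=\sum_{d\mid n}dx_d=0$ for every $n\geq 1$. Setting $b_d:=dx_d$, the relation $a_n=\sum_{d\mid n}b_d$ is inverted by the \Mobius\ formula to give $nx_n=b_n=\sum_{d\mid n}\mu(n/d)a_d=0$. Since $R$ is $\mathbb{Z}$-torsion free, $x_n=0$ for every $n$, hence $x=0$.

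For surjectivity when $R$ is a $\mathbb{Q}$-algebra, injectivity already follows from the previous paragraph. Given $a=\vect{a}\in\Gh{R}$, define
\[
x_n:=\frac{1}{n}\sum_{d\mid n}\mu(n/d)a_d\in R,
\]
which is well defined because $n$ is invertible in $R$. By \Mobius\ inversion, $\sum_{d\mid n}dx_d=\sum_{d\mid n}b_d=a_n$ where $b_d=dx_d$, so $\phi(\vect{x})=a$. The only real obstacle is the multiplicativity computation above; everything else reduces to standard \Mobius\ inversion, with the invertibility of integers in $R$ supplying the preimage in part three.
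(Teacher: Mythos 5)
Your proof is correct, and it differs from the paper's in two small but genuine ways. For the ring-homomorphism claim, the paper simply cites Yau, Lemma 5.4, whereas you give the full direct computation; the key observation you use, namely that the double sum over $d\mid n$ and $[i,j]=d$ collapses via $[i,j](i,j)=ij$ and the equivalence $[i,j]\mid n \iff (i\mid n \text{ and } j\mid n)$, is exactly the content of that cited lemma, so your argument makes the proposition self-contained. For injectivity, the paper argues by induction on $n$: from $a_n=0$ and $x_k=0$ for $k<n$ it extracts $nx_n=0$ directly from the defining sum, while you invoke \Mobius\ inversion in the form $nx_n=\sum_{d\mid n}\mu(n/d)a_d$ to reach the same $nx_n=0$ without induction. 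Both are equally elementary; your version has the merit of treating injectivity and surjectivity by the same inversion formula, so the two halves of the argument run in parallel, whereas the paper keeps the induction for injectivity and reserves \Mobius\ inversion for constructing the preimage. Your surjectivity argument is identical to the paper's.
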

\begin{proof}
The fact that $\phi$ is a ring homomorphism was proved in \cite[Lemma 5.4]{Yau}.

We assume that $R$ is $\mathbb{Z}$-torsion free, and prove that the map $\phi$ is injective. Let $x=\vect{x}$ be an element of $\Nr{R}$ and assume $\vect{a}=\phi(x)=0$. We prove $x_n=0$ by induction on $n\geq 1$. By the definition of the map $\phi$, we have $x_1=a_1=0$. Let $n\geq 2$ be an integer and we assume that $x_k=0$ holds for any integer $k$ with $k<n$. We use the definition of the map $\phi$ again, so we have
\[ na_n=-\sum_{d\mid n, d\neq n}dx_d=0. \]
Since $R$ is $\mathbb{Z}$-torsion free, we have $x_n=0$. Hence, we have $x=0$.

Next, we assume that $R$ is a $\mathbb{Q}$-algebra and prove that the map $\phi$ is bijective. It is enough to prove that the map $\phi$ is surjective, because a $\mathbb{Q}$-algebra commutative ring is $\mathbb{Z}$-torsion free. For any $a=\vect{a}\in\Gh{R}$, put $x=\vect{x}\in\Nr{R}$ satisfying that 
\[ x_n=\dfrac{1}{n}\sum_{d\mid n}\mu\Big(\dfrac{n}{d}\Big)a_d\]
for any integer $n\geq 1$. We have $\phi(x)=a$ by the\ \Mobius\ inversion formula, that is, the map $\phi$ is surjective.
\end{proof}
For any integer $r\geq 1$, we define maps
\begin{eqnarray*}
V_r&:&\Nr{R}\rightarrow \Nr{R},\\
V_r&:&\Gh{R}\rightarrow \Gh{R},\\
F_r&:&\Nr{R}\rightarrow \Nr{R},\\
F_r&:&\Gh{R}\rightarrow \Gh{R}
\end{eqnarray*}
as follows: Let $x=\vect{x}\in\Nr{R}$. Then $\vect{y}=V_r(x)$ and\\ $\vect{z}=F_r(x)$ satisfy
\begin{eqnarray*}
y_n&:=&\begin{cases} x_{n/r} & \mbox{if}\ r\mid n, \\ 0 & \mbox{if}\ r\nmid n,\end{cases}\\
z_n&:=& \sum_{[j,r]=nr}\dfrac{j}{n}x_j\\
\end{eqnarray*}
for any integer $n\geq 1$. 

Let $a=\vect{a}\in\Gh{R}$. Then $\vect{b}=V_r(a)$ and \\$\vect{c}=F_r(a)$ satisfy
\begin{eqnarray*}
b_n&:=&\begin{cases} ra_{n/r} & \mbox{if}\ r\mid n, \\ 0 & \mbox{if}\ r \nmid n,\end{cases}\\
c_n&:=&a_{nr}
\end{eqnarray*}
for any integer $n\geq 1$. 

In this paper, we call $V_r$ the $r$-th Verschiebung operation, and $F_r$ the $r$-th Frobenius operation, respectively \cite{MR} and \cite{VW}.
\begin{prop}\label{proVF}
The operations $V_r$ and $F_s$ satisfy the following relations.
\begin{itemize}
\item[{\rm (1)}] $V_r\circ V_s=V_{rs},$
\item[{\rm (2)}] $F_r\circ F_s=F_{rs},$
\item[{\rm (3)}] $F_r\circ V_r=r\identitymap{R},$
\item[{\rm (4)}] $F_r\circ V_s=V_s\circ F_r$ {\rm if} $(r,s)=1$,
\item[{\rm (5)}] $F_r\circ V_s=(r,s)F_{r/(r,s)}\circ V_{s/(r,s)}$.
\end{itemize}
\end{prop}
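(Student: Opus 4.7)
My plan is to prove (1)--(5) first on the ghost ring $R^{\mathbb{N}}$, where the formulas $V_r(a)_n = r a_{n/r}$ (when $r\mid n$) and $F_r(a)_n = a_{nr}$ make every verification essentially immediate, and then transfer the identities to $\Nr{R}$ through the ghost map $\phi$ of Proposition \ref{prophi}. When $R$ is $\mathbb{Z}$-torsion free, the transfer follows from injectivity of $\phi$; for an arbitrary commutative ring $R$ I would reduce to the universal polynomial ring $\mathbb{Z}[x_1,x_2,\ldots]$, which is $\mathbb{Z}$-torsion free, and then specialize coefficientwise.

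On $R^{\mathbb{N}}$ the checks are short. For (1) and (2) one substitutes the definitions and the claim is the associativity of multiplication and of the shift. For (3), since $r\mid nr$ holds automatically, $F_r V_r(a)_n = V_r(a)_{nr} = r a_n$. For the general statement (5), which contains (4) as the coprime case, I would set $d=(r,s)$, $r=dr'$, $s=ds'$ with $(r',s')=1$, note that $s\mid nr$ is equivalent to $s'\mid n$, and compare: both $F_r V_s(a)_n$ and $d\,F_{r'}V_{s'}(a)_n$ equal $s\,a_{nr'/s'}$ when $s'\mid n$ and $0$ otherwise.

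The transfer to $\Nr{R}$ requires first establishing $\phi\circ V_r = V_r\circ\phi$ and $\phi\circ F_r = F_r\circ\phi$. The Verschiebung intertwining is a simple divisor reindexing $d\leftrightarrow d/r$. The Frobenius intertwining is the substantive step: I would exhibit the bijection
\[ \bigl\{(d,j) : d\mid n,\ [j,r]=dr\bigr\}\ \longleftrightarrow\ \bigl\{k : k\mid nr\bigr\},\qquad (d,j)\mapsto j, \]
with inverse $k\mapsto (k/(k,r),k)$, and check that the coefficients $j$ on the left assemble correctly into $\sum_{k\mid nr} k\,x_k = \phi(x)_{nr} = F_r\phi(x)_n$ on the right.

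The main obstacle is precisely this bijection: one must verify that the condition $j/(j,r)\mid n$, coming from $[j,r]=dr$ with $d\mid n$, is equivalent to $j\mid nr$. Writing $g=(j,r)$, $j=gj'$, $r=gr'$ with $(j',r')=1$, both conditions collapse to $j'\mid n$, which settles it. Once $\phi$ is known to intertwine the two realizations of $V_r$ and $F_r$, each of (1)--(5) on $\Nr{R}$ follows from the corresponding identity on $R^{\mathbb{N}}$ by applying $\phi$ and invoking its injectivity (after the universal-polynomial-ring reduction, if needed).
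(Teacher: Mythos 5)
Your proposal is correct, but it follows a genuinely different route from the paper. For the necklace‐ring side of the statement the paper simply cites the literature (Metropolis--Rota for (1)--(3), Varadarajan--Wehrhahn for (4)--(5)) and then verifies the five identities on $\Gh{R}$ by the same direct componentwise computation you sketch. You instead make the necklace‐ring half self‑contained: you prove the ghost‐side identities, prove the intertwining $\phi\circ V_r=V_r\circ\phi$ and $\phi\circ F_r=F_r\circ\phi$ (which is exactly the content of the paper's Proposition \ref{phiVF}, and your divisor bijection $(d,j)\leftrightarrow k$ with $k\mid nr$, $d=k/(k,r)$ is the same observation the paper uses there, namely that $[j,r]\mid nr$ iff $j\mid nr$), and then pull the identities back along $\phi$. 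The only extra ingredient you need is the universal‑polynomial‑ring reduction to handle commutative rings with $\mathbb{Z}$‑torsion, which is legitimate since each component of every identity is a fixed $\mathbb{Z}$‑coefficient polynomial in finitely many $x_j$ (note that the integrality of $j/n$ in the definition of $F_r$ on $\Nr{R}$ is automatic, since $[j,r]=nr$ forces $n=j/(j,r)\mid j$). What your approach buys is independence from the external references and a cleaner unification of (4) and (5) --- on the ghost side (4) is literally the coprime case of (5) --- at the cost of having to establish the $\phi$‑intertwining first (which the paper proves anyway, two propositions later and without circularity) and the mild detour through $\mathbb{Z}[x_1,x_2,\ldots]$.
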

\begin{proof}
For necklace ring $\Nr{R}$, identities (1), (2) and (3) were proved in \cite[Theorem 1]{MR}, (4) was proved in \cite[Theorem 5.4]{VW} and (5) was proved in \cite[Proposition 5.5]{VW}. So, we consider this proposition in the case of $\Gh{R}$. Let $a=\vect{a}$ be an element of $\Gh{R}$ and let $n\geq 1$ be an integer.
\begin{itemize}
\item [(1)] Let $b=\vect{b}=V_r(a)$ and $c=\vect{c}=V_s(b)$. If $rs\mid n$, then $c_n=b_{n/s}=a_{n/rs}$ holds. If $rs\nmid n$, then we have $c_n=0$. Hence, we have $V_r\circ V_s=V_{rs}$.
\item [(2)] Let $b=\vect{b}=F_r(a)$ and $c=\vect{c}=F_s(b)$. Then, we have $c_n=b_{sn}=a_{rsn}$, that is, we have $F_r\circ F_s=F_{rs}$.
\item [(3)] Let $b=\vect{b}=V_r(a)$ and $c=\vect{c}=F_r(b)$. Then, we have $c_n=b_{rn}=ra_n$, that is, we have $F_r\circ V_r=r\identitymap{R}$.
\item [(4)] Let $b=\vect{b}=V_s(a)$, $c=\vect{c}=F_r(b)$,\\
$e=\vect{e}=F_r(a)$ and $f=\vect{f}=V_s(e)$. We prove $c_n=f_n$. If $n \mid s$, then we have $c_n=b_{nr}=sa_{nr/s}=sd_{n/s}=f_n$. If $n\nmid s$, we have $c_n=f_n=0$. In any case, we have $c_n=f_n$ for any integer $n\geq 1$, that is, we have $F_r\circ V_s=V_s\circ F_r$.
\item [(5)] We have $F_r\circ V_s=F_{r/(r,s)}\circ F_{(r,s)}\circ V_{(r,s)}\circ V_{s/(r,s)}=(r,s)F_{r/(r,s)}\circ V_{s/(r,s)}$ by identities (1), (2) and (3).
\end{itemize}
\end{proof}
\begin{prop}\label{main4lemm0}
The operation $F_r$ is a ring homomorphism for any integer $r\geq 1$.
\end{prop}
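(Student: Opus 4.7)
The plan is to prove the statement separately for $\Gh{R}$ and $\Nr{R}$, and for the latter to reduce to the former through the ghost map $\phi$ from Proposition \ref{prophi}.

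The case of $\Gh{R}$ is immediate from the definitions: since addition and multiplication on $\Gh{R}$ are componentwise and $F_r$ is simply the index shift $F_r(a)_n=a_{nr}$, one checks directly that
\[
F_r(a+b)_n = a_{nr}+b_{nr},\qquad F_r(a\cdot b)_n = a_{nr}b_{nr},\qquad F_r(1,1,1,\ldots)=(1,1,1,\ldots),
\]
so $F_r$ preserves addition, multiplication, and the unit.

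For $\Nr{R}$ the idea is to establish the intertwining identity $\phi\circ F_r^{\Nr{R}}=F_r^{\Gh{R}}\circ\phi$ and then use injectivity of $\phi$. Unwinding definitions, one side is $\phi(F_r(x))_n=\sum_{d\mid n} d\sum_{[j,r]=dr}(j/d)x_j=\sum_{d\mid n}\sum_{[j,r]=dr} j\,x_j$, and the other is $F_r(\phi(x))_n=\phi(x)_{nr}=\sum_{k\mid nr}k\,x_k$. The key observation is that the condition $[j,r]=dr$ forces $d=j/(j,r)$, so the double sum on the left is reindexed by the single variable $j$ subject to $j/(j,r)\mid n$. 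Writing $j=(j,r)j'$ and $r=(j,r)r'$ with $(j',r')=1$ yields the elementary equivalence $j\mid nr\iff j/(j,r)\mid n$, and the two sums agree term by term.

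Once the intertwining is in hand, the conclusion for $\mathbb{Z}$-torsion-free $R$ is formal: $\phi$ is injective by Proposition \ref{prophi}, and
\[
\phi\!\left(F_r(x+y)-F_r(x)-F_r(y)\right)=0,\qquad \phi\!\left(F_r(xy)-F_r(x)F_r(y)\right)=0
\]
by applying the intertwining and the fact that $\phi$ and $F_r^{\Gh{R}}$ are ring homomorphisms. Injectivity then gives the desired identities in $\Nr{R}$, and $F_r((1,0,0,\ldots))=(1,0,0,\ldots)$ is a direct check. For an arbitrary commutative ring $R$, note that the formula $F_r(x)_n=\sum_{[j,r]=nr}(j/n)x_j$ has integer coefficients, since $j/n=(j,r)$ whenever $[j,r]=nr$; hence the additivity and multiplicativity assertions are polynomial identities with $\mathbb{Z}$ coefficients in the entries of $x$ and $y$. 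They hold in the universal ring $\mathbb{Z}[x_1,x_2,\ldots,y_1,y_2,\ldots]$, which is $\mathbb{Z}$-torsion free, and hence specialize to any $R$.

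The main obstacle is the bookkeeping in the intertwining identity $\phi\circ F_r=F_r\circ\phi$; once the divisibility equivalence $j\mid nr\iff j/(j,r)\mid n$ is recognized, the rest is formal, and the universal specialization argument removes any torsion hypothesis on $R$.
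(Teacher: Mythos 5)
Your proof is correct, but it takes a genuinely different route from the paper's. The paper proves multiplicativity of $F_r$ on $\Nr{R}$ by a direct manipulation of the double sum: starting from $w_n=\sum_{[j,r]=nr}(j/n)\alpha_j$ with $\alpha=x\cdot_{Nr}y$, it reindexes to $\sum_{[p,q,r]=nr}(1/n)px_p\,qy_q$ and then regroups according to $i=[p,r]/r$, $j=[q,r]/r$ to recover $\sum_{[i,j]=n}(i,j)u_iv_j$ with $u=F_r(x)$, $v=F_r(y)$. You instead establish the ghost-map intertwining $\phi\circ F_r=F_r\circ\phi$ (which in the paper's own exposition is the later Proposition \ref{phiVF}) and reduce to the trivial $\Gh{R}$ case: for $\mathbb{Z}$-torsion-free $R$ this follows from injectivity of $\phi$ (Proposition \ref{prophi}), and the torsion hypothesis is removed by observing that the assertion is a polynomial identity with integer coefficients, so it transfers from the universal ring $\mathbb{Z}[x_1,x_2,\ldots,y_1,y_2,\ldots]$ to any commutative ring. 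Your intertwining computation is sound, as is the divisibility equivalence $j\mid nr\iff j/(j,r)\mid n$ that underlies it. The trade-off is that the paper's argument is entirely self-contained and local, whereas yours is more conceptual but borrows the content of Proposition \ref{phiVF} (re-derived correctly in-line, so there is no circularity) and adds the universal-specialization step that the paper avoids by computing directly.
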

\begin{proof}
First, we prove this proposition in the case of $\Nr{R}$. Obviously, the operation $F_r$ is an additive homomorphism. Thus, we consider the multiplication. Let $x=\vect{x}$ and $y=\vect{y}$ be elements of $\Nr{R}$. Put $\alpha=\vect{\alpha}=x\cdot_{Nr}y$, $u=\vect{u}=F_r(x)$, $v=\vect{v}=F_r(y)$ and $w=\vect{w}=F_r(\alpha)$. Then, have
\begin{eqnarray*}
w_n&=&\sum_{[j,r]=nr}\dfrac{j}{n}\alpha_j\\
&=&\sum_{[p,q]=j, [j,r]=nr}\dfrac{j}{n}(p,q)x_p y_q\\
&=&\sum_{[p,q,r]=nr}\dfrac{1}{n}px_p qy_q\\
&=&\sum_{[i,j]=n}\dfrac{1}{n}\Big(\sum_{[p,r]=ir}px_p\Big)\Big(\sum_{[q,r]=jr}qy_q\Big)\\
&=&\sum_{[i,j]=n}(i,j)\Big(\sum_{[p,r]=ir}\dfrac{p}{i}x_p\Big)\Big(\sum_{[q,r]=jr}\dfrac{q}{j}y_q\Big)\\
&=&\sum_{[i,j]=n}(i,j)u_iv_j.
\end{eqnarray*}
In addition, we have $F_r(1)=1$ by the definition of $F_r$ because if an integer $n\geq 1$ satisfies $[1,r]=nr$, then $n=1$ holds. Hence, the operation $F_r$ is a ring homomorphism in the case of $\Nr{R}$.

Next, we consider the case of $\Gh{R}$. Let $a=\vect{a}$ and $b=\vect{b}$ be elements of $\Gh{R}$. Put $e=\vect{e}=F_r(a)$ and $f=\vect{f}=F_r(b)$. Thus, we have $e_nf_n=a_{nr}b_{nr}$, which is the $n$-th element of $F_r(ab)$. In addition, we have $F_r(1)=1$, and hence, the operation $F_r$ is a ring homomorphism.
\end{proof}

\begin{prop}\label{phiVF}
The map $\phi:\Nr{R}\rightarrow \Gh{R}$ preserves operations $V_r$ and $F_r$.
\end{prop}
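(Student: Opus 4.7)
The plan is to check the two intertwining identities $\phi\circ V_r=V_r\circ\phi$ and $\phi\circ F_r=F_r\circ\phi$ separately, each by a direct manipulation of the defining formula for $\phi$.

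For the Verschiebung, I would fix $x=\vect{x}\in\Nr{R}$ and set $y=V_r(x)$, so that $y_d=x_{d/r}$ if $r\mid d$ and $y_d=0$ otherwise. The $n$-th component of $\phi(y)$ then equals
\[
\sum_{d\mid n}dy_d=\sum_{\substack{d\mid n\\ r\mid d}}dx_{d/r}.
\]
When $r\nmid n$ this sum is empty (since $r\mid d$ and $d\mid n$ force $r\mid n$), giving $0$; when $r\mid n$, substituting $d=re$ with $e\mid n/r$ collapses it to $r\sum_{e\mid n/r}ex_e=r(\phi(x))_{n/r}$. Comparing with the definition of $V_r$ on $\Gh{R}$ closes this case.

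For the Frobenius the heart of the argument is a reindexing of a double sum into a single sum over divisors of $nr$. Writing $z=F_r(x)$ and $a=\phi(x)$, I would expand
\[
(\phi(z))_n=\sum_{d\mid n}dz_d=\sum_{d\mid n}d\sum_{[j,r]=dr}\frac{j}{d}x_j=\sum_{d\mid n}\sum_{[j,r]=dr}jx_j
\]
and then observe that within the inner sum $d$ is forced by $j$ via $d=[j,r]/r=j/(j,r)$. Thus the double sum is really indexed by those $j\geq 1$ with $j/(j,r)\mid n$, each appearing once. A $p$-adic valuation check shows that $j/(j,r)\mid n$ is equivalent to $j\mid nr$ (at every prime $p$ both conditions read $v_p(j)\leq v_p(n)+v_p(r)$), so the sum becomes $\sum_{j\mid nr}jx_j=a_{nr}=(F_r(\phi(x)))_n$, as required.

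The main obstacle is this reindexing step: one has to recognize that the pair of conditions $d\mid n$ and $[j,r]=dr$ packages the single divisibility $j\mid nr$ together with the formula $d=j/(j,r)$. Once this bijection is in place, the rest is routine divisor bookkeeping, and neither \Mobius\ inversion nor any hypothesis on $R$ is required, so the proposition holds over any commutative ring.
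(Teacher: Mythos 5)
Your proof is correct and follows essentially the same route as the paper's: a direct computation of the $n$-th component of $\phi(V_r(x))$ by reindexing over multiples of $r$, and for $F_r$ the same collapse of the double sum $\sum_{d\mid n}\sum_{[j,r]=dr}jx_j$ to $\sum_{j\mid nr}jx_j$ that the paper performs. Your explicit $p$-adic valuation check of the equivalence $j/(j,r)\mid n \iff j\mid nr$ supplies the justification the paper leaves to the reader, and your cleaner handling of the Verschiebung case (the paper's version has some divisibility typos, e.g.\ writing $n\nmid r$ where $r\nmid n$ is meant) is an improvement in presentation, not in method.
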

\begin{proof}
First, we prove $V_r\circ\phi=\phi\circ V_r$ for any integer $r\geq 1$. Let $x=\vect{x}$ be an element of $\Nr{R}$, and let $n\geq 1$ be an integer. We put $y=\vect{y}=V_r(x)$, $a=\vect{a}=\phi(y)$, $b=\vect{b}=\phi(x)$ and $c=\vect{c}=V_r(b)$. If $n\nmid r$, then all divisors of $n$ do not divide $r$. Thus we have $a_n=0$ and $c_n=\sum_{d\mid n}d b_d=0$. If $n\mid r$, then $c_n=\sum_{d\mid n/r}dr x_d=\sum_{d\mid n}dy_d=a_n$ holds. In any case we have $x_n=a_n$ for any integer $n\geq 1$, that is, we have $V_r\circ\phi=\phi\circ V_r$.

Next, we prove $F_r\circ\phi=\phi\circ F_r$. For any integer $n\geq 1$, we have 
\[f_n=\sum_{d\mid n}d\Big(\sum_{[j,r]=dr}\dfrac{j}{d}x_j\Big)=\sum_{[j,r]\mid nr}jx_j=e_n\]
where $e=\vect{e}=F_r\circ\phi(x)$ and $f=\vect{f}=\phi\circ F_r(x)$. Note that $j$ satisfies $[j,r]\mid nr$ if and only if $j\mid nr$, and $e_n$ is the $nr$-th element of $\phi(x)$. Hence, we have $F_r\circ\phi=\phi\circ F_r$.
\end{proof}

In the remainder of \S \ref{PMPrelim}, we suppose that a commutative ring $R$ is a $\mathbb{Q}$-algebra. For any integer $r\geq 1$, we define two maps 
\begin{eqnarray*}
V_r'&:&\Nr{R}\rightarrow\Nr{R},\\
V_r'&:&\Gh{R}\rightarrow \Gh{R}
\end{eqnarray*}
by 
\[ V_r':= \dfrac{1}{r}V_r.\]
The operation $V_r'$ is said to be the $r$-th divided Verschiebung operation, which was defined as operations of aperiodic rings in \cite{VW}. This definition comes from the definition in aperiodic rings.

\begin{prop}\label{prodV1}
The map $\phi$ preserves divided Verschiebung operations.
\end{prop}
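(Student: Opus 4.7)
The plan is to deduce this directly from Proposition \ref{phiVF}, which already shows that $\phi$ commutes with the (undivided) Verschiebung operations $V_r$. Since the divided operation is defined by $V_r' = \tfrac{1}{r} V_r$, the only additional ingredient needed is that $\phi$ commutes with multiplication by the scalar $\tfrac{1}{r} \in \mathbb{Q}$.

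First I would note that because $R$ is a $\mathbb{Q}$-algebra, both $\Nr{R}$ and $\Gh{R}$ are $\mathbb{Q}$-algebras: scalar multiplication by $\tfrac{1}{r}$ acts componentwise on each infinite vector and is well-defined. Since $\phi$ is a ring homomorphism (sending unit to unit) between $\mathbb{Q}$-algebras, it is automatically $\mathbb{Q}$-linear; in particular $\phi(\tfrac{1}{r} x) = \tfrac{1}{r}\phi(x)$ for every $x \in \Nr{R}$.

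Combining these two facts yields the computation
\begin{equation*}
\phi \circ V_r' \;=\; \phi \circ \tfrac{1}{r} V_r \;=\; \tfrac{1}{r}\,(\phi \circ V_r) \;=\; \tfrac{1}{r}\,(V_r \circ \phi) \;=\; V_r' \circ \phi,
\end{equation*}
where the second equality uses $\mathbb{Q}$-linearity of $\phi$ and the third equality is Proposition \ref{phiVF}. This handles both the $\Nr{R} \to \Nr{R}$ case and, by the analogous reasoning in $\Gh{R}$, is the full statement.

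There is no real obstacle here: the proposition is essentially a bookkeeping consequence of the previous one, and the only subtlety is making explicit that the $\mathbb{Q}$-algebra hypothesis on $R$ is exactly what is needed both to define $V_r'$ and to let the scalar $\tfrac{1}{r}$ pass through $\phi$.
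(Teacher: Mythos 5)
Your proposal is correct and matches the paper's own argument: both proofs compute the same chain $\phi\circ V_r'=\phi\circ\tfrac{1}{r}V_r=\tfrac{1}{r}V_r\circ\phi=V_r'\circ\phi$, invoking Proposition \ref{phiVF} for the middle step. You merely spell out why $\tfrac{1}{r}$ passes through $\phi$ (the $\mathbb{Q}$-algebra hypothesis), a point the paper leaves implicit.
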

\begin{proof}
By Proposition \ref{phiVF}, we have $\phi\circ V'_r=\phi\circ\dfrac{1}{r}V_r=\dfrac{1}{r}V_r\circ\phi=V'_r\circ\phi$. Hence, the map $\phi$ preserves divided Verschiebung operations.
\end{proof}

\begin{prop}
For any integer $r\geq 1$, the operation $V'_r$ is a homomorphism of the additive and the multiplication.
\end{prop}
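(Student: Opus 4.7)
The plan is to verify additivity and multiplicativity of $V'_r$ separately on both $\Nr{R}$ and $\Gh{R}$. Additivity is immediate: on $\Gh{R}$ the ring addition is componentwise and $V_r$ acts by $(V_r a)_n = r a_{n/r}$ (with $0$ when $r \nmid n$), so $V_r$ is clearly additive; on $\Nr{R}$ the addition $+_{Nr}$ is again componentwise and $V_r$ is the zero-padded shift. Since $V'_r = \tfrac{1}{r} V_r$ is just a $\mathbb{Q}$-scalar multiple, additivity survives.

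The substantive work is multiplicativity. I would first establish the auxiliary identity
\[
V_r(x) \cdot V_r(y) \;=\; r \cdot V_r(x \cdot y)
\]
in both rings; it captures precisely that $V_r$ fails to be multiplicative by exactly a factor of $r$. In $\Gh{R}$ (componentwise multiplication) this is an immediate index-wise check: both sides vanish when $r \nmid n$, and otherwise both equal $r^2 a_{n/r} b_{n/r}$. In $\Nr{R}$, only terms with $r \mid i$ and $r \mid j$ contribute to $\sum_{[i,j]=n}(i,j)(V_r x)_i (V_r y)_j$; substituting $i = ri'$, $j = rj'$ and using the elementary identities $[ri', rj'] = r[i', j']$ and $(ri', rj') = r(i', j')$ forces $r \mid n$ and collapses the sum, when $n = rn'$, to $r \cdot (x \cdot_{Nr} y)_{n'} = r \cdot V_r(x \cdot_{Nr} y)_n$.

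Once the auxiliary identity is available, multiplicativity of $V'_r$ is a one-line rescaling:
\[
V'_r(x) \cdot V'_r(y) \;=\; \tfrac{1}{r^2}\bigl(V_r(x) \cdot V_r(y)\bigr) \;=\; \tfrac{1}{r^2}\cdot r\, V_r(x\cdot y) \;=\; \tfrac{1}{r}V_r(x\cdot y) \;=\; V'_r(x \cdot y).
\]
As a cleaner alternative for the $\Nr{R}$ case, one can reduce entirely to $\Gh{R}$: since $R$ is a $\mathbb{Q}$-algebra, Proposition \ref{prophi} says $\phi$ is a ring isomorphism, and Proposition \ref{prodV1} says $\phi \circ V'_r = V'_r \circ \phi$, so multiplicativity on $\Gh{R}$ transfers verbatim to $\Nr{R}$ by conjugation by $\phi$.

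The main obstacle is not conceptual but a matter of careful bookkeeping: the two factors of $r$ produced by the two $V_r$'s must be reconciled against the single extra $r$ that emerges from the lcm/gcd rescaling in the $\Nr{R}$ product. The whole \emph{raison d'\^etre} of dividing by $r$ in the definition $V'_r := \tfrac{1}{r}V_r$ is to absorb exactly this residual factor, which is also why the $\mathbb{Q}$-algebra hypothesis is essential here. Note finally that $V'_r$ is not a unital homomorphism (for instance $V'_r(1_{\Nr{R}}) = \tfrac{1}{r} e_r \neq 1_{\Nr{R}}$), which is consistent with the statement asking only for preservation of addition and multiplication.
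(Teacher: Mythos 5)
Your proof is correct, and it diverges from the paper's argument in a minor but interesting way. The paper proves multiplicativity on $\Gh{R}$ by direct calculation working with $V'_r$ itself (using that $(V'_r(a))_n = a_{n/r}$ when $r\mid n$, so the $\tfrac1r$ exactly cancels the $r$ in $V_r$'s definition), and then for $\Nr{R}$ it immediately invokes the isomorphism $\phi$ and Proposition~\ref{prodV1} to transfer the result, exactly as in your ``cleaner alternative.'' Your primary route instead isolates the $V_r$-level identity $V_r(x)\cdot V_r(y)=r\,V_r(x\cdot y)$ in both rings and then rescales by $\tfrac1{r^2}$, with a genuine direct computation on $\Nr{R}$ via the substitutions $i=ri'$, $j=rj'$ and the identities $[ri',rj']=r[i',j']$, $(ri',rj')=r(i',j')$. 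That direct $\Nr{R}$ computation is longer than the paper's $\phi$-transfer, but it is more self-contained (it does not lean on Propositions~\ref{prophi} and~\ref{prodV1}) and it makes transparent exactly where the extra factor of $r$ comes from in the necklace-ring product --- from $\gcd(ri',rj')=r\gcd(i',j')$ --- which is what the normalization $V'_r=\tfrac1r V_r$ is designed to absorb. The paper's route is shorter at the cost of that conceptual visibility. Your closing observation that $V'_r$ is not unital (e.g.\ $V'_r(1_{\Nr{R}})=\tfrac1r e_r$) correctly explains why the proposition is phrased as preserving addition and multiplication rather than as a ring homomorphism.
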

\begin{proof}
In both of cases, it is obvious that the operation $V'_r$ is an additive homomorphism. Hence, we consider the multiplication.

First, we consider the case of $\Gh{R}$. Let $a=\vect{a}$ and $b=\vect{b}$ be elements of $\Gh{R}$. Put $c=\vect{c}=ab$, \\
$e=\vect{e}=V'_r(a)$, $f=\vect{f}=V'_r(b)$ and $g=\vect{g}=V'_r(c)$. If $r\mid n$, then $g_n=c_{n/r}=a_{n/r}b_{n/r}=e_{n}f_n$ holds. If $r\nmid n$, then $g_n=e_n f_n=0$ holds. Hence, the operation $V'_r$ satisfies $V'_r(ab)=V'_r(a)V'_r(b)$.

In case where $\Nr{R}$, we use the map $\phi$ which is bijective by Proposition \ref{prophi}. The operation $V'_r$ is equal to $\phi^{-1}\circ V'_r \circ \phi$ by Proposition \ref{prodV1}. Hence, the operation $V'_r$ satisfies $V'_r(x\cdot_{Nr} y)=V'_r(x)\cdot_{Nr}V'_r(y)$ for any $x,y\in\Nr{R}$.
\end{proof}

The followings also hold by Proposition \ref{proVF}.
\begin{prop}
The operations $V'_r$ and $F_r$ satisfy the following relations.
\begin{itemize}
\item [{\rm (1)}]$V_r'\circ V_s'=V_{rs}'$,
\item [{\rm (2)}]$F_r\circ V_r'=\identitymap{R}$,
\item [{\rm (3)}]$F_r\circ V_s'=V_s'\circ F_r$ {\rm if} $(r,s)=1$,
\item [{\rm (4)}]$F_r\circ V_s'=F_{r/(r,s)}\circ V_{s/(r,s)}'$.
\end{itemize}
\end{prop}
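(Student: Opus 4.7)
The plan is to reduce each identity directly to the corresponding identity for $V_r$ and $F_r$ from Proposition \ref{proVF}, using only the definition $V'_r = \tfrac{1}{r} V_r$ and the fact that $R$ (hence $\Nr{R}$ and $\Gh{R}$, via componentwise scaling) is a $\mathbb{Q}$-algebra so that the scalar factors $1/r$, $1/s$ make sense and commute with every additive operation in sight. In particular, $F_r$ is $\mathbb{Q}$-linear (it is a ring homomorphism by Proposition \ref{main4lemm0}, and it is obviously additive, so it commutes with multiplication by rationals), and $V_s$ is additive, so both commute with the scalar $1/s$.

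For (1), I would compute
\[
V'_r \circ V'_s = \tfrac{1}{r} V_r \circ \tfrac{1}{s} V_s = \tfrac{1}{rs}\, V_r \circ V_s = \tfrac{1}{rs}\, V_{rs} = V'_{rs},
\]
using Proposition \ref{proVF}(1). For (2),
\[
F_r \circ V'_r = \tfrac{1}{r}\, F_r \circ V_r = \tfrac{1}{r}\, (r\,\identitymap{R}) = \identitymap{R},
\]
using Proposition \ref{proVF}(3). For (3), under the assumption $(r,s)=1$,
\[
F_r \circ V'_s = \tfrac{1}{s}\, F_r \circ V_s = \tfrac{1}{s}\, V_s \circ F_r = V'_s \circ F_r,
\]
using Proposition \ref{proVF}(4).

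For (4), the computation is the same flavour but needs Proposition \ref{proVF}(5):
\[
F_r \circ V'_s = \tfrac{1}{s}\, F_r \circ V_s = \tfrac{(r,s)}{s}\, F_{r/(r,s)} \circ V_{s/(r,s)} = F_{r/(r,s)} \circ \tfrac{1}{s/(r,s)}\, V_{s/(r,s)} = F_{r/(r,s)} \circ V'_{s/(r,s)}.
\]
No step here is a serious obstacle; the only thing to take care of is the bookkeeping of the scalar factor $(r,s)/s = 1/(s/(r,s))$ in item (4), and the implicit verification that $F_r$ commutes with multiplication by a rational scalar, which I would note once at the beginning to avoid repetition.
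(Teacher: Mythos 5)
Your proposal is correct and takes exactly the approach the paper intends: the paper omits a written proof and simply says the identities follow from Proposition \ref{proVF}, and your argument makes that reduction explicit by pulling the scalar $1/s$ through $V_s$ and $F_r$ (both $\mathbb{Q}$-linear) and then invoking the corresponding item of Proposition \ref{proVF}. The scalar bookkeeping in item (4), $\tfrac{(r,s)}{s} = \tfrac{1}{s/(r,s)}$, is handled correctly.
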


\subsection{Pre-$\lambda$-rings and $\lambda$-rings}\label{PreLambda}
We discuss pre-$\lambda$-rings and $\lambda$-rings. Main references of \S \ref{PreLambda} are \cite[Chapter 13]{Hus}, \cite{Knu} and \cite{Yau}.

A $\lambda$-semiring is a commutative semiring $R$ with operations $\lambda^n:R\rightarrow R$, $n=0,1,2,\ldots$, such that
\begin{itemize}
\item [{\rm (1)}]$\lambda^0(r)=1$,
\item [{\rm (2)}]$\lambda^1(r)=r $,
\item [{\rm (3)}]$\lambda^n(r+s)=\sum_{i+j=n}\lambda^i(r)\lambda^j(s)$
\end{itemize}
for any $r,s\in R$. A semi-$\lambda$-ring $R$ is said to be pre-$\lambda$-ring if $R$ has a commutative ring structure with the addition and the multiplication of $R$ as a semiring.

We define a $\lambda$-semiring homomorphism $f:R\rightarrow R'$ between two $\lambda$-semirings $R$ and $R'$ by a semiring homomorphism satisfying $f\circ\lambda^i=\lambda^i\circ f$ for any integer $i\geq 0$.

\begin{prop}[\text {\cite [p.172]{Hus}}]\label{RingComp}
Let $(R^*,\theta)$ be the ring completion of the semiring $R$. There exists a pre-$\lambda$-ring structure on $R^*$ uniquely such that the map $\theta$ is a $\lambda$-semiring homomorphism.
\end{prop}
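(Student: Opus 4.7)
The plan is to reformulate the $\lambda$-structure in terms of generating functions and to exploit the universal property of the ring completion $\theta:R\to R^*$. For $r\in R$, consider
\[ \lambda_t(r):=\sum_{i\geq 0}\theta(\lambda^i(r))t^i \in 1+tR^*[[t]]. \]
Axiom (3) translates precisely into the statement that $\lambda_t:(R,+)\to(1+tR^*[[t]],\cdot)$ is a monoid homomorphism, where the target is an abelian group since every power series with constant term $1$ is invertible in $R^*[[t]]$.

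Next I would invoke the universal property of the ring completion: since $(R^*,+)$ is the group completion of the commutative monoid $(R,+)$ and the target $G:=1+tR^*[[t]]$ is an abelian group, there is a unique group homomorphism $\Lambda_t:(R^*,+)\to G$ with $\Lambda_t\circ\theta=\lambda_t$. Concretely $\Lambda_t(\theta(a)-\theta(b))=\lambda_t(a)/\lambda_t(b)$ in $G$. Define the candidate operations $\tilde\lambda^i:R^*\to R^*$ by letting $\tilde\lambda^i(x)$ be the coefficient of $t^i$ in $\Lambda_t(x)$.

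To verify the axioms, $\tilde\lambda^0(x)=1$ holds because $\Lambda_t(x)\in G$ has constant term $1$. Axiom (3) for $\{\tilde\lambda^i\}$ is just the multiplicativity of $\Lambda_t$ read off coefficient-wise via the Cauchy product. For $\tilde\lambda^1=\identitymap{R^*}$, note that the $t$-coefficient of a product $f(t)g(t)$ of elements of $G$ is the sum of the $t$-coefficients of $f$ and $g$, so $\tilde\lambda^1$ is additive; on $\theta(R)$ it agrees with $\theta$ by the definition of $\lambda_t$, and since every element of $R^*$ is of the form $\theta(a)-\theta(b)$ this forces $\tilde\lambda^1=\identitymap{R^*}$. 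The commutative ring structure on $R^*$ is given by the ring completion, so $R^*$ becomes a pre-$\lambda$-ring, and $\theta$ is a $\lambda$-semiring homomorphism by construction, since $\tilde\lambda^i(\theta(r))=\theta(\lambda^i(r))$.

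For uniqueness, any pre-$\lambda$-ring structure $\{\lambda'^i\}$ on $R^*$ for which $\theta$ is a $\lambda$-semiring homomorphism satisfies $\lambda'^i(\theta(r))=\theta(\lambda^i(r))$, and its generating function $\Lambda_t':(R^*,+)\to G$ is a group homomorphism by axiom (3) applied in $R^*$. Since $\Lambda_t'$ agrees with $\Lambda_t$ on the image $\theta(R)$, the universal property forces $\Lambda_t'=\Lambda_t$ on all of $R^*$, and extracting coefficients gives $\lambda'^i=\tilde\lambda^i$. The main obstacle is to organise the generating-function bookkeeping cleanly and to identify the correct target category; once the universal property of $R^*$ is invoked with $(1+tR^*[[t]],\cdot)$ as target, existence and uniqueness drop out together, and the three axioms for $\{\tilde\lambda^i\}$ reduce to structural features of $G$.
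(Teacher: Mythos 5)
The paper does not actually prove this proposition; it cites it from Husemoller \cite[p.172]{Hus}, so there is no in-text argument to compare against. Your proof is correct and is the standard one: push the generating function $\lambda_t(r)=\sum_{i\ge 0}\theta(\lambda^i(r))t^i$ into the abelian group $1+tR^*[[t]]$, observe that axiom (3) makes $\lambda_t$ a homomorphism of commutative monoids (the identity is preserved automatically, since $\lambda_t(0)$ is idempotent in a group and hence equals $1$ --- a small step worth making explicit, as in the bare semiring $R$ one cannot cancel to conclude $\lambda^n(0)=0$ for $n\ge 2$), and invoke the universal property of the additive group completion $(R^*,+)$ of $(R,+)$ to obtain and to uniquely characterize the extension $\Lambda_t$. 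Reading off coefficients, the three pre-$\lambda$-ring axioms and the identity $\tilde\lambda^i\circ\theta=\theta\circ\lambda^i$ follow as you describe, and the uniqueness clause is exactly the uniqueness in the universal property applied to $\Lambda_t'$; the whole argument is sound.
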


Let $R$ be a commutative ring. Next, we define $\Lambda(R)$ by
\[ \Lambda(R)=\left\{f=1+\sum_{i=1}^{\infty}r_it^i\in R[[t]] \mid r_i\in R\right\} \]
where $t$ is an indeterminate variable. 
In \cite{Knu} and \cite{Yau}, the set $\Lambda(R)$ is called universal $\lambda$-ring, and has a pre-$\lambda$-ring structure whose addition is defined by the multiplication of $R[[t]]$. For the multiplication and $\lambda$-operations of $\Lambda(R)$, see \cite{Knu} and \cite{Yau}.

%
%
%
%
Let $R$ be a pre-$\lambda$-ring. For any $r\in R$, we define $\lambda_t(r)\in\Lambda(R)$ by
\[ \lambda_t(r)=\sum_{i=0}^{\infty}\lambda^i(r)t^i.\]
By the definition of pre-$\lambda$-rings, we have $\lambda_t(r+s)=\lambda_t(r)\lambda_t(s)$ for any $r,s\in R$, which shows that the map $\lambda_t:R\rightarrow\Lambda(R)$ is an additive homomorphism.

We call a $\lambda$-semiring homomorphism between two pre-$\lambda$-rings a \\ \textrm{pre-$\lambda$-homomorphism}.

A pre-$\lambda$-ring $R$ is said to be a $\lambda$-ring if the map $\lambda_t:R\rightarrow\Lambda(R)$ is a pre-$\lambda$-homomorphism. Knutson \cite{Knu} showed that $\Lambda(R)$ is a $\lambda$-ring for any commutative ring $R$. A pre-$\lambda$-homomorphism between two $\lambda$-rings is called $\lambda$-homomorphism. 

A $\lambda$-subring of a $\lambda$-ring $R$ is defined by a subring $R'\subset R$ such that $\lambda^i(x)\in R'$ for all integers $i\geq 0$ and $x\in R'$.

For a $\lambda$-homomorphism and a $\lambda$-subring, the following proposition holds.

\begin{prop}[\text {\cite[Proposition 1.27 (2)]{Yau}}]\label{subring}
Let $R$ and $S$ be $\lambda$-rings and let $f:R\rightarrow S$ be a $\lambda$-homomorphism. The image of $f$ is a $\lambda$-subring of $S$.
\end{prop}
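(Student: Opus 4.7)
The plan is to verify directly the two conditions in the definition of a $\lambda$-subring of $S$: that $f(R)$ is a subring of $S$, and that it is stable under every $\lambda^i$. Both will follow mechanically from the hypothesis that $f$ is a $\lambda$-homomorphism, so the proof will be a short chain of definitions rather than a substantial argument.

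For the first condition, I would simply observe that a $\lambda$-homomorphism between $\lambda$-rings is in particular a (unital) ring homomorphism, so $f(R)$ is automatically a subring of $S$ containing the unit $1_S = f(1_R)$ and closed under the addition, subtraction, and multiplication inherited from $S$.

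For the second condition, I would fix an integer $i \geq 0$ and an element $y \in f(R)$, pick a preimage $x \in R$ with $y = f(x)$, and then apply the defining property of a pre-$\lambda$-homomorphism, namely $\lambda^i \circ f = f \circ \lambda^i$, to obtain
\[
\lambda^i(y) = \lambda^i(f(x)) = f(\lambda^i(x)) \in f(R).
\]
This exhibits $f(R)$ as closed under each $\lambda$-operation of $S$, completing the verification.

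The main ``obstacle'' is really only to notice that no nontrivial content is needed: the statement is a purely formal consequence of the definitions recalled just before Proposition \ref{subring}. In particular the argument never invokes the stronger axiom distinguishing $\lambda$-rings from pre-$\lambda$-rings (that $\lambda_t$ itself is a pre-$\lambda$-homomorphism); the commutation $f\circ \lambda^i = \lambda^i \circ f$ built into the notion of pre-$\lambda$-homomorphism already suffices. Consequently the same proof shows, more generally, that the image of any pre-$\lambda$-homomorphism between pre-$\lambda$-rings is a pre-$\lambda$-subring, of which Proposition \ref{subring} is just the special case where source and target happen to be $\lambda$-rings.
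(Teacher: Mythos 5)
Your proof is correct and complete, and it is the standard argument. The paper itself does not prove Proposition~\ref{subring} but simply cites \cite[Proposition 1.27 (2)]{Yau}, so there is no in-paper proof to compare against; what you wrote is the natural verification one would give from the paper's definition of a $\lambda$-subring (a subring closed under all $\lambda^i$). Your closing remark is also accurate: with that definition, only the pre-$\lambda$-homomorphism property $f\circ\lambda^i=\lambda^i\circ f$ is used, so the statement does hold more generally for images of pre-$\lambda$-homomorphisms between pre-$\lambda$-rings.
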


Let $R$ be a $\lambda$-ring. For each integer $n\geq 1$, we define the $n$-th Adams operation $\psi^n:R\rightarrow R$ by the following equation:
\[ \sum_{n=1}^{\infty}\psi^n(r)(-t)^n:=-t\dfrac{d}{dt}\log\lambda_t(r)\quad (\text{for any}\ r\in R).\]
For Adams operations, the following propositions and theorem hold.
\begin{prop}\label{Adams1}
Let $R$ and $S$ be $\lambda$-rings and let $f:R\rightarrow S$ be a ring homomorphism. If $f$ is a $\lambda$-homomorphism, then $f\circ\psi^n=\psi^n\circ f$ holds for any integer $n\geq 1$. 
\end{prop}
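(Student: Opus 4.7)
The plan is to push the defining identity of the Adams operations through $f$ coefficient-by-coefficient and compare. The basic observation is that the generating function identity
\[ \sum_{n=1}^{\infty}\psi^n(r)(-t)^n = -t\dfrac{d}{dt}\log\lambda_t(r) \]
should be read as shorthand for $-t\,\lambda_t(r)'/\lambda_t(r)$, where the prime denotes the formal derivative in $R[[t]]$ and the quotient makes sense because $\lambda_t(r)\in 1+tR[[t]]$ is a unit in $R[[t]]$. This avoids needing a formal $\log$ and confines the whole computation to ring-theoretic operations on $R[[t]]$.

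First I would extend $f:R\to S$ coefficient-wise to a ring homomorphism $\tilde f:R[[t]]\to S[[t]]$ fixing $t$. Because $\tilde f$ is a ring homomorphism fixing $t$, it commutes with the formal derivative $\tfrac{d}{dt}$ and with multiplicative inversion (so in particular it sends units of $R[[t]]$ to units of $S[[t]]$). Hence, for any $g\in 1+tR[[t]]$,
\[ \tilde f\!\left(-t\,\tfrac{d}{dt}\log g\right) \;=\; \tilde f\!\left(-t\,\tfrac{g'}{g}\right) \;=\; -t\,\tfrac{\tilde f(g)'}{\tilde f(g)} \;=\; -t\,\tfrac{d}{dt}\log \tilde f(g). \]

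Next I would use the $\lambda$-homomorphism hypothesis. Since $f\circ\lambda^i=\lambda^i\circ f$ for every $i\geq 0$, the definition of $\lambda_t$ gives
\[ \tilde f(\lambda_t(r))=\sum_{i=0}^{\infty} f(\lambda^i(r))\,t^i = \sum_{i=0}^{\infty}\lambda^i(f(r))\,t^i = \lambda_t(f(r)). \]
Combining this with the previous display applied to $g=\lambda_t(r)$, and applying $\tilde f$ to the defining identity of $\psi^n$ on $R$, I obtain
\[ \sum_{n=1}^{\infty} f(\psi^n(r))(-t)^n \;=\; -t\,\tfrac{d}{dt}\log \lambda_t(f(r)) \;=\; \sum_{n=1}^{\infty}\psi^n(f(r))(-t)^n. \]
Comparing coefficients of $(-t)^n$ yields $f(\psi^n(r))=\psi^n(f(r))$ for every $n\geq 1$ and every $r\in R$, which is the claim.

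There is no real obstacle here; the only thing requiring a moment of care is the reinterpretation of $-t\,\tfrac{d}{dt}\log(\cdot)$ as $-t\cdot(\cdot)'/(\cdot)$ so that the manipulations stay inside $R[[t]]$ (needed because $R$ is not assumed to contain $\mathbb{Q}$, so a literal formal logarithm is unavailable). Everything else is routine use of the fact that a coefficient-wise extension of a ring homomorphism respects multiplication, inversion of units, and formal differentiation.
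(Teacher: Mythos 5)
Your proof is correct. The paper's own proof is more compressed: it observes that, from the defining generating-function identity, each $\psi^n(r)$ can be written as a universal polynomial (with integer coefficients) in $\lambda^0(r),\ldots,\lambda^n(r)$ (these are the Newton-type identities), and then concludes at once that a map commuting with all $\lambda^i$ commutes with $\psi^n$. You avoid invoking that polynomial fact and instead push the coefficient-wise extension $\tilde f$ of $f$ directly through the generating function $-t\,\lambda_t(r)'/\lambda_t(r)$, using that $\tilde f$ commutes with the formal derivative and with inversion of units, together with $\tilde f(\lambda_t(r))=\lambda_t(f(r))$. Both routes are sound; the paper's is shorter but assumes the reader knows the Newton-polynomial structure of Adams operations, while yours is more self-contained and stays entirely at the level of formal power series manipulations, and your remark about reading $-t\frac{d}{dt}\log$ as $-t(\cdot)'/(\cdot)$ correctly addresses the fact that $R$ need not contain $\mathbb{Q}$.
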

\begin{proof}
Assume that $f$ is a $\lambda$-homomorphism. By the definition of Adams operations, for any integer $n\geq 1$ and $r\in R$, an element $\psi^n(r)$ is a polynomial of $\lambda^i(r)$, $i=0,1,\ldots, n$. Hence $f\circ \psi^n=\psi^n\circ f$ holds.
\end{proof}

\begin{prop}[\text {\cite[Corollary 3.16]{Yau}}]
Let $R$ and $S$ be $\lambda$-rings and let $f:R\rightarrow S$ be a ring homomorphism. The map $f$ is a $\lambda$-homomorphism if $f\circ\psi^n=\psi^n\circ f$ holds for any integer $ n\geq 1$ and $S$ is $\mathbb{Z}$-torsion free.
\end{prop}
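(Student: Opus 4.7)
The plan is to use the Newton-style identity that expresses each $\lambda^n(r)$ in terms of Adams operations $\psi^1(r),\ldots,\psi^n(r)$ and the lower $\lambda$-operations $\lambda^1(r),\ldots,\lambda^{n-1}(r)$, multiplied by $n$. Specifically, by differentiating $\log\lambda_t(r)$ (the defining relation of the $\psi^n$), one obtains in any $\lambda$-ring the recursion
\[
n\lambda^n(r)=\sum_{i=1}^{n}(-1)^{i-1}\psi^i(r)\lambda^{n-i}(r),
\]
valid for every $n\geq 1$. With this identity in hand, and the hypotheses that $f$ is a ring homomorphism commuting with every $\psi^n$, the goal $f(\lambda^n(r))=\lambda^n(f(r))$ will follow by induction on $n$, provided we can cancel the leading factor $n$, which is exactly where the $\mathbb{Z}$-torsion freeness of $S$ is used.

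First I would fix $r\in R$ and argue by induction on $n\geq 0$. The base cases $n=0,1$ are immediate from $\lambda^0=1$ and $\lambda^1=\mathrm{id}$ together with $f(1)=1$. For the inductive step, assume $f(\lambda^k(r))=\lambda^k(f(r))$ for every $k<n$. Apply $f$ to the Newton recursion above; using that $f$ is a ring homomorphism, the hypothesis $f\circ\psi^i=\psi^i\circ f$, and the inductive hypothesis applied to $\lambda^{n-i}$ for $1\leq i\leq n$ (noting that when $i=n$ we have $\lambda^0=1$), we get
\[
n\,f(\lambda^n(r))=\sum_{i=1}^{n}(-1)^{i-1}\psi^i(f(r))\lambda^{n-i}(f(r))=n\,\lambda^n(f(r)),
\]
where the second equality is the same Newton recursion applied in $S$.

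Hence $n\bigl(f(\lambda^n(r))-\lambda^n(f(r))\bigr)=0$ in $S$, and since $S$ is $\mathbb{Z}$-torsion free we conclude $f(\lambda^n(r))=\lambda^n(f(r))$, completing the induction. Together with the fact that $f$ is already a ring homomorphism (hence a semiring homomorphism compatible with the structure underlying the $\lambda$-semiring definition), this shows that $f$ is a $\lambda$-homomorphism.

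The only real subtlety is justifying the Newton recursion inside the proof: one must verify that the identity derived formally from $\sum\psi^n(r)(-t)^n=-t(d/dt)\log\lambda_t(r)$ actually holds at the level of the coefficients $\lambda^n(r)\in R$. This is a routine manipulation of the formal power series equation $\lambda_t(r)\cdot\bigl(\sum_{n\geq 1}\psi^n(r)(-t)^{n-1}\bigr)=-\frac{d}{dt}\lambda_t(r)$ and comparison of coefficients of $t^{n-1}$, so it is not a genuine obstacle; the main conceptual point of the proof is simply that $\mathbb{Z}$-torsion freeness lets us invert the $n$ that appears in the leading term of the recursion.
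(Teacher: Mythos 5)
Your proof is correct, and it follows the standard route (and, as far as I can tell, essentially the route in Yau's cited Corollary 3.16): derive the Newton recursion $n\lambda^n(r)=\sum_{i=1}^{n}(-1)^{i-1}\psi^i(r)\lambda^{n-i}(r)$ by comparing coefficients of $t^n$ in $\lambda_t(r)\cdot\sum_{i\geq 1}\psi^i(r)(-t)^i=-t\,\lambda_t'(r)$, then induct on $n$ and use $\mathbb{Z}$-torsion-freeness of $S$ to cancel the factor $n$. The paper itself merely cites this proposition without reproducing its proof, so there is nothing in the text to diverge from; your argument fills it in correctly, including the observation that a ring homomorphism already supplies the semiring-homomorphism part of the definition of $\lambda$-homomorphism used in the paper.
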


\begin{theo}[\text {\cite[Theorem 3.6]{Yau}}]\label{Adams2}
On any $\lambda$-ring $R$ and integer $n\geq 1$, the $n$-th Adams operation $\psi^n$ is a $\lambda$-homomorphism. 
\end{theo}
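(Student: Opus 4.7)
The plan is to invoke the \emph{splitting principle} for $\lambda$-rings to reduce both required identities to trivial polynomial manipulations on sums of line elements. Recall that a line element is an $\ell$ with $\lambda_t(\ell) = 1 + \ell t$. The splitting principle asserts that for each $r \in R$ there is a $\lambda$-ring extension $R \subseteq S$ in which $r$ is a sum of line elements, and since both $\psi^n$ and the $\lambda^i$ are given by universal polynomial formulas in the $\lambda^j$, any identity that holds in $S$ descends to $R$.

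First I would verify the result on a single line element $\ell$: from the defining relation
\[
\sum_{n \geq 1} \psi^n(\ell)(-t)^n = -t\frac{d}{dt}\log(1 + \ell t) = \sum_{n \geq 1} \ell^n (-t)^n,
\]
so $\psi^n(\ell) = \ell^n$. Additivity of $\psi^n$ on all of $R$ is immediate from $\lambda_t(r+s) = \lambda_t(r)\lambda_t(s)$ by taking the logarithmic derivative and needs no splitting. For multiplicativity, one first notes that in a pre-$\lambda$-ring the product of two line elements is again a line element, which is visible from the formula for multiplication in $\Lambda(R)$. Then if $r = \sum_i \ell_i$ and $s = \sum_j m_j$ in a common splitting extension, both $\psi^n(rs)$ and $\psi^n(r)\psi^n(s)$ reduce to $\sum_{i,j}(\ell_i m_j)^n$, proving that $\psi^n$ is a ring homomorphism.

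For the compatibility $\psi^n \circ \lambda^k = \lambda^k \circ \psi^n$, the axioms of a $\lambda$-ring applied to $r = \ell_1 + \cdots + \ell_p$ in the splitting extension give $\lambda_t(r) = \prod_i(1 + \ell_i t)$, so $\lambda^k(r) = e_k(\ell_1, \ldots, \ell_p)$ is the $k$-th elementary symmetric polynomial. Applying the ring homomorphism $\psi^n$ just established then gives
\[
\psi^n(\lambda^k(r)) = e_k(\ell_1^n, \ldots, \ell_p^n) = \lambda^k(\ell_1^n + \cdots + \ell_p^n) = \lambda^k(\psi^n(r)),
\]
which is the desired compatibility. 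Combining this with the ring-homomorphism property shows that $\psi^n$ is a $\lambda$-homomorphism.

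The main obstacle is setting up the splitting principle itself, since the paper as written provides no such extension. Given $r$ with $\lambda_t(r) = 1 + r_1 t + \cdots + r_d t^d$, one must construct a $\lambda$-ring $S \supseteq R$ containing elements $\ell_1, \ldots, \ell_d$ that behave as formal roots of this polynomial and are genuine line elements of $S$. The standard route is iterative adjunction of one root at a time via a quotient of a polynomial $\lambda$-ring, and the substantive work is verifying that the resulting quotient is again a $\lambda$-ring with $R$ embedding into it. I would follow the construction given in Yau~\cite{Yau} and simply cite it for this step, since redoing the construction would add pages of material tangential to the paper's main thrust.
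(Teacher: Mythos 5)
The paper does not prove this theorem --- it is quoted directly as \cite[Theorem 3.6]{Yau} --- so there is no internal argument to compare against. Your plan (reduce to line elements and compute with elementary symmetric polynomials) is the standard route to Yau's result, and your individual steps are correct: the computation $\psi^n(\ell)=\ell^n$ from the logarithmic derivative, additivity from $\lambda_t(r+s)=\lambda_t(r)\lambda_t(s)$, the observation that the product of two line elements is again a line element (which indeed uses that $\lambda_t:R\to\Lambda(R)$ is a ring map, i.e.\ the $\lambda$-ring axiom), and the reduction of multiplicativity and $\lambda^k$-compatibility to symmetric-function identities.

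The one genuine gap is in the form of the splitting principle you invoke. You assert that for each $r\in R$ there is a $\lambda$-ring extension $R\subseteq S$ where $r$ is a \emph{finite} sum of line elements, and in setting up the construction you write $\lambda_t(r)=1+r_1t+\cdots+r_dt^d$, i.e.\ you assume $\lambda^i(r)=0$ for $i>d$. That is a finite-dimensionality hypothesis, and it is not a general feature of $\lambda$-ring elements. Moreover no extension can remove the obstruction: since an injective $\lambda$-ring map $R\hookrightarrow S$ commutes with $\lambda_t$, an element whose $\lambda_t(r)$ is a genuine power series in $R$ still has a genuine power series $\lambda_t(r)$ in $S$, so it can never become a finite sum of line elements. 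The fix --- which is what Yau and Knutson actually use --- is the verification principle: each of the identities you need, for fixed $n$ and $k$, is a universal integer polynomial identity in finitely many of the $\lambda^j(r)$ and $\lambda^j(s)$, so it suffices to verify it in a polynomial ring $\mathbb{Z}[x_1,\ldots,x_N,y_1,\ldots,y_N]$ with the generators declared line elements and $N$ taken large enough that the relevant degrees are unaffected; the identity then holds in every $\lambda$-ring because the polynomials defining $\psi^n$ and $\lambda^k$ are the same in every $\lambda$-ring. Your citation to Yau does cover the right machinery, but the statement of the splitting principle as you give it would not, by itself, prove the theorem for an arbitrary $\lambda$-ring.
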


Let $R$ be a commutative ring. We define a map $z:\Lambda(R)\rightarrow \Gh{R}$ by $z(f):=\vect{a}$ where 
\[ \sum_{i=1}^{\infty} a_i(-t)^i=-t\dfrac{d}{dt}\log(f(t))\quad (\text{for any}\ f\in\Lambda(R)).\]
The map $z$ is a ring homomorphism. If $R$ is $\mathbb{Z}$-torsion free, then the map $z$ is injective (\cite[p.133]{Yau}).

A commutative ring $R$ equipped with ring homomorphisms $\psi^n$ for $n\geq 1$, such that $\psi^1=\identitymap{R}$ and $\psi^n\circ\psi^m=\psi^{nm}$ hold for any integers $n,m\geq 1$, is said to be a $\psi$-ring. 

For example, the set $\Gh{R}$ is a $\psi$-ring with Frobenius operations. All $\lambda$-rings are $\psi$-rings with Adams operations. Conversely, the following theorem holds.
\begin{theo}[\text {\cite[p.50]{Knu}}]\label{LambdaGh}
Suppose that $R$ is a $\mathbb{Q}$-algebra and a $\psi$-ring with operations $\psi^n, n\geq 1$. Then, there exists the $\lambda$-ring structure on $R$ uniquely such that $\psi^n$ is an $n$-th Adams operation.
\end{theo}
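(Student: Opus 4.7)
The plan is: first, define the candidate $\lambda$-operations via an exponential formula, which in a $\mathbb{Q}$-algebra is the inverse of Newton's identities; second, verify that this gives a pre-$\lambda$-ring with Adams operations equal to the given $\psi^n$; third, promote to a full $\lambda$-ring structure using the injective ring homomorphism $z:\Lam{R}\to\Gh{R}$; fourth, read off uniqueness from Newton.

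For existence, I would set
\[ \lambda_t(r):=\exp\!\left(\sum_{n=1}^{\infty}\frac{(-1)^{n-1}}{n}\psi^n(r)\,t^n\right)\in 1+tR[[t]]=\Lam{R}, \]
which is well-defined because $R$ is a $\mathbb{Q}$-algebra, and declare $\lambda^n(r)$ to be the coefficient of $t^n$. Then $\lambda^0(r)=1$ is immediate from $\exp(0)=1$, and $\lambda^1(r)=\psi^1(r)=r$ follows from the first-order expansion. Taking $-t\frac{d}{dt}\log$ of the defining equation returns $\sum_{n\geq 1}\psi^n(r)(-t)^n$, so by definition the Adams operations of any pre-$\lambda$-structure extracted from this $\lambda_t$ are precisely the given $\psi^n$. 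Finally, the addition axiom $\lambda^n(r+s)=\sum_{i+j=n}\lambda^i(r)\lambda^j(s)$ reduces to $\lambda_t(r+s)=\lambda_t(r)\lambda_t(s)$ in $R[[t]]$, which follows from the additivity of each $\psi^n$ combined with $\exp(x+y)=\exp(x)\exp(y)$. Hence $R$ is a pre-$\lambda$-ring realizing the given $\psi^n$ as its Adams operations.

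To promote to a $\lambda$-ring, I must show $\lambda_t:R\to\Lam{R}$ is a pre-$\lambda$-homomorphism. I would observe the factorisation $\lambda_t=z^{-1}\circ e$, where $e(r):=(\psi^1(r),\psi^2(r),\ldots)\in\Gh{R}$; here $e$ is a ring homomorphism because each $\psi^n$ is, and $z$ is a bijection on a $\mathbb{Q}$-algebra (its inverse being the same exponential construction applied componentwise). Thus $\lambda_t$ is automatically a ring homomorphism, giving $\lambda_t(rs)=\lambda_t(r)\cdot_{\Lambda}\lambda_t(s)$ at once. For compatibility with $\lambda$-operations, note that $F_n\circ e=e\circ\psi^n$ is immediate from $\psi^n\circ\psi^k=\psi^{nk}$, and the standard intertwining of the Adams operations on $\Lam{R}$ with the Frobenius operations on $\Gh{R}$ via $z$ then shows $\lambda_t$ commutes with all Adams operations. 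Since $R$ and $\Lam{R}$ are $\mathbb{Z}$-torsion-free, Newton's identities express every $\lambda^n$ as a $\mathbb{Q}$-polynomial in $\psi^1,\ldots,\psi^n$ on both sides, so commutation with Adams upgrades to commutation with every $\lambda^n$.

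Uniqueness follows from the same Newton recursion $n\lambda^n(r)=\sum_{i=1}^{n}(-1)^{i-1}\lambda^{n-i}(r)\psi^i(r)$: any $\lambda$-structure on $R$ with the given Adams operations must satisfy it, and the recursion has a unique solution in the $\mathbb{Z}$-torsion-free ring $R$. The principal technical obstacle is the compatibility lemma that $z$ intertwines Adams on $\Lam{R}$ with Frobenius on $\Gh{R}$; this is a classical property of the universal $\lambda$-ring but is not explicitly recorded in the excerpt, and it is what powers the two crucial intertwining steps above.
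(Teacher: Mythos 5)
The paper records this theorem as a bare citation to Knutson p.\,50 and does not supply its own proof, so there is no internal argument to compare against; I therefore evaluate your proof on its own terms.

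Your argument is correct. Setting $\lambda_t(r)=\exp\bigl(\sum_{n\geq 1}\tfrac{(-1)^{n-1}}{n}\psi^n(r)t^n\bigr)$ is exactly the inversion of the defining relation $\sum_n\psi^n(r)(-t)^n=-t\frac{d}{dt}\log\lambda_t(r)$, which is available precisely because $R$ is a $\mathbb{Q}$-algebra; the pre-$\lambda$ axioms and the identification of the resulting Adams operations with the given $\psi^n$ then follow by direct computation as you say. The ghost-map factorisation $\lambda_t=z^{-1}\circ e$ with $e(r)=(\psi^n(r))_{n\geq 1}$ is a clean way to see that $\lambda_t$ is a ring homomorphism, since $e$ is a ring homomorphism into $\Gh{R}$ (each $\psi^n$ is one and the operations on $\Gh{R}$ are componentwise) and $z$ is a ring isomorphism over a $\mathbb{Q}$-algebra. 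The Newton induction upgrading commutation with $\psi^n$ to commutation with $\lambda^n$ is sound: by $n\lambda^n(r)=\sum_{i=1}^n(-1)^{i-1}\lambda^{n-i}(r)\psi^i(r)$, the ring-homomorphism property of $\lambda_t$, the inductive hypothesis, and $\mathbb{Z}$-torsion-freeness of $\Lam{R}$ (which holds because $\Lam{R}\cong\Gh{R}$ is a $\mathbb{Q}$-algebra), one gets $\lambda_t\circ\lambda^n=\lambda^n\circ\lambda_t$. Uniqueness from the same Newton recursion is correct.

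You correctly identify the one input not recorded in the paper: that $z:\Lam{R}\to\Gh{R}$ intertwines the Adams operations of the universal $\lambda$-ring with the shift operators $F_n$ on $\Gh{R}$. This is a genuine classical fact (essentially a restatement of how Adams operations act on ghost components), and nothing in the paper's \S 2 states it; flagging it is appropriate. A small alternative worth noting: one can bypass the $\psi$-to-$\lambda$ upgrade by instead citing Knutson's or Yau's Adams-operation criterion for $\lambda$-rings directly, but within the paper's recorded toolkit (Proposition~\ref{Adams1} and \cite[Corollary 3.16]{Yau} both presuppose the domain is already a $\lambda$-ring) your Newton induction is the honest route. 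Overall the proposal is a valid, self-contained proof and matches what Knutson's cited argument accomplishes.
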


Next, we state the definition of binomial rings. It is a $\mathbb{Z}$-torsion free $\lambda$-ring such that all Adams operations are identity maps. 
There exists the following relation between the universal $\lambda$-ring $\Lambda(R)$ and the necklace ring $\Nr{R}$.
\begin{lemm}{\rm\cite[Lemma 5.41]{Yau}}\label{ENRdefine}
Suppose that $R$ is a binomial ring. For each $f\in\Lambda(R)$, we can write uniquely as
\[ f(t)=\prod_{i=1}^{\infty}(1-(-t)^i)^{a_i} \]
for some elements $a_1,a_2,a_3,\ldots\in R$.
\end{lemm}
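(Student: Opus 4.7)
The plan is to recover $(a_i)_{i\geq 1}$ inductively by comparing coefficients of $t^n$ on both sides of $f(t)=\prod_{i\geq 1}(1-(-t)^i)^{a_i}$. Before starting, I would check that each factor is a well-defined element of $\Lambda(R)$. Since $R$ is a $\mathbb{Z}$-torsion free $\lambda$-ring whose Adams operations are all the identity, the Newton identities relating $\psi^k$ to $\lambda^1,\ldots,\lambda^k$ force $\lambda^k(r)=\binom{r}{k}\in R$ for every $r\in R$ and $k\geq 0$. Hence the formal binomial expansion
\[ (1-(-t)^i)^{a_i}:=\sum_{k=0}^{\infty}\binom{a_i}{k}(-1)^{k(i+1)}t^{ik} \]
is a well-defined element of $\Lambda(R)$, whose first nontrivial term is $(-1)^{i+1}a_i\,t^i$. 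The infinite product then converges in $R[[t]]$ because only factors with $i\leq n$ can affect the coefficient of $t^n$.

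Writing $f(t)=1+\sum_{n\geq 1}c_n t^n$ with $c_n\in R$, I would construct the $a_n$ by induction on $n$. For $n=1$, uniquely $a_1=c_1\in R$. For the inductive step, assume $a_1,\ldots,a_{n-1}\in R$ have already been uniquely determined so that the partial product $\prod_{i=1}^{n-1}(1-(-t)^i)^{a_i}$ agrees with $f(t)$ modulo $t^n$. Multiplying this partial product by $(1-(-t)^n)^{a_n}$ and comparing the coefficient of $t^n$ with $c_n$ gives an equation of the form
\[ c_n=P_n(a_1,\ldots,a_{n-1})+(-1)^{n+1}a_n, \]
where $P_n$ is a universal integer polynomial in the binomial coefficients $\binom{a_i}{k}$ with $i<n$, all of which lie in $R$ by the preceding paragraph. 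Solving yields $a_n=(-1)^{n+1}\bigl(c_n-P_n(a_1,\ldots,a_{n-1})\bigr)\in R$ uniquely. Since factors $(1-(-t)^j)^{a_j}$ with $j>n$ contribute only to coefficients of degree $\geq j>n$, this $a_n$ is also the unique value forced by the full infinite product identity.

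The only substantive point, and the sole place the binomial ring hypothesis is used, is the verification that $\binom{a}{k}\in R$ for every $a\in R$ and $k\geq 0$, so that each factor $(1-(-t)^i)^{a_i}$ lies in $\Lambda(R)$ in the first place. Once that is available, the remaining argument is a routine coefficient-by-coefficient matching in $R[[t]]$, and no further obstacle arises.
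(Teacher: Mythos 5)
Your proof is correct. The paper states this lemma as a direct citation of \cite[Lemma 5.41]{Yau} and gives no proof of its own, so there is nothing internal to compare against; your inductive coefficient-matching argument (after first using the Newton identities with $\psi^k=\mathrm{id}$ and $\mathbb{Z}$-torsion-freeness to justify $\lambda^k(r)=\binom{r}{k}\in R$, hence that each factor $(1-(-t)^i)^{a_i}$ lies in $\Lambda(R)$) is the standard one and recovers the cited result cleanly.
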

By Lemma \ref{ENRdefine}, we can state the following theorem.
\begin{theo}{\rm\cite[Theorem 5.42]{Yau}}\label{YauTheo}
Suppose that $R$ is a binomial ring. Then the map $E_{Nr}:\Lambda(R)\rightarrow \Nr{R}$ defined by 
\[ E_{Nr}\Big(\prod_{i=1}^{\infty}(1-(-t)^i)^{a_i}\Big)=\vect{a}, \]
is a ring isomorphism, and $z=\phi\circ E_{Nr}$ holds.
\end{theo}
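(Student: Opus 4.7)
By Lemma~\ref{ENRdefine}, every $f\in\Lambda(R)$ has a unique factorization $f(t)=\prod_{i\geq 1}(1-(-t)^i)^{a_i}$, so $E_{Nr}$ is a well-defined bijection of sets, with inverse $\vect{a}\mapsto\prod_i(1-(-t)^i)^{a_i}$. Additivity of $E_{Nr}$ is immediate from this formula: since the addition $+_{\Lambda}$ of $\Lambda(R)$ is multiplication of power series, two factorizations combine by adding exponents, which exactly matches the componentwise addition $+_{Nr}$ of $\Nr{R}$. The additive units $1\in\Lambda(R)$ and $(0,0,\ldots)\in\Nr{R}$ correspond, and the identity $1+t=1-(-t)^1$ gives $E_{Nr}(1+t)=(1,0,0,\ldots)$, pairing the multiplicative units as well.

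The technical heart of the argument is the identity $z=\phi\circ E_{Nr}$. I would compute the logarithmic derivative of a single factor,
\[ -t\frac{d}{dt}\log(1-(-t)^i)=\frac{i(-t)^i}{1-(-t)^i}=\sum_{k=1}^{\infty}i(-t)^{ik},\]
and then sum over $i$ with multiplicities $a_i$ to obtain
\[ -t\frac{d}{dt}\log f(t)=\sum_{n=1}^{\infty}\Big(\sum_{d\mid n}d\,a_d\Big)(-t)^n.\]
Reading off coefficients and comparing with the definitions of $z$ and $\phi$ shows $z(f)=\phi(E_{Nr}(f))$ for every $f\in\Lambda(R)$, which in particular proves the second assertion of the theorem.

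For multiplicativity, rather than manipulate the multiplication $\cdot_{\Lambda}$ directly, I would exploit $z=\phi\circ E_{Nr}$. Because a binomial ring is $\mathbb{Z}$-torsion free, Proposition~\ref{prophi} makes $\phi$ an \emph{injective} ring homomorphism, and $z$ is a ring homomorphism by the remark immediately following its definition. Then for any $f,g\in\Lambda(R)$,
\[ \phi\bigl(E_{Nr}(f\cdot_{\Lambda}g)\bigr)=z(f\cdot_{\Lambda}g)=z(f)z(g)=\phi(E_{Nr}(f))\phi(E_{Nr}(g))=\phi\bigl(E_{Nr}(f)\cdot_{Nr}E_{Nr}(g)\bigr),\]
and injectivity of $\phi$ forces $E_{Nr}(f\cdot_{\Lambda}g)=E_{Nr}(f)\cdot_{Nr}E_{Nr}(g)$. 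Combined with additivity and the set-theoretic bijectivity of $E_{Nr}$, this completes the proof.

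The main obstacle is the identification $z=\phi\circ E_{Nr}$: once it is established, injectivity of $\phi$ transfers the known multiplicativity of $z$ to $E_{Nr}$ for free, sidestepping any need to work with the opaque multiplication of $\Lambda(R)$. In carrying out the logarithmic derivative one has to be careful with the signs produced by the substitution $u=-t$ (which cancel against the sign in $-t\,d/dt$), and one implicitly invokes formal termwise differentiation of the infinite product, legitimate in $R[[t]]$ because for each fixed degree only finitely many factors contribute.
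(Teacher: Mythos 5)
The paper does not give its own proof of this theorem; it is cited verbatim from Yau's book, so there is no internal argument to compare against. Your proposal is a correct and complete proof, and it follows the standard line: Lemma~\ref{ENRdefine} gives set-theoretic bijectivity and, via uniqueness of exponents, additivity; the logarithmic-derivative computation (which checks out, including the sign bookkeeping $-t\cdot(-t)^{i-1}=(-t)^i$) yields $z=\phi\circ E_{Nr}$ by matching the coefficient of $(-t)^n$ with $\sum_{d\mid n}d\,a_d$, which is precisely the $n$-th component of $\phi$; and since a binomial ring is $\mathbb{Z}$-torsion free, Proposition~\ref{prophi} makes $\phi$ an injective ring homomorphism, so multiplicativity of $E_{Nr}$ is transferred from the known multiplicativity of $z$ without ever touching the opaque product on $\Lambda(R)$. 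The one point worth spelling out if you write this up in full is that the termwise sum $\sum_i a_i\bigl(-t\tfrac{d}{dt}\log(1-(-t)^i)\bigr)$ is well-defined because for each fixed $n$ only the finitely many factors with $i\leq n$ contribute to the coefficient of $t^n$ — you do flag this, which is good.
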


We define three types of ring-homomorphisms. Let $R$ and $S$ be commutative rings and $F:R\rightarrow S$ be a ring homomorphism. We define maps $\Lam{F}:\Lam{R}\rightarrow \Lam{S}$, $\Nr{F}:\Nr{R}\rightarrow \Nr{S}$ and $\Gh{F}:\Gh{R}\rightarrow \Gh{S}$ by
\begin{eqnarray*}
\Lam{F}\Big(1+\sum_{i=1}^{\infty}a_it^i\Big)&:=&1+\sum_{i=1}^{\infty}F(a_i)t^i, \\
\Nr{F}(\vect{x})&:=&(F(x_1),F(x_2),F(x_3),\ldots) \\
\Gh{F}(\vect{y})&:=&(F(y_1),F(y_2),F(y_3),\ldots)
\end{eqnarray*}
for any $a_1,a_2,\ldots\in R$, $\vect{x}\in\Nr{R}$ and $\vect{y}\in\Gh{R}$. 

\begin{prop}\label{triplecomm}
Let $F:R\rightarrow S$ be a ring homomorphism between two rings $R$ and $S$.
\begin{itemize}
\item [{\rm (1)}] Three maps $\Lambda(F), \Nr{F}$ and $\Gh{F}$ are ring homomorphisms.
\item [{\rm (2)}] One has $z\circ\Lambda(F)=\Gh{F}\circ z$ and $\phi\circ \Nr{F}=\Gh{F}\circ \phi$. If $R$ and $S$ are binomial rings, then $\Nr{F}\circ E_{Nr}=E_{Nr}\circ\Lambda(F)$ holds.
\end{itemize}
\end{prop}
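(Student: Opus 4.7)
The plan is to reduce each of the three assertions in (1) to the observation that the relevant structure is given by $\mathbb{Z}$-polynomial operations in coefficients, and then to bootstrap the commutativity for $E_{Nr}$ from those for $\phi$ and $z$.

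For (1), the ring $\Gh{R}$ has componentwise operations, so $\Gh{F}$ is a ring homomorphism because $F$ preserves addition, multiplication, and the unit. The ring $\Nr{R}$ also has componentwise addition, and its multiplication $\alpha_n=\sum_{[i,j]=n}(i,j)x_iy_j$ is a $\mathbb{Z}$-linear expression in products $x_iy_j$; since $F$ is a ring homomorphism, applying $F$ componentwise commutes with this rule, and $F$ sends the unit $(1,0,0,\ldots)$ to itself. For $\Lam{F}$, the addition on $\Lam{R}$ is power series multiplication in $R[[t]]$ and hence is given by $\mathbb{Z}$-polynomial combinations of the coefficients; multiplication and $\lambda$-operations on $\Lam{R}$ are likewise defined by universal integer polynomials as recalled in \cite{Knu} and \cite{Yau}. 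Since $F$ commutes with any integer polynomial expression in ring elements, $\Lam{F}$ preserves the full ring structure.

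For the first two identities in (2), the coefficients of $z(f)$ are determined by the logarithmic derivative $-t(d/dt)\log f(t)$, whose expansion via Newton's identities yields each $a_n$ as an integer polynomial in the coefficients of $f$; therefore $z\circ\Lam{F}=\Gh{F}\circ z$. For $\phi$, the formula $\phi(x)_n=\sum_{d\mid n}dx_d$ is again a $\mathbb{Z}$-linear combination of the $x_d$, and so $\phi\circ\Nr{F}=\Gh{F}\circ\phi$ by a direct componentwise check. For the last identity, assume $R$ and $S$ are binomial rings. By Theorem \ref{YauTheo}, $z=\phi\circ E_{Nr}$ on each, hence
\[\phi\circ\Nr{F}\circ E_{Nr}=\Gh{F}\circ\phi\circ E_{Nr}=\Gh{F}\circ z=z\circ\Lam{F}=\phi\circ E_{Nr}\circ\Lam{F}.\]
Because $S$ is $\mathbb{Z}$-torsion free (binomial rings are by definition), the map $\phi:\Nr{S}\to\Gh{S}$ is injective by Proposition \ref{prophi}, and cancelling $\phi$ yields $\Nr{F}\circ E_{Nr}=E_{Nr}\circ\Lam{F}$.

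The only non-mechanical point is the integrality of the universal polynomials defining multiplication and $\lambda$-operations on $\Lam{R}$; I would handle this by appealing to the standard construction in \cite{Knu} and \cite{Yau} rather than reproving it here. Once that input is granted, every step is either a componentwise check or a formal cancellation argument using the injectivity of $\phi$ supplied by Proposition \ref{prophi}, so no substantive obstacle remains.
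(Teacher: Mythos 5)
Your proposal is correct and follows essentially the same route as the paper: establish (1) by noting that all three ring structures are defined by universal integer-polynomial expressions in components/coefficients (the paper cites references for $\Lam{F}$ and $\Nr{F}$ and does the componentwise check for $\Gh{F}$), prove the first two identities of (2) by comparing the $n$-th components via those same universal polynomials, and obtain the third identity by precomposing with $\phi$, using $z=\phi\circ E_{Nr}$, and then cancelling $\phi$ via its injectivity from Proposition \ref{prophi}. The only cosmetic difference is that you invoke Newton's identities to justify the integrality of the $z$-coefficients, while the paper simply introduces abstract polynomials $Z_n$ with the required property.
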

\begin{proof}
(1) For $\Lambda(F)$, see \cite[p.104]{NO}, and for $\Nr{F}$, see \cite[p.18]{VW}. For $\Gh{F}$,
\begin{eqnarray*}
&{}&\Gh{F}(\vect{a})+\Gh{F}(\vect{b})\\
&=&(F(a_1),F(a_2),F(a_3),\ldots)+(F(b_1),F(b_2),F(b_3),\ldots)\\
&=&(F(a_1+b_1),F(a_2+b_2),F(a_3+b_3),\ldots)\\
&=&\Gh{F}(a_1+b_1,a_2+b_2,a_3+b_3,\ldots),\\
&{}&\Gh{F}(\vect{a})\Gh{F}(\vect{b})\\
&=&(F(a_1),F(a_2),F(a_3),\ldots)(F(b_1),F(b_2),F(b_3),\ldots)\\
&=&(F(a_1b_1),F(a_2b_2),F(a_3b_3),\ldots)=\Gh{F}(a_1b_1,a_2b_2,a_3b_3,\ldots),\\
&{}&\Gh{F}(1,1,1,\ldots)=(1,1,1,\ldots)
\end{eqnarray*}
holds for any $\vect{a}, \vect{b}\in\Gh{R}$. Then, the map $\Gh{F}$ is a ring homomorphism.

(2) We prove $z\circ\Lambda(F)=\Gh{F}\circ z$. For any integer $n\geq 1$, let $Z_n\vect{x}$ be a polynomial which satisfies that the $n$-th element of $z(1+\sum_{i=1}^{\infty}a_it^i)$ is $Z_n\vect{a}$ where $a_1,a_2,\ldots\in R$. Then, the $n$-th element of $z\circ\Lambda(F)(f)$ and $\Gh{F}\circ z(f)$ coincide with $Z_n(F(a_1), F(a_2), F(a_3),\ldots)$ for any $f=1+\sum_{i=1}^{\infty}a_it^i$. Then, we have $z\circ\Lambda(F)=\Gh{F}\circ z$.

Similarly, we show that both $n$-th elements of $\phi\circ \Nr{F}(x)$ and $\Gh{F}\circ \phi(x)$ are equal in order to prove the second equation of $(2)$, where $x=\vect{x}\in\Nr{R}$. Clearly, it is $\sum_{d\mid n}dF(x_d)$. Then, we have $\phi\circ \Nr{F}=\Gh{F}\circ \phi$.

To prove the third equation of $(2)$, we use Lemma \ref{ENRdefine}. One has 
\begin{eqnarray*}
\phi\circ\Nr{F}\circ E_{Nr}=\Gh{F}\circ\phi\circ E_{Nr}=\Gh{F}\circ z\\
=z\circ\Lambda(F)=\phi\circ E_{Nr}\circ\Lambda(F)=\phi\circ E_{Nr}\circ\Lambda(F).
\end{eqnarray*}
By the assumption, the ring $S$ is $\mathbb{Z}$-torsion free. Hence $\Nr{F}\circ E_{Nr}=E_{Nr}\circ\Lambda(F)$ holds by Proposition \ref{prophi}.
\end{proof}

By the definition of $\Lam{F}$, $\Nr{F}$ and $\Gh{R}$, we have the following proposition.
\begin{prop}
For any ring homomorphisms $F:R\rightarrow S$ and $G:S\rightarrow T$, we have $\Lam{G\circ F}=\Lam{G}\circ\Lam{F}$, $\Nr{G\circ F}=\Nr{G}\circ\Nr{F}$ and $\Gh{(G\circ F)}=\Gh{G}\circ\Gh{F}$.
\end{prop}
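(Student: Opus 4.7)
The approach is entirely direct: all three operators $\Lambda(-)$, $Nr(-)$, $\Gh{(-)}$ are defined coefficientwise (resp. componentwise) via the underlying ring map, so the claim reduces to observing that composition on coefficients commutes with composition of the outer functors. Since the preceding proposition already established that these are ring homomorphisms, no ring-theoretic verification is needed here—only the equality of the underlying set maps.

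The plan is to handle the three identities in parallel. First I would fix an arbitrary element of the source: $f = 1+\sum_{i=1}^{\infty} a_i t^i \in \Lambda(R)$ for the first identity, $x=\vect{x}\in Nr(R)$ for the second, and $y=\vect{y}\in \Gh{R}$ for the third. For each, I would apply the two sides to this element using the definitions of $\Lambda(F)$, $Nr(F)$, $\Gh{F}$ given just above, together with the same definitions for $G$ and for the composite $G\circ F$.

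For the $\Lambda$ case I would compute
\[
\Lambda(G)\circ\Lambda(F)(f) = \Lambda(G)\Bigl(1+\sum_{i=1}^{\infty}F(a_i)t^i\Bigr) = 1+\sum_{i=1}^{\infty}G(F(a_i))t^i = \Lambda(G\circ F)(f),
\]
and for the $Nr$ and $\Gh{-}$ cases I would verify the analogous componentwise identities
\[
Nr(G)\circ Nr(F)(x) = (G(F(x_1)), G(F(x_2)),\ldots) = Nr(G\circ F)(x),
\]
and the same pattern for $\Gh{(G\circ F)}=\Gh{G}\circ\Gh{F}$ applied to $y$.

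There is no real obstacle; the only thing to be careful about is bookkeeping, namely that every coefficient or component of the image sees exactly one application of $F$ followed by one application of $G$, which matches one application of $G\circ F$. Since equality of two elements of $\Lambda(T)$, $Nr(T)$, or $\Gh{T}$ is tested coefficientwise (resp. componentwise), this suffices.
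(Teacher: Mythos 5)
Your proposal is correct and matches the paper's implicit argument: the paper states this proposition immediately after the definitions of $\Lam{F}$, $\Nr{F}$, $\Gh{F}$ with the remark ``By the definition of $\Lam{F}$, $\Nr{F}$ and $\Gh{R}$, we have the following proposition,'' giving no written proof because the coefficientwise/componentwise verification is exactly the one you carry out. There is no gap and nothing to add.
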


\subsection{Exterior powers of representations of finite groups}
In \S 2.3, we discuss representations of finite groups and, in particular, a character of exterior powers of representations of finite groups using $\lambda$-rings. Main references of \S 2.3 is \cite{Knu}.

The set of all representations of a finite group $G$ has an equivalent relation which is defined by isomorphisms of representations of $G$. Let $M(G)$ be the set of equivalence classes. For any representation $V$, we denote an equivalences class of $V$ by $[V]\in M(G)$. The set $M(G)$ has a $\lambda$-semiring structure whose addition is defined via the direct sum of two representations, multiplication is defined via the tensor product of two representations, and $\lambda$-operations are defined via exterior powers of representations.

We define the representation ring of $G$, written by $R(G)$, by the ring completion of the $\lambda$-semiring $M(G)$. By Proposition \ref{RingComp}, the representation ring $R(G)$ has a pre-$\lambda$-ring structure, and the representation ring $R(G)$ is a free abelian group with equivalence classes of irreducible representations of $G$ as generators (see Appendix A.6).

Let $CF(G)$ be the set of all class functions. It has a $\mathbb{C}$-algebra $\psi$-ring structure with the following operations:
\begin{eqnarray*}
\begin{cases}
(f_1+f_2)(g):=f_1(g)+f_2(g),\\
(f_1f_2)(g):=f_1(g)f_2(g), \\
(cf)(g):=cf(g),\\
\psi^n(f)(g):=f(g^n)
\end{cases} 
\end{eqnarray*}
for any $f, f_1, f_2\in CF(G), c\in\mathbb{C}$, integer $n\geq 1$ and $g\in G$. In particular, the set $CF(G)$ is a $\mathbb{Q}$-algebra. Thus, by Theorem \ref{LambdaGh} and the above operations, the set $CF(G)$ has a $\lambda$-ring structure.

For any representation $\rho:G\rightarrow GL(V)$, we denote the character of $V$ by $X(V)$, which satisfies that $X(V)(g)=\mathrm{Tr}(\rho(g))$ for any $g\in G$. The character of a representation satisfies the followings for any representations $V$ and $W$.
\begin{itemize}
\item [{\rm (1)}]$X(V)=X(W)$ whenever two representations $V$ and $W$ are isomorphic,
\item [{\rm (2)}]$X(V\oplus W)=X(V)+X(W)$,
\item [{\rm (3)}]$X(V\otimes W)=X(V)X(W)$.
\end{itemize}
Then, the mapping $V\mapsto X(V)$ defines the ring homomorphism $X:R(G)\rightarrow CF(G)$ by that $X([V])=X(V)$ holds for any representation $V$.

\begin{theo}{\rm\cite[p.84]{Knu}}\label{mainly}
The map $X$ is an injective pre-$\lambda$-homomorphism. In particular, the representation ring $R(G)$ is a $\lambda$-ring.
\end{theo}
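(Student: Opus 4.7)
The plan is to establish the three assertions of the theorem---injectivity of $X$, its compatibility with $\lambda^i$, and the induced $\lambda$-ring structure on $R(G)$---in that order, with the middle step carrying the real content. For injectivity, I would appeal to the fact that $R(G)$, as the ring completion of $M(G)$, is the free abelian group on the isomorphism classes $[V_1],\ldots,[V_r]$ of the irreducible representations of $G$. The first orthogonality relations then ensure linear independence of the $X([V_i])$ in $CF(G)$, so any nonzero element of $R(G)$ has nonzero image under $X$.

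The central step is to verify, for each representation $\rho : G \to GL(V)$ with character $\chi = X([V])$, each integer $i \geq 0$, and each $g \in G$, that
\[ X(\lambda^i([V]))(g) \;=\; \lambda^i(\chi)(g). \]
Because $g$ has finite order, $\rho(g)$ is diagonalisable with root-of-unity eigenvalues $\xi_1,\ldots,\xi_m$ where $m=\deg V$, and a standard multilinear computation gives the left-hand side as the elementary symmetric polynomial $e_i(\xi_1,\ldots,\xi_m)$. For the right-hand side I would exploit the fact that the $\lambda$-ring structure on $CF(G)$ is the unique one (Theorem \ref{LambdaGh}) whose Adams operations are $\psi^n(f)(g)=f(g^n)$, together with the universal identity
\[ -t\frac{d}{dt}\log\lambda_t(\chi) \;=\; \sum_{n\geq 1}\psi^n(\chi)(-t)^n \]
defining Adams operations in any $\lambda$-ring. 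Evaluating at $g$ gives $\psi^n(\chi)(g)=\chi(g^n)=\sum_{j=1}^m\xi_j^n$, and integrating recovers
\[ \lambda_t(\chi)(g)\;=\;\exp\!\Bigl(\sum_{j=1}^m\log(1+\xi_j t)\Bigr)\;=\;\prod_{j=1}^m(1+\xi_j t), \]
whose coefficient of $t^i$ is once again $e_i(\xi_1,\ldots,\xi_m)$. The main obstacle sits precisely here: it is the classical Newton-identity translation between power sums (the Adams operations on $CF(G)$) and elementary symmetric polynomials (the $\lambda$-operations on $R(G)$ coming from exterior powers) that forces two a priori distinct constructions to coincide.

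Given these two steps, the final assertion is essentially formal. The image $X(R(G))\subseteq CF(G)$ is a subring closed under each $\lambda^i$, hence a $\lambda$-subring of the $\lambda$-ring $CF(G)$, and so itself a $\lambda$-ring. Since $X$ furnishes a bijection $R(G)\to X(R(G))$ of pre-$\lambda$-rings, this $\lambda$-ring structure transports back to $R(G)$ and agrees with the pre-$\lambda$-ring structure already in place because $X$ preserves every $\lambda^i$. Thus $R(G)$ is a $\lambda$-ring, completing the proof.
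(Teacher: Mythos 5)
The paper states Theorem~\ref{mainly} with only a citation to Knutson and supplies no proof of its own, so the comparison is against the argument in that reference. Your reconstruction---injectivity from linear independence of irreducible characters, diagonalisation of $\rho(g)$ into root-of-unity eigenvalues so that $X(\lambda^i[V])(g)=e_i(\xi_1,\ldots,\xi_m)$, the Adams-operation/Newton-identity computation identifying $\lambda^i(\chi)(g)$ with the same elementary symmetric polynomial, and the closing $\lambda$-subring transfer---is precisely the standard argument and is correct.
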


By Theorem \ref{mainly}, a character of exterior powers of representations can be written with $\lambda$-operations of $CF(G)$. For more detail, for any representation $V$ whose character is $\chi$, the character of the $i$-th exterior power of $\rho$ is $\lambda^i(\chi)$.

We define $\Rep{G}$ by the image of the map $X$. It is a free abelian group with irreducible characters of $G$ as generators, and is a $\lambda$-ring. We call an element of $\Rep{G}$ a virtual character.

For any $g\in G$, we define the map $\Eva{G}{g}:CF(G)\rightarrow\mathbb{C}$ by $\Eva{G}{g}(f):=f(g)$ for any $f\in CF(G)$. It is a ring homomorphism. By this define we can write $\lambda_t(\chi)(g)$ which is (\ref{IntroLambda1}) by 
\[ \lambda_t(\chi)(g)=\Lambda(\Eva{G}{g})(\lambda_t(\chi))=\sum_{i=0}^{\infty}\lambda^i(\chi)(g)t^i\]
for any character $\chi$ of $G$ and $g\in G$.

\subsection{Algebraic integers}
In \S2.4, we consider algebraic integers for \S 3.3. For more detail, see \cite[\S 17]{CR}.

We define a monic polynomial $f(t)\in\mathbb{Z}[t]$ by $a_n=1$ where $f(t)=\sum_{i=0}^{n}a_it^i$. In this paper, we define an algebraic integer $\alpha$ by that there exists an irreducible monic polynomial $f(t)\in\mathbb{Z}[t]$ such that $f(\alpha)=0$ holds. 

\begin{theo}[\text{\cite[(17.2)]{CR}}]\label{alge1}
Any zero of a monic polynomial in $\mathbb{Z}[t]$ is an algebraic integer.
\end{theo}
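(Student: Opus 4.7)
The plan is to show that an arbitrary zero $\alpha$ of a monic $f(t)\in\mathbb{Z}[t]$ is actually a zero of some \emph{irreducible} monic polynomial in $\mathbb{Z}[t]$, thereby meeting the definition given just before the theorem. The only non-trivial input will be Gauss's lemma, which guarantees that when we pass from irreducible factors over $\mathbb{Q}[t]$ to irreducible factors over $\mathbb{Z}[t]$, monicness is preserved.

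First, I would work over $\mathbb{Q}[t]$, which is a unique factorization domain. Write
\[
f(t) = p_1(t)\, p_2(t)\cdots p_k(t),
\]
where each $p_i(t)\in\mathbb{Q}[t]$ is irreducible in $\mathbb{Q}[t]$. After absorbing leading coefficients, we may assume each $p_i$ is monic in $\mathbb{Q}[t]$, so that the product is monic, matching $f$. Since $\alpha$ is a zero of $f$, it must be a zero of some factor $p_j$.

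Next, I would upgrade each $p_i$ to a polynomial in $\mathbb{Z}[t]$. Each monic $p_i\in\mathbb{Q}[t]$ can be written as $p_i(t)=\tfrac{1}{d_i}\tilde{p}_i(t)$ with $\tilde{p}_i(t)\in\mathbb{Z}[t]$ primitive and $d_i\in\mathbb{N}$. Then $\prod_i \tilde p_i(t) = (\prod_i d_i)\, f(t)$. By Gauss's lemma the product of primitive polynomials is primitive; on the other hand $f$ is monic hence primitive, so $\prod_i d_i = 1$, i.e.\ every $d_i=1$. Thus each $p_i$ already lies in $\mathbb{Z}[t]$ and is monic there. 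Irreducibility in $\mathbb{Z}[t]$ of a primitive polynomial is equivalent to irreducibility in $\mathbb{Q}[t]$ (again Gauss's lemma), so each $p_i\in\mathbb{Z}[t]$ is an irreducible monic polynomial.

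Finally, $\alpha$ is a root of the irreducible monic $p_j(t)\in\mathbb{Z}[t]$, so by the definition given before the theorem, $\alpha$ is an algebraic integer. The main subtlety is purely the Gauss-lemma step that guarantees the $\mathbb{Q}[t]$-factorization of a monic integer polynomial can be realized by monic integer factors; everything else is an application of unique factorization in $\mathbb{Q}[t]$ and the definition.
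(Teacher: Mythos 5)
The paper offers no proof here: Theorem~\ref{alge1} is quoted verbatim from Curtis--Reiner \cite[(17.2)]{CR}, so there is no in-paper argument to compare against. Your proof is correct and self-contained. Two small remarks. First, the ``Gauss's lemma'' you invoke twice is the primitivity form (a product of primitive polynomials is primitive, and irreducibility of a primitive polynomial over $\mathbb{Z}$ is equivalent to irreducibility over $\mathbb{Q}$); this is a bit more than what the paper records as Lemma~\ref{Gauss}, which is only the one-way implication ``irreducible over $\mathbb{Z}$ $\Rightarrow$ irreducible over $\mathbb{Q}$.'' That is not a gap in your argument --- the primitivity version is standard and is exactly what one needs --- but if you wanted to match the paper's toolkit literally, you would need to record the primitivity statement as an additional lemma. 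Second, in the step writing a monic $p_i\in\mathbb{Q}[t]$ as $\tilde{p}_i/d_i$ with $\tilde{p}_i\in\mathbb{Z}[t]$ primitive and $d_i\in\mathbb{N}$, it is worth saying explicitly that $d_i$ is forced to be the (positive) leading coefficient of $\tilde{p}_i$, since $p_i$ is monic; this is what makes the representation well defined and the $d_i$ positive integers. With those clarifications the argument is airtight and, incidentally, gives more than the theorem asks for --- you have shown that every monic $f\in\mathbb{Z}[t]$ factors in $\mathbb{Z}[t]$ into monic irreducibles.
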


By Theorem \ref{alge1}, we see that a primitive $e$-th roof of unity, written by $\omega$ is also an algebraic integer with $f(t)=t^e-1$.

To state the main theorems, we consider the following lemmas.
\begin{lemm}[\text {\cite[(17.7)]{CR}}]\label{alge2}
If $\alpha$ and $\beta$ are algebraic integers, then $\alpha+\beta$ and $\alpha\beta$ are also algebraic integers.
\end{lemm}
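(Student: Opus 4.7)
The plan is to prove the lemma via the standard module-theoretic characterization: an element $\gamma\in\mathbb{C}$ is an algebraic integer if and only if there exists a nonzero finitely generated $\mathbb{Z}$-submodule $M\subset\mathbb{C}$ with $\gamma M\subseteq M$. Once this characterization is in hand, both $\alpha+\beta$ and $\alpha\beta$ will preserve a common finitely generated module, from which the conclusion follows.

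First I would establish the forward direction of the characterization. If $\gamma$ is an algebraic integer with monic minimal polynomial of degree $n$, then $\mathbb{Z}[\gamma]$ is a $\mathbb{Z}$-module generated by $1,\gamma,\ldots,\gamma^{n-1}$, and it is stable under multiplication by $\gamma$. Applying this to $\alpha$ (degree $n$) and $\beta$ (degree $m$) and then taking the subring $M:=\mathbb{Z}[\alpha,\beta]\subset\mathbb{C}$, one sees that $M$ is generated as a $\mathbb{Z}$-module by the finite family $\{\alpha^i\beta^j\mid 0\le i<n,\ 0\le j<m\}$. Since $M$ is a ring containing both $\alpha+\beta$ and $\alpha\beta$, it is stable under multiplication by either of them.

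The main step is the reverse direction of the characterization, which is the ``determinant trick''. Suppose $M$ is a nonzero $\mathbb{Z}$-submodule of $\mathbb{C}$ generated by $m_1,\ldots,m_k$, and $\gamma M\subseteq M$. Then there exist integers $a_{ij}$ with $\gamma m_i=\sum_{j}a_{ij}m_j$, so the column vector $(m_1,\ldots,m_k)^{T}$ is annihilated by the matrix $\gamma I-A$ with $A=(a_{ij})$. Multiplying on the left by the adjugate of $\gamma I-A$ yields $\det(\gamma I-A)\,m_i=0$ for every $i$. Since at least one $m_i$ is nonzero and $\mathbb{C}$ is a field, we conclude $\det(\gamma I-A)=0$. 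The expansion of this determinant is a monic polynomial in $\gamma$ with integer coefficients, so by Theorem \ref{alge1} the element $\gamma$ is an algebraic integer.

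Applying this criterion with $\gamma=\alpha+\beta$ and with $\gamma=\alpha\beta$, both using the single module $M=\mathbb{Z}[\alpha,\beta]$ constructed above, finishes the proof. The main obstacle is the determinant trick itself, which is not entirely routine because the chosen generators $\{\alpha^i\beta^j\}$ need not form a basis (there may be $\mathbb{Z}$-linear relations among them), so one cannot simply read off a characteristic polynomial; the adjugate argument sidesteps this by showing that $\det(\gamma I-A)$ annihilates all of $M$ rather than being the characteristic polynomial of an actual matrix representation. No compatibility with the ``irreducible'' clause of the paper's definition of algebraic integer needs to be worried about, since Theorem \ref{alge1} already asserts that any root of a monic integer polynomial qualifies.
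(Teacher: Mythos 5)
The paper does not prove this lemma; it merely cites it as (17.7) of Curtis--Reiner, so there is no internal proof to compare against. Your argument is the standard and correct one: the module-theoretic characterization of algebraic integers via finitely generated stable $\mathbb{Z}$-submodules, proved by the adjugate (determinant) trick, applied to the common module $\mathbb{Z}[\alpha,\beta]$. You correctly flag the two points that could trip someone up --- that the chosen generators $\alpha^i\beta^j$ need not be $\mathbb{Z}$-linearly independent, which is exactly why one uses the adjugate identity $\operatorname{adj}(\gamma I-A)(\gamma I-A)=\det(\gamma I-A)I$ rather than appealing to a characteristic polynomial of a genuine linear map, and that the resulting monic integer polynomial need not be irreducible, which is absorbed by Theorem \ref{alge1}. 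No gap.
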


\begin{lemm}[\text {\cite[p.106, Exercise]{CR}}]\label{alge3}
A rational number is an algebraic integer if and only if it is an integer.
\end{lemm}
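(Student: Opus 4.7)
The plan is to prove the two directions separately; neither direction is difficult, and the key tool is essentially the rational root theorem.

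For the easy ``if'' direction, I would note that any integer $n \in \mathbb{Z}$ is a root of $f(t) = t - n \in \mathbb{Z}[t]$, and this polynomial is monic and irreducible (it is linear with no nontrivial factorization in $\mathbb{Z}[t]$), so $n$ is an algebraic integer by the paper's definition.

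For the ``only if'' direction, suppose $\alpha \in \mathbb{Q}$ is an algebraic integer. Write $\alpha = p/q$ with $p \in \mathbb{Z}$, $q \in \mathbb{N}$, and $(p,q) = 1$. By hypothesis there is an irreducible monic polynomial
\[ f(t) = t^n + a_{n-1} t^{n-1} + \cdots + a_1 t + a_0 \in \mathbb{Z}[t] \]
with $f(\alpha) = 0$. Substituting $\alpha = p/q$ and multiplying through by $q^n$ yields
\[ p^n + a_{n-1} p^{n-1} q + a_{n-2} p^{n-2} q^2 + \cdots + a_0 q^n = 0, \]
so that
\[ p^n = -q \bigl(a_{n-1} p^{n-1} + a_{n-2} p^{n-2} q + \cdots + a_0 q^{n-1}\bigr). \]
Hence $q \mid p^n$. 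Since $(p,q) = 1$ implies $(p^n, q) = 1$, we must have $q = 1$, and therefore $\alpha = p \in \mathbb{Z}$.

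There is no real obstacle here; the argument is classical, and the irreducibility condition built into the paper's definition of ``algebraic integer'' plays no role in either direction (only that the witnessing polynomial is monic with integer coefficients). The only thing worth flagging is to keep the coprimality hypothesis $(p,q) = 1$ visible throughout, since it is what lets us pass from $q \mid p^n$ to $q = 1$.
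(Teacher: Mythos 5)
Your proof is correct, but it takes a different route from the paper's. You use the classical rational-root argument: write $\alpha = p/q$ in lowest terms, substitute into the monic polynomial $f$, clear denominators by $q^n$, and read off $q \mid p^n$, whence $q = 1$ by coprimality. The paper instead exploits the irreducibility hypothesis directly: since $\alpha \in \mathbb{Q}$ is a root of $f$, the factor $t - \alpha$ divides $f(t)$ in $\mathbb{Q}[t]$; by Gauss's Lemma (which the paper has available as Lemma \ref{Gauss}), $f$ remains irreducible over $\mathbb{Q}$, forcing $f(t) = t - \alpha$, and then $\alpha \in \mathbb{Z}$ because $f$ has integer coefficients. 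Your remark that irreducibility ``plays no role'' is true for your argument but not for the paper's --- the paper's proof hinges on it. Your version is more elementary and self-contained (it works for any monic $f \in \mathbb{Z}[t]$ with $\alpha$ as a root, and needs no appeal to Gauss's Lemma); the paper's version is shorter in context because Gauss's Lemma has already been stated, and it makes the irreducibility in the definition do actual work.
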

\begin{proof}
Let $\alpha$ is a rational number. If $\alpha$ is an algebraic integers, then there exists an irreducible polynomial $f\in\mathbb{Z}[t]$ such that $f(\alpha)=0$ holds. Hence, we have $t-\alpha \mid f(t)$. Moreover, a polynomial $f(t)$ is an irreducible as a polynomial of $\mathbb{Q}[t]$ by the Gauss Lemma (see Lemma \ref{Gauss}). Then, we have $f(t)=t-\alpha$. Hence, we have $\alpha\in\mathbb{Z}$.

Conversely, if $\alpha\in\mathbb{Z}$ then we put $f(t)=t-\alpha$. Then, an integer $\alpha$ is an algebraic integer.
\end{proof}

\section{Main Result}\label{Prelim}
In this section, we focus on integer-valued characters of finite groups as main results of this paper with \S 2. First, we recall the definition of integer-valued characters.
\begin{defi}
A virtual character $\chi$ will be said to be an integer-valued character if $\chi(g)$ is an integer for any $g\in G$.
\end{defi}

In \S\ref{SSExample}, we introduce two examples of integer-valued characters. In \S\ref{SSTruncated}, we define truncated operations, on $\Nr{R}$ and $\Gh{R}$. In \S\ref{SSInteger}, we characterize integer-valued characters in terms of $\lambda$-rings and necklace rings as a main result. In \S\ref{SSFinitesupport}, we introduce the notion of support of elements of $\Nr{R}$ and cycle of $\Gh{R}$ for any commutative ring $R$, and state the relation between integer-valued characters and elements of necklace rings which have finite support. In \S\ref{SSMultiFrobenius}, we discuss product groups and Frobenius operations to calculate $\lambda_t(\chi)(g)$ for a virtual character $\chi$ and an element $g$ of a finite group. Moreover, we discuss the multiplication of two elements of the image of truncation operations with Frobenius operations.

\subsection{Examples of integer-valued characters}\label{SSExample}
Now, we state two examples of integer-valued characters.
\begin{exa}
Permutation characters are integer-valued characters. See Appendix A.4.
\end{exa}
Next, we consider characters of symmetric groups.

\begin{lemm}\label{SN1}
Put $\sigma=(1\ 2\cdots n)\in S_n$ and let $d\geq 1$ be an integer.
\begin{itemize}
\item [{\rm (1)}] If $d$ and $n$ are coprime, then $\sigma^d$ is also a cycle of length $n$. 
\item [{\rm (2)}] If $d\mid n$, then we have
\[ \sigma^d=(1\ d+1\ 2d+1\cdots(d'-1)d+1)\cdots (d\ d+d\cdots (d'-1)d+d) \]
where $d'=n/d$.
\end{itemize}
\end{lemm}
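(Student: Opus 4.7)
The plan is to view $\sigma$ as the additive translation on the cyclic set $\{1,2,\ldots,n\}$ identified with $\mathbb{Z}/n\mathbb{Z}$, and then analyze the orbit structure of the iterate $\sigma^d$. Specifically, under the identification $i \leftrightarrow i \bmod n$ (with the class of $n$ corresponding to $0$), the cycle $\sigma=(1\ 2\ \cdots\ n)$ acts by $\sigma(i)\equiv i+1\pmod{n}$, and therefore $\sigma^d(i)\equiv i+d\pmod{n}$ for every integer $d\ge 1$. Thus both statements reduce to determining the orbits of the additive map $i\mapsto i+d$ on $\mathbb{Z}/n\mathbb{Z}$.

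For (1), assuming $(d,n)=1$, I would argue that the orbit of $1$ under $\sigma^d$ is $\{1,1+d,1+2d,\ldots,1+(n-1)d\}\pmod n$, and that this set has $n$ distinct elements because $kd\equiv k'd\pmod n$ with $0\le k,k'<n$ forces $k\equiv k'\pmod n$ via the coprimality of $d$ and $n$. Hence the orbit exhausts all of $\{1,2,\ldots,n\}$, so $\sigma^d$ is a single $n$-cycle.

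For (2), assuming $d\mid n$ and writing $d'=n/d$, the order of the element $d$ in $\mathbb{Z}/n\mathbb{Z}$ is exactly $d'$, so each orbit of $\sigma^d$ has size $d'$. I would then check directly that for each $r\in\{1,2,\ldots,d\}$, iterating $\sigma^d$ starting from $r$ produces $(r,\ r+d,\ r+2d,\ \ldots,\ r+(d'-1)d)$, where these $d'$ entries are pairwise distinct modulo $n$ (any congruence $r+kd\equiv r+k'd\pmod n$ with $0\le k,k'<d'$ forces $k=k'$ since $kd,k'd<n$). The $d$ starting points $r=1,\ldots,d$ give pairwise disjoint orbits, so their union is all of $\{1,\ldots,n\}$, which yields the claimed decomposition.

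The proof is entirely elementary and presents no real obstacle; the only minor care is to handle the convention that $\sigma$ sends $n$ to $1$, which is conveniently absorbed by passing to $\mathbb{Z}/n\mathbb{Z}$. I would not expand the routine verification that the listed entries are distinct, beyond invoking the coprimality argument in (1) and the bound $kd<n$ in (2).
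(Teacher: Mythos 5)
Your proof is correct and follows essentially the same route as the paper: both arguments pass to the additive shift by $d$ on $\mathbb{Z}/n\mathbb{Z}$, use coprimality to show the orbit has $n$ distinct elements in part (1), and in part (2) directly verify that the $d$ orbits starting at $1,\ldots,d$ each have length $d'=n/d$ and partition $\{1,\ldots,n\}$. Your write-up is actually a bit cleaner and more explicit than the paper's (which has several apparent typos in the displayed formulas for $a_{k,i}$ and in the case analysis of part (2)), but there is no substantive difference in method.
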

\begin{proof}
For any integer $i=1,\ldots,n$, put $a_{k,i}=\sigma^k(i)$. Then, we have $ki=b_{k,i}n+a_{k,i}$ for some integer $b_{k,i}\geq 0$. 

First, we assume that $k$ and $n$ are coprime. If $a_{k,i}=a_{k,j}$, then $k(i-j)=(b_{k,i}-b_{k,j})n$ holds. Thus, the integer $n$ divides $k(i-j)$, that is, the integer $n$ divides $i-j$. Hence, we have $i=j$, that is, $\sigma^k=(a_{k,1} \cdots a_{k,n})$ holds. 

Next, we assume $d\mid n$. If $i\leq (d'-1)d$, then $\sigma^d(i)=d+1$ holds. If $i=(d'-1)d+k$ where $k=1,\ldots, d'$, then $\sigma^d(i)=k$ holds. Hence, we have $\sigma^d=(1\ d+1\ 2d+1\cdots(d'-1)d+1)\cdots (d\ d+d\cdots (d'-1)d+d)$.
\end{proof}

\begin{exa}
A finite group is called a Q-group if all characters are integer-valued \cite[p.8]{Kle}. A typical example of Q-group is a symmetric group, and more generally, Weyl groups.

Here we recall the proof \cite[p.10]{Kle} of the fact that a symmetric group is a Q-group.

Let $\sigma$ and $\tau$ be elements of $S_n$ satisfying $\circg{\sigma}=\circg{\tau}$. Then, there exists an integer $k\geq 1$ such that $\sigma^k=\tau$ holds and $k$ is coprime to $O(\sigma)$ by Lemma \ref{APP1}. Let $\{\lambda_1,\ldots,\lambda_m\}$ be the cycle structure of $\sigma$. Then, the integer $k$ is coprime to $\lambda_i$ for any $i=1,\ldots,m$ by Lemma \ref{APP3}. Thus, the element $\tau$ has the same cycle structure by Lemma \ref{SN1}, that is, elements $\sigma$ and $\tau$ are conjugate by Lemma \ref{APP4}. Hence, a character of $S_n$ is an integer-valued character by \cite[Proposition 9]{Kle}.
\end{exa}


\subsection{Truncated operations}\label{SSTruncated}

Let $R$ be a commutative ring. In \S \ref{SSTruncated}, we define operations on $\Nr{R}$ and $\Gh{R}$ to state main theorems in \S\ref{SSInteger}, \S\ref{SSFinitesupport} and \S\ref{SSMultiFrobenius}, and discuss relations among operations.

\begin{defi}
For any integer $r\geq 1$, we define 
\begin{eqnarray*}
T_r&:&\Nr{R}\rightarrow \Nr{R},\\
T_r&:&\Gh{R}\rightarrow \Gh{R}
\end{eqnarray*}
by $y=\vect{y}=T_r(x)$ and $b=\vect{b}=T_r(a)$ where $x=\vect{x}\in\Nr{R}$, $a=\vect{a}\in\Gh{R}$, and
\begin{eqnarray*}
y_n&:=&\begin{cases} x_n & \mbox{if}\ n\mid r, \\ 0 & \mbox{if}\ n\nmid r, \end{cases} \\
b_n&:=&a_{(n,r)}.
\end{eqnarray*}
We call $T_r$ the $r$-th truncated operation.
\end{defi} 

In the following proposition, we consider the image of $T_r$.
\begin{prop}\label{ImageTr}
Let $r\geq 1$ be an integer.
\begin{itemize}
\item [{\rm (1)}] An element $x=\vect{x}\in\Nr{R}$ belongs to the image of $T_r$ if and only if $x_n=0$ holds for any integer $n\geq 1$ with $n\nmid r$.
\item [{\rm (2)}] An element $a=\vect{a}\in\Gh{R}$ belongs to the image of $T_r$ if and only if $a_n=a_{(n,r)}$ holds for any integer $n\geq 1$.
\end{itemize}
\end{prop}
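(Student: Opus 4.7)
The plan is to observe that in both cases the assertion amounts to describing the image of an idempotent map as the set of its fixed points, so the proof reduces to an elementary verification. In particular, it suffices to check, in each case, that (a) any element of the image satisfies the stated condition, and (b) any element satisfying the stated condition is a fixed point of $T_r$ and therefore lies in the image.

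For part (1), the ($\Rightarrow$) direction is immediate: if $y=T_r(x)$ and $n\nmid r$, then $y_n=0$ by the defining formula of $T_r$ on $\Nr{R}$. For the ($\Leftarrow$) direction, I would take any $x=\vect{x}\in\Nr{R}$ satisfying $x_n=0$ whenever $n\nmid r$ and compute $T_r(x)$ entry by entry: if $n\mid r$, then $T_r(x)_n=x_n$, while if $n\nmid r$, then $T_r(x)_n=0=x_n$. Thus $T_r(x)=x$, exhibiting $x$ as an element of the image of $T_r$.

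For part (2), the argument is parallel. For ($\Rightarrow$), if $b=T_r(a)$ then $b_n=a_{(n,r)}$ for every $n\geq 1$; since $(n,r)\mid r$ we have $((n,r),r)=(n,r)$, and hence $b_{(n,r)}=a_{((n,r),r)}=a_{(n,r)}=b_n$. For ($\Leftarrow$), given $a=\vect{a}\in\Gh{R}$ with $a_n=a_{(n,r)}$ for all $n\geq 1$, I compute $T_r(a)_n=a_{(n,r)}=a_n$, so $a=T_r(a)$ lies in the image.

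There is no substantive obstacle; both parts reduce to a direct unfolding of the definition of $T_r$. The only small point worth flagging is the identity $((n,r),r)=(n,r)$, used in part (2), which holds because $(n,r)\mid r$. In effect the whole proposition records that $T_r$ is idempotent on both $\Nr{R}$ and $\Gh{R}$, together with an explicit description of its fixed-point set.
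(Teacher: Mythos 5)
Your proof is correct and takes essentially the same approach as the paper: both directions in each part are handled by unfolding the definition of $T_r$ and showing that the stated condition characterizes the fixed points of the (idempotent) truncation map. Your forward direction in part (2), making explicit the identity $((n,r),r)=(n,r)$, is in fact a cleaner rendering of the paper's chain of equalities, which as printed contains a small slip ($a_n=b_n$ should read $a_n=b_{(n,r)}$).
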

\begin{proof}
We prove the statement (1). Let $x=\vect{x}$ be an element of the image of $T_r$. Then, there exists $y\in\Nr{R}$ such that $x=T_r(y)$ holds. For any integer $n\geq 1$ with $n\nmid r$, the element $x_n$ is the $n$-th element of $T_r(y)$, which is equal to $0\in R$. 

Next, we assume that $x_n=0$ holds for any integer $n\geq 1$ with $n\nmid r$. Put $u=\vect{u}=T_r(x)$. 
If an integer $n\geq 1$ satisfies $n\mid r$, then $x_n=u_n$ holds. If $n\nmid r$, then $x_n=u_n=0$ holds. In any case, we have $x=u=T_r(x)$, that is, the element $x$ belongs to the image of $T_r$.

(2) Let $a=\vect{a}$ be the element of the image of $T_r$. Then, there exists $b=\vect{b}\in\Nr{R}$ such that $a=T_r(b)$ holds. Hence we have $a_n=b_n=b_{(n,r)}=a_{(n,r)}$ for any integer $n\geq 1$.

Next, we assume that $a_n=a_{(n,r)}$ holds for any integer $n\geq 1$. Put $c=\vect{c}=T_r(a)$. Thus, we have $c_n=a_{(n,r)}=a_n$ for any integer $n\geq 1$, and hence, we have $a=c=T_r(a)$, that is, the element $a$ belongs to the image of $T_r$.
\end{proof}

\begin{prop}\label{main4lemm00}
The operation $T_r$ is a ring homomorphism for any integer $r\geq 1$.
\end{prop}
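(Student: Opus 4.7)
The plan is to split the argument according to the two settings (on $\Nr{R}$ and on $\Gh{R}$), since addition and multiplication are defined quite differently in the two rings. In each case the additive homomorphism property is immediate from the componentwise definition of addition, so the real work is verifying compatibility with multiplication and with the unit. I expect the $\Gh{R}$ case to be essentially trivial and the $\Nr{R}$ case to rest on a single divisibility observation.

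For $\Gh{R}$, where multiplication is componentwise, given $a=\vect{a}$ and $b=\vect{b}$ I would just compute the $n$-th entry of $T_r(ab)$ as $(a_{(n,r)})(b_{(n,r)})$, which matches $T_r(a)_n \cdot T_r(b)_n$. The unit element $(1,1,1,\ldots)$ is fixed by $T_r$ since $1_{(n,r)}=1$.

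For $\Nr{R}$ the key step is a direct calculation
\[
\bigl(T_r(x)\cdot_{Nr} T_r(y)\bigr)_n \;=\; \sum_{\substack{[i,j]=n \\ i\mid r,\ j\mid r}} (i,j)\,x_i y_j,
\]
which I want to match with $T_r(x\cdot_{Nr} y)_n$. The crucial observation, which is the single nontrivial ingredient of the whole proof, is the equivalence
\[
[i,j]\mid r \iff i\mid r \text{ and } j\mid r.
\]
Using this, if $n\mid r$ then the side condition $i\mid r,\, j\mid r$ is automatic from $[i,j]=n\mid r$, so the restricted sum equals the unrestricted sum $(x\cdot_{Nr} y)_n = T_r(x\cdot_{Nr} y)_n$. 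If instead $n\nmid r$, then $[i,j]=n$ precludes $[i,j]\mid r$, so there are no contributing pairs and both sides are $0$. Finally, $T_r$ sends the unit $(1,0,0,\ldots)$ to itself because the first coordinate is preserved (since $1\mid r$) and all other coordinates are already zero.

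The only spot where one has to think, rather than just unwind definitions, is that divisibility equivalence $[i,j]\mid r \Leftrightarrow i\mid r$ and $j\mid r$; I would phrase it as a one-line lemma and then the main identity drops out of the double sum with no further manipulation. No induction, no use of $\phi$, and no appeal to $\mathbb{Z}$-torsion freeness is needed.
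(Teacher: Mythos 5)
Your proposal is correct and follows essentially the same approach as the paper's proof: split into the two settings, dispose of additivity and the unit immediately, and for $\Nr{R}$ reduce the multiplicativity check to the observation that when $[i,j]=n$ divides $r$ the truncation constraints $i\mid r$, $j\mid r$ are automatic, while when $n\nmid r$ every term vanishes. The only cosmetic difference is that you isolate the equivalence $[i,j]\mid r \iff (i\mid r \text{ and } j\mid r)$ as a stated lemma, whereas the paper invokes it inline.
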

\begin{proof}
Obviously, the operation $T_r$ is an additive homomorphism and $T_r(1)=1$ holds in both cases. Thus, we consider the multiplication.

First, we prove this proposition in the case of $\Nr{R}$. Let \\$x=\vect{x}$ and $y=\vect{y}$ be elements of $\Nr{R}$. We put $\alpha=\vect{\alpha}=x\cdot_{Nr} y$, $u=\vect{u}=T_r(x)$, $v=\vect{v}=T_r(y)$ and $w=\vect{w}=T_r(\alpha)$. For any integers $r,n\geq 1$, the $n$-th elements of $T_r(x)\cdot_{Nr}T_r(y)$ is 
\begin{eqnarray}\label{Tr1}
\sum_{[i,j]=n}(i,j)u_iv_j
\end{eqnarray}
by the definition of multiplication of $\Nr{R}$. If $n\mid r$, then integers $i$ and $j$ satisfying $[i,j]=n$ divide $r$. Thus, (\ref{Tr1}) coincides with
\[ \sum_{[i,j]=n}(i,j)x_iy_j=\alpha_n=w_n. \]
If $n\nmid r$, then (\ref{Tr1}) is $0\in R$ because either $i$ or $j$ with $[i,j]=n$ does not divide $r$ and $w_n=0$ holds. Hence, we have $T_r(x)\cdot_{Nr}T_r(y)=w=T_r(\alpha)=T_r(x\cdots_{Nr}y)$.

Next, we prove this proposition in the case of $\Gh{R}$. Let $a=\vect{a}$ and $b=\vect{b}$ be elements of $\Gh{R}$. We put $c=\vect{c}=ab$, $e=\vect{e}=T_r(a)$, $f=\vect{f}=T_r(b)$ and $g=\vect{g}=T_r(c)$. For any integer $r,n\geq 1$, we have $e_nf_n=a_{(n,r)}b_{(n,r)}=c_{(n,r)}=g_n$, that is, the operation $T_r$ is a ring homomorphism.
\end{proof}

\begin{prop}\label{main4lemm01}
For any integers $r,s\geq 1$, we have $T_r\circ T_s=T_{(r,s)}$.
\end{prop}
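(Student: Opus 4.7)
The plan is to verify the identity componentwise in each of the two settings ($\Nr{R}$ and $\Gh{R}$), since $T_r$ is defined separately on each and the proposition is asserting the equality in both. Both verifications reduce to an elementary divisibility/gcd identity.

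First I would handle the necklace ring case. Let $x=\vect{x}\in\Nr{R}$ and unwind the definition of $T_r\circ T_s$ on the $n$-th coordinate: applying $T_s$ first gives $x_n$ when $n\mid s$ and $0$ otherwise, and then $T_r$ keeps the $n$-th coordinate only when $n\mid r$. So $(T_r\circ T_s(x))_n$ equals $x_n$ precisely when $n\mid r$ \emph{and} $n\mid s$, and is $0$ otherwise. The key elementary fact is that $n\mid r$ and $n\mid s$ if and only if $n\mid (r,s)$, so the $n$-th coordinate matches $T_{(r,s)}(x)_n$.

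Next I would handle the $\Gh{R}$ case. Let $a=\vect{a}\in\Gh{R}$. Applying the definitions, the $n$-th coordinate of $T_r\circ T_s(a)$ is $(T_s(a))_{(n,r)}=a_{((n,r),s)}$, while the $n$-th coordinate of $T_{(r,s)}(a)$ is $a_{(n,(r,s))}$. The identity $((n,r),s)=(n,(r,s))$ is just the associativity of the gcd, hence the two coordinates agree for every $n\geq 1$.

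Neither case presents a genuine obstacle; the argument is essentially bookkeeping on top of the gcd identity $((n,r),s)=(n,(r,s))$ (equivalently, the equivalence of $n\mid r$ and $n\mid s$ with $n\mid (r,s)$). The only thing to be mildly careful about is writing the two cases in parallel so that the shared elementary content is visible, rather than repeating the same idea twice in disguise.
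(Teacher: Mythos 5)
Your proof is correct and matches the paper's approach: both verify the identity componentwise in $\Nr{R}$ and $\Gh{R}$, reducing to the gcd facts that $n\mid r$ and $n\mid s$ if and only if $n\mid(r,s)$, and $((n,r),s)=(n,(r,s))$.
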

\begin{proof}
First, we prove this proposition in the case of $\Nr{R}$. Let \\ $x=\vect{x}$ be an element of $\Nr{R}$, and put $y=\vect{y}=T_s(x)$ and $\alpha=\vect{\alpha}=T_r(y)$. For any integer $d\geq 1$, if $d\mid (r,s)$, which implies $d\mid r$ and $d\mid s$, then we have $\alpha_d=y_d=x_d$. If $d\mid r$ and $d\nmid s$, then we have $\alpha_d=y_d=0$. If $d\nmid r$, then we have $\alpha_d=0$. Hence, we have $T_r\circ T_s=T_{(r,s)}$.

Next, we prove this proposition in the case of $\Gh{R}$. Let $x=\vect{x}$ be an element of $\in\Nr{R}$, and put $y=\vect{y}=T_s(x)$ and $\alpha=\vect{\alpha}=T_r(y)$. Then, we have $\alpha_n=y_{(n,r)}=x_{(n,r,s)}$ for any integer $n\geq 1$. Hence $T_r\circ T_s=T_{(r,s)}$ holds.
\end{proof}

\begin{prop}\label{lemm13}
The map $\phi:\Nr{R}\rightarrow \Gh{R}$ preserves truncated operations.
\end{prop}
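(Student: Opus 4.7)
The plan is to prove $\phi\circ T_r=T_r\circ\phi$ for every integer $r\geq 1$ by a direct unraveling of the definitions of the two truncated operations and of $\phi$. The identity is driven entirely by the elementary divisibility fact that, for $d\geq 1$, the conjunction $d\mid n$ and $d\mid r$ is equivalent to $d\mid (n,r)$; once this is isolated, the sums on the two sides collapse to the same expression.

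First I would fix an arbitrary $x=\vect{x}\in\Nr{R}$ and an integer $n\geq 1$, and compute the $n$-th entry of $\phi\circ T_r(x)$. Writing $y=\vect{y}=T_r(x)$, the definition of $\phi$ gives
\[
(\phi(y))_n=\sum_{d\mid n}d\,y_d.
\]
Since $y_d=x_d$ when $d\mid r$ and $y_d=0$ otherwise, the only surviving terms are those with $d\mid n$ and $d\mid r$, i.e.\ with $d\mid (n,r)$. Hence
\[
(\phi\circ T_r(x))_n=\sum_{d\mid (n,r)}d\,x_d.
\]

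Next I would compute the $n$-th entry of $T_r\circ\phi(x)$. By definition of the truncated operation on $\Gh{R}$, this is the $(n,r)$-th entry of $\phi(x)$, which is
\[
(T_r\circ\phi(x))_n=(\phi(x))_{(n,r)}=\sum_{d\mid (n,r)}d\,x_d.
\]
Comparing with the previous display, the two $n$-th entries agree for every $n\geq 1$, so $\phi\circ T_r(x)=T_r\circ\phi(x)$, and since $x$ was arbitrary this gives the claimed commutation.

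There is no substantial obstacle here; the only point that has to be stated explicitly is the equivalence $d\mid n\ \text{and}\ d\mid r\iff d\mid(n,r)$, which is what translates the ``restriction'' form of $T_r$ on $\Nr{R}$ into the ``contract to greatest common divisor'' form of $T_r$ on $\Gh{R}$. This is of the same flavor as the proofs of Propositions \ref{phiVF} and \ref{prodV1}, so the argument can be kept short.
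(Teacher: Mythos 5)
Your proof is correct and follows essentially the same route as the paper: both compute the $n$-th entry of $\phi\circ T_r(x)$ by truncating the sum $\sum_{d\mid n}d x_d$ to divisors of $(n,r)$, and identify this with $(\phi(x))_{(n,r)}=(T_r\circ\phi(x))_n$. The divisibility equivalence you isolate is exactly the middle equality in the paper's chain.
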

\begin{proof}
For any integer $r\geq 1$, we prove $T_r\circ\phi=\phi\circ T_r$. Let $x=\vect{x}$ be an element of $\Nr{R}$. We put $a=\vect{a}=\phi(x)$, $b=\vect{b}=T_r(a)$, $y=\vect{y}=T_r(x)$ and $c=\vect{c}=\phi(y)$. Then, we have
\begin{eqnarray*}
c_n=\sum_{d\mid n}dy_d=\sum_{d\mid n, d\mid r}dx_d=\sum_{d\mid (n,r)}dx_d=a_{(n,r)}=b_n.
\end{eqnarray*}
Hence $\phi\circ T_r=T_r\circ\phi$ holds.
\end{proof}
By Proposition \ref{lemm13}, we can state the following corollary.
\begin{cor}\label{lemm131}
Suppose that $R$ is $\mathbb{Z}$-torsion free. Let $x=\vect{x}$ be an element of $\Nr{R}$, let $r\geq 1$ be an integer, and put $a=\vect{a}=\phi(x)$. Then, the followings are equivalent.
\begin{itemize}
\item [{\rm (1)}] For any integer $n\geq 1$ with $n\nmid r$, we have $x_n=0$.
\item [{\rm (2)}] For any integer $n\geq 1$, we have $a_n=a_{(n,r)}$.
\end{itemize}
\end{cor}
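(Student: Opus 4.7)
The plan is to observe that each of the two conditions is exactly the characterization, via Proposition \ref{ImageTr}, of membership in the image of the truncated operation $T_r$. Specifically, condition $(1)$ says that $x$ lies in $T_r(\Nr{R})$, and condition $(2)$ says that $a=\phi(x)$ lies in $T_r(\Gh{R})$. Since by Proposition \ref{lemm13} the map $\phi$ intertwines the truncated operations on $\Nr{R}$ and $\Gh{R}$, I would expect these two membership conditions to be equivalent, with the injectivity of $\phi$ (guaranteed by Proposition \ref{prophi} under the $\mathbb{Z}$-torsion free hypothesis) being needed for one direction.

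For the direction $(1)\Rightarrow(2)$, I would argue as follows. Assuming $x_n=0$ for all $n\nmid r$, Proposition \ref{ImageTr}(1) gives $x=T_r(x)$. Applying $\phi$ and using Proposition \ref{lemm13}, we get
\[ a=\phi(x)=\phi(T_r(x))=T_r(\phi(x))=T_r(a), \]
so $a$ lies in the image of $T_r$ on $\Gh{R}$. Proposition \ref{ImageTr}(2) then yields $a_n=a_{(n,r)}$ for every $n\geq 1$.

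For the direction $(2)\Rightarrow(1)$, assume $a_n=a_{(n,r)}$ for all $n\geq 1$. By Proposition \ref{ImageTr}(2) this gives $a=T_r(a)$, and then Proposition \ref{lemm13} yields
\[ \phi(x)=a=T_r(a)=T_r(\phi(x))=\phi(T_r(x)). \]
Here is where the $\mathbb{Z}$-torsion free hypothesis on $R$ enters: by Proposition \ref{prophi} the map $\phi$ is injective, so $x=T_r(x)$, and Proposition \ref{ImageTr}(1) gives $x_n=0$ for every $n$ with $n\nmid r$.

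There is no real technical obstacle; the corollary is essentially an unwinding of the two propositions already proved together with the intertwining relation. The only subtlety to flag is that the $\mathbb{Z}$-torsion free hypothesis is used solely to invoke injectivity of $\phi$ in the second direction, while the first direction works for arbitrary commutative $R$.
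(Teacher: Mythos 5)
Your proposal is correct and follows essentially the same route as the paper's own proof: both directions rephrase conditions $(1)$ and $(2)$ as $x=T_r(x)$ and $a=T_r(a)$ via Proposition \ref{ImageTr}, transport between them using the intertwining relation $\phi\circ T_r=T_r\circ\phi$ of Proposition \ref{lemm13}, and invoke injectivity of $\phi$ (Proposition \ref{prophi}) for the converse direction. The observation that the $\mathbb{Z}$-torsion free hypothesis is only needed for $(2)\Rightarrow(1)$ is a nice remark, but the argument itself matches the paper.
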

\begin{proof}
First, we assume the statement (1). We have $x=T_r(x)$ by definition of truncated operations. Thus, we have $a=\phi(x)=\phi\circ T_r(x)=T_r\circ\phi(x)=T_r(a)$. The $n$-th element of $a$ and $T_r(a)$ are $a_n$ and $a_{(n,r)}$, that is, the statement (2) holds by Proposition \ref{ImageTr} (2).

Conversely, we assume the statement (2). Thus, we have $\phi(x)=a=T_r(a)=T_r\circ\phi(x)=\phi\circ T_r(x)$. Since $R$ is $\mathbb{Z}$-torsion free, we have $x=T_r(x)$. For any integer $n\geq 1$ with $n\nmid r$, the $n$-th element of $T_r(x)$ is $0\in R$, and hence, $x_n=0$ holds by Proposition \ref{ImageTr}(1).
\end{proof}

Next, we state relations among truncated operations, Frobenius operations and Verschiebung operations.

\begin{prop}\label{ComCF}
For any integers $r,s\geq 1$, we have $F_r\circ T_s=T_{s/(r,s)}\circ F_{(r,s)}$.
\end{prop}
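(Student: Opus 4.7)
The plan is to verify the identity componentwise in each of the two settings, $\Gh{R}$ and $\Nr{R}$. Throughout fix integers $r,s\geq 1$ and put $d=(r,s)$, writing $r=dr'$, $s=ds'$ with $(r',s')=1$.

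On $\Gh{R}$ the identity is a one-line gcd computation. For $a=\vect{a}\in\Gh{R}$ and $n\geq 1$,
\[ (F_r\circ T_s(a))_n=(T_s(a))_{nr}=a_{(nr,s)},\qquad (T_{s/d}\circ F_d(a))_n=a_{(n,s/d)\cdot d}. \]
Using $(r',s')=1$ one computes $(nr,s)=(ndr',ds')=d(nr',s')=d(n,s')=(n,s/d)\cdot d$, so the two components agree.

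On $\Nr{R}$ the corresponding components are
\[ (F_r\circ T_s(x))_n=\sum_{\substack{[j,r]=nr\\ j\mid s}}\tfrac{j}{n}\,x_j,\qquad (T_{s/d}\circ F_d(x))_n=\sum_{[j,d]=nd}\tfrac{j}{n}\,x_j, \]
with the right-hand side understood to be $0$ unless $n\mid s/d$. I would first note that if $[j,r]=nr$ and $j\mid s$, then $nr=[j,r]\mid[s,r]=rs/d$, so $n\mid s/d$ automatically; hence when $n\nmid s/d$ both sides vanish. Under the remaining assumption $n\mid s/d$, it suffices to check that the two index sets are equal:
\[ \{\,j:[j,r]=nr,\ j\mid s\,\}=\{\,j:[j,d]=nd\,\}. \]
For $(\subseteq)$: if $j\mid s$, then $\gcd(j,r)$ divides both $s$ and $r$, so $\gcd(j,r)\mid d$, which combined with $d\mid r$ yields $\gcd(j,r)=\gcd(j,d)$; the identity $[j,r]=nr$ therefore transports to $[j,d]=nd$. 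For $(\supseteq)$: parametrize $\gcd(j,d)=d/f$ with $f\mid d$, so $j=nd/f$; the hypothesis $[j,d]=nd$ forces $(n,f)=1$, and together with $n\mid s'$ and $(r',s')=1$ this gives $(n,fr')=1$. Then
\[ \gcd(j,r)=\gcd(nd/f,dr')=(d/f)\gcd(n,fr')=d/f, \]
so $[j,r]=nr$; moreover $j=nd/f\mid nd\mid s$ since $n\mid s/d$.

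The only real obstacle is the arithmetic bookkeeping in the reverse inclusion: the parametrization $\gcd(j,d)=d/f$ makes the key coprimality $(n,f)=1$ visible, after which everything falls out. An alternative route would be to prove the identity first on $\Gh{R}$ and transfer it to $\Nr{R}$ via $\phi$, using Propositions \ref{phiVF} and \ref{lemm13} together with a universality argument to cope with $\mathbb{Z}$-torsion; the direct componentwise computation above is however more transparent.
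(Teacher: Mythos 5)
Your proof is correct, and for the $\Nr{R}$ case it takes a genuinely different route from the paper's. The paper first proves two auxiliary lemmas — Lemma \ref{ComCF11}, which gives the explicit form $F_r(u\delta_m)=(m,r)u\delta_{m/(m,r)}$, and Lemma \ref{allTr}, which says an element of $\Nr{R}$ is determined by the family $(T_n(x))_{n\geq1}$ — and then proves the proposition by verifying it on elementary elements $u\delta_m$ and extending to general $x$ by additivity. You instead verify the identity directly, writing out the $n$-th component of each side, noting both vanish unless $n\mid s/(r,s)$, and exhibiting an equality of the two index sets $\{j:[j,r]=nr,\ j\mid s\}=\{j:[j,d]=nd\}$ (with $d=(r,s)$) via the observation $\gcd(j,r)=\gcd(j,d)$ in one direction and the parametrization $\gcd(j,d)=d/f$ with $(n,f)=1$ in the other; I checked the arithmetic and it is sound. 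Your approach is self-contained and puts all the number theory in one place, whereas the paper's modular decomposition is slightly more reusable but requires more scaffolding. Your $\Gh{R}$ computation matches the paper's in substance (the paper's displayed gcd chain looks like it has a typo, but the intended identity is the same one you prove: $(nr,s)=d\cdot(n,s/d)$).
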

Note that Proposition \ref{ComCF} holds on both $\Nr{R}$ and $\Gh{R}$. To prove Proposition \ref{ComCF}, we introduce the following notation.

\begin{defi}
For any $u\in R$ and integer $n\geq 1$, we define an element $u\delta_n$ of $\Nr{R}$ by
\begin{eqnarray*}
u\delta_n=\vect{x},\quad x_n:=\begin{cases} u & \mbox{if}\ m=n, \\ 0 & \mbox{otherwise}.\end{cases}
\end{eqnarray*}
\end{defi}

For elements $u\delta_n$, the following lemma holds.

\begin{lemm}\label{ComCF11}
For any $u\in R$ and integers $n,r \geq 1$, we have $F_r(u\delta_n)=(n,r)u\delta_{n/(n,r)}$.
\end{lemm}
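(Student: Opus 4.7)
The plan is to compute $F_r(u\delta_n)$ directly from the definition of the Frobenius operation on $\Nr{R}$, exploiting the fact that $u\delta_n$ has at most one nonzero coordinate.

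Write $F_r(u\delta_n) = \vect{z}$. By the definition recalled in \S\ref{PMPrelim},
$$z_m = \sum_{[j,r]=mr}\frac{j}{m}\,(u\delta_n)_j.$$
Since $(u\delta_n)_j = 0$ for $j \ne n$, the only term that can contribute is $j = n$, and it contributes precisely when the index $m$ satisfies $[n,r] = mr$. First I would verify that this equation pins $m$ down uniquely: using $[n,r] = nr/(n,r)$, the condition $[n,r] = mr$ is equivalent to $m = n/(n,r)$. For every other $m$, no $j$ in the summation range equals $n$, so $z_m = 0$.

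At the distinguished index $m = n/(n,r)$, the surviving coefficient is $j/m = n/(n/(n,r)) = (n,r)$, giving $z_m = (n,r)u$. Collecting this with the vanishing at all other indices yields $\vect{z} = (n,r)u\,\delta_{n/(n,r)}$, which is the claimed identity.

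There is essentially no obstacle here; the argument is a two-line unraveling of the definition of $F_r$ together with the elementary identity $[n,r](n,r) = nr$. The only minor subtlety is checking that the single-term reduction of the sum is correct, i.e., that $[n,r] = mr$ has a unique solution in $m$, which is immediate.
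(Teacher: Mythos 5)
Your argument is correct and is essentially the paper's own proof: both unwind the definition of $F_r$ on $\Nr{R}$, observe that the only possible contributing term is $j=n$, use $[n,r](n,r)=nr$ to identify the unique index $m=n/(n,r)$ with $[n,r]=mr$, and compute the surviving coefficient $n/m=(n,r)$.
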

\begin{proof}
By the definition of Frobenius operations, we have
\begin{eqnarray}\label{ComCF1}
y_k=\sum_{[j,r]=kr}\dfrac{j}{k}x_j
\end{eqnarray}
where $x=\vect{x}=u\delta_n$ and $y=\vect{y}=F_r(x)$. If $[n,r]=kr$ does not hold, then the right side of (\ref{ComCF1}) is equal to $0\in R$. If $[n,r]=kr$ holds, which implies $k=n/(n,r)$, then the right side of (\ref{ComCF1}) is equal to $(n,r)u$. Hence, this lemma holds.
\end{proof}

\begin{lemm}\label{allTr}
For any $x,y\in\Nr{R}$, we have $x=y$ if $T_r(x)=T_r(y)$ holds for any integer $r\geq 1$.
\end{lemm}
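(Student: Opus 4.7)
The plan is to exploit the fact that for any fixed index $n$, the $n$-th truncated operation $T_n$ retains the $n$-th coordinate untouched. Concretely, for any $n \geq 1$, since $n \mid n$, the definition of $T_n$ gives $T_n(x)_n = x_n$ and $T_n(y)_n = y_n$. Applying the hypothesis with $r = n$ yields $T_n(x) = T_n(y)$, so comparing $n$-th coordinates gives $x_n = y_n$.

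Since $n \geq 1$ was arbitrary, every coordinate of $x$ and $y$ agrees, and hence $x = y$ as elements of $Nr(R)$. There is no real obstacle here: the proof is a one-line consequence of the definition of $T_r$ (it simply selects from $x$ the coordinates indexed by divisors of $r$), so the key observation is that taking $r = n$ recovers the $n$-th coordinate itself. No use of the ring structure, of $\phi$, or of any properties from Proposition \ref{main4lemm00}, Proposition \ref{main4lemm01}, or Proposition \ref{lemm13} is required; the statement is purely coordinatewise.

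I would present it as a short direct argument: fix $n$, specialize $r$ to $n$, read off the $n$-th coordinate. The same proof also works for $\Gh{R}$ should that be needed, since then $T_n(x)_n = x_{(n,n)} = x_n$ by the $\Gh{R}$-definition of $T_r$.
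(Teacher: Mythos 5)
Your proof is correct and takes exactly the same route as the paper: for each $n$, apply the hypothesis with $r=n$ and note that the $n$-th coordinate of $T_n(x)$ is $x_n$ since $n\mid n$. The paper's own proof is this same three-line argument.
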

\begin{proof}
Put $x=\vect{x}$ and $y=\vect{y}$. For any integer $n\geq 1$, both of $n$-th element of $T_n(x)$ and $T_n(y)$ are $x_n$ and $y_n$, and these are equal. Hence, we have $x=y$.
\end{proof}

\begin{proof}[Proof of Proposition \ref{ComCF}]
First, we prove $F_r\circ T_s(x)=T_{s/(r,s)}\circ F_{(r,s)}(x)$ when $x=u\delta_n$ for $u\in R$ and integer $n\geq 1$. 

If $n\mid s$, then $F_r\circ T_s(u\delta_n)=(n,r)u\delta_{n/(n,r)}$ and 
\[ T_{s/(r,s)}\circ F_{(r,s)}(u\delta_n)=(n,r)T_{s/(r,s)}(u\delta_{n/(n,r)})=(n,r)u\delta_{n/(n,r)}\]
hold by Lemma \ref{ComCF11}.

If $n\nmid s$, then $F_r\circ T_s(u\delta_n)=0$ holds by the definition of truncated operations, and we have
\[T_{s/(r,s)}\circ F_{(r,s)}(u\delta_n)=(n,r)T_{s/(r,s)}(u\delta_{n/(n,r)})=0\]
by Lemma \ref{ComCF11}. Thus, we can show that this proposition holds when $x=u\delta_n$.

Next, we consider an arbitrary element $x=\vect{x}\in\Nr{R}$. For any integer $n\geq 1$, we have
\begin{eqnarray*}
T_n(F_r\circ T_s(x))&=&\sum_{d\mid n}(F_r\circ T_s)x_d\delta_d\\
&=&\sum_{d\mid n}(T_{s/(r,s)}\circ F_{(r,s)})x_d\delta_d \\
&=&T_n(T_{s/(r,s)}\circ F_{(r,s)}(x)).
\end{eqnarray*}
Thus, we have $T_n(F_r\circ T_s(x))=T_n(T_{s/(r,s)}\circ F_{(r,s)}(x))$ for any integer $n\geq 1$. By Lemma \ref{allTr}, we have $F_r\circ T_s(x)=T_{s/(r,s)}\circ F_{(r,s)}(x)$, and hence, this proposition holds in the case of $\Nr{R}$.

Finally, we prove this proposition in the case of $\Gh{R}$. For any integers $r,s,n\geq 1$, we have
\[ (s, (r,s)n)=(s,rn,sn)=(s,rn).\]
Thus, for any $a=\vect{a}\in\Gh{R}$ and integer $n\geq 1$, we have
\[b_n=a_{(s,rn)}=a_{(s,(r,s)n)}=a_{(s,rn)}=c_n\]
where $b=\vect{b}=F_r\circ T_s(a)$ and $c=\vect{c}=T_{s/(r,s)}\circ F_{(r,s)}(a)$.
Hence, this proposition holds in the case of $\Gh{R}$.
\end{proof}

\begin{prop}\label{proTV}
For any integers $r,s \geq 1$ we have 
\[ T_r\circ V_s=\begin{cases}V_s\circ T_{r/s} & \mbox{if}\ s\mid r, \\ 0 & \mbox{if}\ s\nmid r. \end{cases} \]
\end{prop}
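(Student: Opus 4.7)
The plan is to verify the identity componentwise in both ambient rings $\Nr{R}$ and $\Gh{R}$, working directly from the definitions of $V_s$ and $T_r$ given earlier.

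For $\Nr{R}$, I would take an arbitrary $x=\vect{x}$, set $y=\vect{y}=V_s(x)$, so $y_n = x_{n/s}$ when $s\mid n$ and $0$ otherwise, and then set $z=\vect{z}=T_r(y)$, so $z_n = y_n$ when $n\mid r$ and $0$ otherwise. Thus $z_n$ can be nonzero only when both $s\mid n$ and $n\mid r$, which forces $s\mid r$. In the case $s\nmid r$ this immediately gives $T_r\circ V_s(x)=0$. In the case $s\mid r$, writing $n=sm$, the conditions $s\mid n$ and $n\mid r$ become $m\mid r/s$, so $z_{sm}=x_m$ exactly when $m\mid r/s$, and the other components vanish. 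On the other side, $V_s\circ T_{r/s}(x)$ equals $x_m$ at position $sm$ when $m\mid r/s$ and vanishes elsewhere, so the two agree.

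For $\Gh{R}$, the computation is similar. Taking $a=\vect{a}$, $b=\vect{b}=V_s(a)$ satisfies $b_n=s\,a_{n/s}$ if $s\mid n$ and $0$ otherwise; hence $T_r(b)_n=b_{(n,r)}$ is nonzero only if $s\mid(n,r)$. Since $s\mid(n,r)$ forces $s\mid r$, we again obtain $T_r\circ V_s=0$ when $s\nmid r$. When $s\mid r$, the key identity is $(n,r)=s\cdot(n/s,r/s)$ whenever $s\mid n$ (and $s\mid r$), which lets me rewrite $T_r(b)_n = s\,a_{(n/s,r/s)}$ for $s\mid n$. Meanwhile $V_s\circ T_{r/s}(a)_n = s\cdot T_{r/s}(a)_{n/s} = s\,a_{(n/s,r/s)}$ for $s\mid n$, and both sides vanish when $s\nmid n$. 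Hence the two sides coincide.

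I do not anticipate a real obstacle here; the statement is essentially a bookkeeping identity. The only point requiring a small observation is the gcd identity $(n,r)=s(n/s,r/s)$ when $s$ divides both $n$ and $r$, which is standard. The case split $s\mid r$ versus $s\nmid r$ is natural because $T_r$ kills every index not dividing $r$, while the image of $V_s$ is supported on multiples of $s$, so the supports intersect nontrivially only when $s\mid r$.
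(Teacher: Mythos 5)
Your proof is correct and takes essentially the same route as the paper: a direct componentwise verification in both $\Nr{R}$ and $\Gh{R}$, with the case split driven by whether $s\mid r$. Your treatment of the $s\nmid r$ case (observing that $s\mid n$ and $n\mid r$ would force $s\mid r$ by transitivity) is a cleaner way of organizing the same computation.
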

\begin{proof}
First, we consider the case of $\Nr{R}$. Let $x=\vect{x}$ be an element of $\Nr{R}$ and let $n\geq 1$ be an integer. Put $y=\vect{y}=T_r\circ V_s(x)$.

We assume that $s\mid r$ and put $\alpha=\vect{\alpha}=V_s\circ T_{r/s}(x)$. If $s\mid n$ and $n\mid r$, then $y_n=x_{n/s}$ and $\alpha_n=x_{n/s}$ holds from $n/s\mid r/s$. If $s\nmid n$, then $y_n=0$ and $\alpha_n=0$ hold. If $n\nmid r$, then $y_n=0$ holds and $\alpha_n=0$ holds since $n/s \nmid r/s$ holds. Thus, we have $y=\alpha$, that is, one has $T_r\circ V_s=V_s\circ T_{r/s}$. 

Assume that $s\nmid r$. If $n\nmid r$, then $y_n=0$ holds. If $n\mid s$ then $y_n=0$ holds and the $(r,s)$-element of $V_r(x)$ is $0\in R$ from $r\nmid s$. Thus, this proposition holds for $\Nr{R}$.

Next, we consider the case of $\Gh{R}$. Let $a=\vect{a}$ be an element of $\Gh{R}$. Put $b=\vect{b}=V_s(a)$ and $c=\vect{c}=T_r(b)$. We assume $s\mid r$, then $s\mid (n,r)$ holds if and only if $s\mid n$. Thus, we have
\[c_n=b_{(n,r)}=a_{(n,r)/s}=a_{(n/s,r/s)}=e_n\]
where $e=\vect{e}=V_s\circ T_{r/s}(a)$. If $s\nmid n$ then $c_n=0$ holds by $s\nmid (n,r)$, and $e_n=0$ holds. If $s\nmid r$, we have $c_n=0$ by $s\nmid (n,r)$ for any $n\geq 1$.

As a result, this proposition holds for both cases.
\end{proof}

By Proposition \ref{proTV}, we have the following corollary.
\begin{cor}\label{proTV2}
Suppose that $R$ is a $\mathbb{Q}$-algebra. For any integers $r,s \geq 1$ we have 
\[ T_r\circ V'_s=\begin{cases}V'_s\circ T_{r/s} & \mbox{if}\ s\mid r, \\ 0 & \mbox{if}\ s\nmid r.\end{cases}\]
\end{cor}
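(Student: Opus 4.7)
The plan is to derive this corollary directly from Proposition \ref{proTV} by using the defining relation $V'_s = \tfrac{1}{s} V_s$, which is meaningful because $R$ is assumed to be a $\mathbb{Q}$-algebra (so both $\Nr{R}$ and $\Gh{R}$ carry $\tfrac{1}{s}$-scaling operations coming from the $R$-module structure on each coordinate). Since Proposition \ref{proTV} already handles the statement for $V_s$, this should be nothing more than dividing the identity by $s$.

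First I would observe that $T_r$ is additive in both cases (indeed, a ring homomorphism by Proposition \ref{main4lemm00}), and that on $\Gh{R}$ it acts coordinatewise while on $\Nr{R}$ it is the coordinatewise truncation, so $T_r$ commutes with multiplication by the rational scalar $\tfrac{1}{s}$. Therefore
\[ T_r \circ V'_s \;=\; T_r \circ \tfrac{1}{s} V_s \;=\; \tfrac{1}{s}\bigl(T_r \circ V_s\bigr). \]
Then I would plug in Proposition \ref{proTV}: if $s \mid r$, the right-hand side equals $\tfrac{1}{s}(V_s \circ T_{r/s}) = \tfrac{1}{s} V_s \circ T_{r/s} = V'_s \circ T_{r/s}$, where the scalar passes through $T_{r/s}$ by the same additivity reasoning; if $s \nmid r$, the right-hand side is $\tfrac{1}{s} \cdot 0 = 0$. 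This yields the two cases claimed in the corollary.

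There is no real obstacle here beyond bookkeeping: the only thing to verify is that the scalar $\tfrac{1}{s}$ commutes with $T_r$ (and with $V_s$), which is immediate from the coordinatewise description of the truncated and Verschiebung operations combined with the $\mathbb{Q}$-algebra hypothesis on $R$. Hence the corollary follows with no further computation.
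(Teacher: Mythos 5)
Your proof is correct and matches the route the paper clearly intends (the paper leaves the corollary without a written proof, treating it as an immediate consequence of Proposition \ref{proTV} via $V'_s=\tfrac{1}{s}V_s$). The only thing that needs to be said is exactly what you said: $T_r$ commutes with the rational scalar $\tfrac{1}{s}$, which is clear from its coordinatewise descriptions on both $\Nr{R}$ and $\Gh{R}$.
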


\subsection{Integer-valued characters}\label{SSInteger}

In \S \ref{SSInteger}, we discuss relations among integer-valued characters of a finite group $G$, $\lambda$-operations of $CF(G)$ and images of truncated operations.

First, we show the following result.
\begin{theo}\label{maintheorem1}
If a virtual character $\chi$ is an integer-valued character, then $\lambda^i(\chi)$ is also an integer-valued character for any integer $i\geq 0$.
\end{theo}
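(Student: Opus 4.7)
The plan is to show that $\lambda^i(\chi)(g)$ is simultaneously a rational number and an algebraic integer for every $g \in G$, and then invoke Lemma \ref{alge3} to conclude it is an integer.

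First I would observe that $\lambda^i(\chi)$ is a virtual character. Indeed, by Theorem \ref{mainly} the map $X : R(G) \to CF(G)$ is an injective pre-$\lambda$-homomorphism, so its image $\Rep{G}$ is a $\lambda$-subring of $CF(G)$ by Proposition \ref{subring}. Therefore $\lambda^i(\chi) \in \Rep{G}$ whenever $\chi \in \Rep{G}$, so $\lambda^i(\chi)$ can be written as a $\mathbb{Z}$-linear combination of irreducible characters of $G$. Since every irreducible character value $\psi(g)$ is a sum of eigenvalues of a finite-order matrix, hence a sum of roots of unity, it is an algebraic integer by Theorem \ref{alge1} and Lemma \ref{alge2}. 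Consequently $\lambda^i(\chi)(g)$ is a $\mathbb{Z}$-linear combination of algebraic integers, and thus itself an algebraic integer by Lemma \ref{alge2}.

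Next I would show rationality via the exponential formula stated just after (\ref{IntroLambda1}):
\[ \lambda_t(\chi)(g) = \exp\Big(\sum_{k=1}^{\infty} -\dfrac{\chi(g^k)}{k}(-t)^k\Big). \]
Expanding the exponential and collecting the coefficient of $t^i$ expresses $\lambda^i(\chi)(g)$ as a polynomial with rational coefficients in $\chi(g), \chi(g^2), \ldots, \chi(g^i)$ (equivalently, this is Newton's identity $i\lambda^i(\chi)(g) = \sum_{k=1}^{i}(-1)^{k-1}\lambda^{i-k}(\chi)(g)\,\chi(g^k)$ applied inductively). Since $\chi$ is an integer-valued character, each $\chi(g^k)$ lies in $\mathbb{Z}$, so $\lambda^i(\chi)(g) \in \mathbb{Q}$.

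Combining the two steps, $\lambda^i(\chi)(g)$ is a rational algebraic integer, hence an integer by Lemma \ref{alge3}. Since this holds for every $g \in G$, and $\lambda^i(\chi)$ is already known to be a virtual character, we conclude that $\lambda^i(\chi)$ is an integer-valued character. The only slightly delicate point is justifying that $\lambda^i$ applied to a \emph{virtual} (rather than genuine) character still produces a virtual character, but this is immediate from the $\lambda$-ring structure inherited by $\Rep{G}$ via Proposition \ref{subring} and Theorem \ref{mainly}; everything else is routine once the algebraic-integer/rational dichotomy is set up.
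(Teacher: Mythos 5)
Your proof is correct, but it follows a genuinely different route from the paper's. The paper first establishes (Lemma \ref{lemm12}, via a Galois-theoretic argument on $\mathbb{Q}(\omega)$) that a virtual character $\chi$ is integer-valued if and only if $\psi^n(\chi)=\psi^{(n,e)}(\chi)$ for all $n\geq 1$, where $e$ is the exponent of $G$; it then uses the fact that $\psi^n$ is a $\lambda$-homomorphism (Theorem \ref{Adams2}) to push this condition through $\lambda^i$ and invokes Lemma \ref{lemm12} a second time. You instead bypass Lemma \ref{lemm12} entirely: you show $\lambda^i(\chi)(g)$ is an algebraic integer because $\lambda^i(\chi)$ is a $\mathbb{Z}$-linear combination of irreducible characters whose values are sums of roots of unity, and you show it is rational directly from Newton's identity $i\lambda^i(\chi)(g)=\sum_{k=1}^{i}(-1)^{k-1}\lambda^{i-k}(\chi)(g)\chi(g^k)$ (equivalently the exponential formula after (\ref{IntroLambda1})), since each $\chi(g^k)\in\mathbb{Z}$; Lemma \ref{alge3} then forces the value to be an integer. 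Your argument is more elementary and self-contained for this specific theorem, and nicely makes visible the role of the algebraic-integer/rationality dichotomy; the paper's argument is less direct here but pays off downstream, since Lemma \ref{lemm12} is also the engine behind Theorem \ref{maintheorem2} and the truncated-operation characterization, giving a unified treatment. One small point worth being explicit about: the exponential formula you quote is stated in the introduction for an honest character of a representation, but it extends to virtual characters because it is just the defining relation between $\lambda_t$ and the Adams operations in the $\lambda$-ring $CF(G)$ together with $\psi^k(\chi)(g)=\chi(g^k)$; this makes the rationality step fully rigorous for virtual $\chi$.
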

To prove Theorem \ref{maintheorem1}, we characterize integer-valued characters in terms of a $\lambda$-ring. In the remainder of this section, we denote by $e$ the exponent of $G$ (see (viii) of the preface of this paper).

\begin{lemm}\label{lemm11}
For any virtual character $\chi$ and $g\in G$, there exists a polynomial $f\in\mathbb{Z}[t]$ such that $\psi^n(\chi)(g)=f(\omega^n)$ holds for any integer $n\geq 1$ where $\omega$ is a primitive $e$-th root of unity. In particular, an element $\chi(g)$ is an algebraic integer.
\end{lemm}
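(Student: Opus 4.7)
The plan is to reduce to the case of a genuine (irreducible) character via linearity, and then exploit the fact that the matrix $\rho(g)$ has finite order dividing $e$, so that its eigenvalues are $e$-th roots of unity and hence powers of $\omega$.

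First I would write the virtual character as $\chi=\sum_{j}a_{j}\chi_{j}$ with $a_{j}\in\mathbb{Z}$ and $\chi_{j}$ the irreducible characters of $G$. Fix $g\in G$ and, for each $j$, let $\rho_{j}$ be the representation affording $\chi_{j}$. Since $g^{e}=1$, the matrix $\rho_{j}(g)$ satisfies $\rho_{j}(g)^{e}=I$, so it is diagonalizable with eigenvalues that are $e$-th roots of unity. Hence for each $j$ one can choose integers $m_{j,1},\ldots,m_{j,d_{j}}$ (where $d_{j}=\deg\rho_{j}$) so that the eigenvalues of $\rho_{j}(g)$ are exactly $\omega^{m_{j,1}},\ldots,\omega^{m_{j,d_{j}}}$.

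Next I would use the definition of the Adams operation on $CF(G)$, namely $\psi^{n}(\chi_{j})(g)=\chi_{j}(g^{n})=\mathrm{Tr}(\rho_{j}(g)^{n})=\sum_{i=1}^{d_{j}}\omega^{nm_{j,i}}$. Setting
\[
f(t):=\sum_{j}a_{j}\sum_{i=1}^{d_{j}}t^{m_{j,i}}\in\mathbb{Z}[t],
\]
one obtains, for every integer $n\geq 1$,
\[
\psi^{n}(\chi)(g)=\sum_{j}a_{j}\psi^{n}(\chi_{j})(g)=\sum_{j}a_{j}\sum_{i=1}^{d_{j}}\omega^{nm_{j,i}}=f(\omega^{n}),
\]
which is the first assertion. (One should note that $f$ is a Laurent polynomial in $\omega$, but because $\omega^{e}=1$ we may multiply each monomial by a suitable power of $\omega^{e}=1$ to make all exponents non-negative, so $f$ may legitimately be taken in $\mathbb{Z}[t]$.)

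For the final statement, taking $n=1$ gives $\chi(g)=f(\omega)$. Since $\omega$ is a root of the monic polynomial $t^{e}-1\in\mathbb{Z}[t]$, Theorem \ref{alge1} shows that $\omega$ is an algebraic integer; then $\omega^{m_{j,i}}$ is an algebraic integer, and by repeated application of Lemma \ref{alge2} the $\mathbb{Z}$-linear combination $f(\omega)=\chi(g)$ is again an algebraic integer. The only real care needed is in the bookkeeping of eigenvalues and the reduction of exponents modulo $e$ so that the polynomial really lies in $\mathbb{Z}[t]$; the rest is essentially the classical argument that character values are sums of roots of unity.
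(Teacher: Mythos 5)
Your proof is correct and follows essentially the same approach as the paper: both diagonalize $\rho(g)$, write its eigenvalues as $\omega^{a_1},\ldots,\omega^{a_m}$, and take $f(t)=\sum_i t^{a_i}$, with the paper compressing the passage from virtual to genuine characters into a terse ``we may assume'' where you decompose explicitly into irreducibles. Your parenthetical worry about Laurent polynomials is unnecessary, since the exponents $m_{j,i}$ are already chosen to be non-negative integers, so $f$ lies in $\mathbb{Z}[t]$ from the start.
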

\begin{proof}
We may assume that $\chi$ is the character of a representation $\rho: G\rightarrow GL(V)$ by the definition of $R(G)$. All eigenvalues of the linear map $\rho(g): V\rightarrow V$ are $e$-th roots of unity, so the set of eigenvalues of $\rho(g)$ is written as $\omega^{a_1},\ldots,\omega^{a_m}$ with some non-negative integers $a_1,\ldots,a_m$.

We put $f(t) = t^{a_1}+\cdots+t^{a_m} \in \mathbb{Z}[t]$. Then 
\[ \psi^n(\chi)(g) = \chi(g^n) = \mathrm{Tr}(\rho(g)^n)= \omega^{na_1}+\cdots+\omega^{na_m}= f(\omega^n)\]
holds. 

The last statements of this lemma holds using Lemmas \ref{alge2} and \ref{alge3} because
 elements $\omega^{a_1},\dots,\omega^{a_m}$ are algebraic integers.
\end{proof}


By Lemma \ref{lemm11}, we have the following Lemma \ref{lemm12}. 

\begin{lemm}\label{lemm12}
A virtual character $\chi$ is an integer-valued character if and only if $\psi^n(\chi)=\psi^{(n,e)}(\chi)$ holds for any integer $n\geq 1$.
\end{lemm}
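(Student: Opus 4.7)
The plan is to translate both directions into the Galois theory of the cyclotomic field $\mathbb{Q}(\omega)$. For each $g \in G$ fix, once and for all, a polynomial $f_g \in \mathbb{Z}[t]$ with $\psi^n(\chi)(g) = f_g(\omega^n)$ for every $n \geq 1$, as provided by Lemma \ref{lemm11}. Set $\Gamma := \mathrm{Gal}(\mathbb{Q}(\omega)/\mathbb{Q}) \cong (\mathbb{Z}/e\mathbb{Z})^\times$ and, for each $a$ coprime to $e$, let $\sigma_a \in \Gamma$ denote the automorphism $\omega \mapsto \omega^a$; note that $\sigma_a(f_g(\omega^n)) = f_g(\omega^{an})$.

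The ``if'' direction is straightforward: specialising the hypothesis to $n = a$ with $(a,e) = 1$ gives $\psi^a(\chi) = \psi^{1}(\chi) = \chi$, hence $\sigma_a(\chi(g)) = f_g(\omega^a) = f_g(\omega) = \chi(g)$ for every $\sigma_a \in \Gamma$. Thus $\chi(g)$ lies in the fixed field $\mathbb{Q}$, and since it is also an algebraic integer by Lemma \ref{lemm11}, Lemma \ref{alge3} forces $\chi(g) \in \mathbb{Z}$.

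The ``only if'' direction is the main content. Fix $g \in G$, $n \geq 1$, and set $d := (n,e)$. Integer-valuedness of $\chi$ means $f_g(\omega^m) = \chi(g^m) \in \mathbb{Q}$ for every $m$, so $f_g(\omega^n)$ is $\Gamma$-invariant, giving $f_g(\omega^{an}) = f_g(\omega^n)$ for all $(a,e)=1$. It therefore suffices to produce $a$ coprime to $e$ with $an \equiv d \pmod{e}$, since then $\psi^n(\chi)(g) = f_g(\omega^n) = f_g(\omega^{an}) = f_g(\omega^d) = \psi^d(\chi)(g)$. Writing $n = (n/d)d$ and $e = (e/d)d$ with $(n/d,e/d) = 1$, this reduces to solving $a \equiv (n/d)^{-1} \pmod{e/d}$ subject to $(a,e) = 1$; such an $a$ exists because the natural reduction map $(\mathbb{Z}/e\mathbb{Z})^\times \to (\mathbb{Z}/(e/d)\mathbb{Z})^\times$ is surjective, a standard consequence of the Chinese Remainder Theorem.

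The main obstacle is this number-theoretic lifting step in the ``only if'' direction; once it is settled, both implications follow mechanically from Lemma \ref{lemm11} and the fact that $\mathbb{Q}$ is the fixed field of $\Gamma$.
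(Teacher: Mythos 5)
Your proof is correct and follows essentially the same Galois-theoretic route as the paper: both directions reduce to Lemma \ref{lemm11} and the identification of $\mathbb{Q}$ as the fixed field of cyclotomic Galois groups. The only organisational difference is in the ``only if'' direction: the paper works directly inside the subfield $\mathbb{Q}(\omega^{(n,e)})$, choosing an automorphism of that field sending $\omega^{(n,e)}\mapsto\omega^n$ (legitimate because $n/(n,e)$ is coprime to $e/(n,e)$), whereas you stay in the full field $\mathbb{Q}(\omega)$ and lift that automorphism to some $\sigma_a$ with $(a,e)=1$ via the surjectivity of $(\mathbb{Z}/e\mathbb{Z})^\times\to(\mathbb{Z}/(e/(n,e))\mathbb{Z})^\times$. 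The lifting step is standard and correct, so the two arguments are essentially interchangeable; the paper's choice avoids invoking the lifting fact at the cost of bookkeeping multiple Galois groups $\Gamma_n$.
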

\begin{proof}
In this proof, we denote a primitive $e$-th root of unity by $\omega$, and we denote the Galois group of the field extension $\mathbb{Q}(\omega^{(n,e)}):\mathbb{Q}$ by $\Gamma_{n}$ for any integer $n\geq 1$. For more detail of Galois groups, see \cite{Ste}.

First, we assume that the character $\chi$ is an integer-valued character. Let $g$ be an element of $G$. By Lemma \ref{lemm11}, there exists a polynomial $f\in\mathbb{Z}[t]$ such that $\psi^k(\chi)(g)=f(\omega^k)$ holds for any integer $k\geq 1$. By the assumption, $\psi^n(\chi)(g)$ and $\psi^{(n,e)}(\chi)(g)$ are integers. 

For any integer $n\geq 1$, let $n'$ be $n/(n,e)$. Then we have $\psi^n(\chi)(g)=f(\omega^{(n,e)n'})$ and $\psi^{(n,e)}(\chi)(g)=f(\omega^{(n,e)})$, which belong to $\mathbb{Q}(\omega^{(n,e)})$. Thus, since $(n', e/(n,e))=1$ holds there exists an element $\sigma\in\Gamma_{n}$ such that $\sigma(\omega^{(n,e)})=\omega^{(n,e)n'}$ holds. Hence we have 
\begin{eqnarray}\label{lemm121}
\psi^n(\chi)(g)=\sigma(\psi^{(n,e)}(\chi)(g))=\psi^{(n,e)}(\chi)(g).
\end{eqnarray}

Conversely, we assume that $\psi^n(\chi)=\psi^{(n,e)}(\chi)$ holds for any integer $n\geq 1$. In particular, $\psi^n(\chi)=\chi $ holds for any integer $n\geq 1$ with $(n,e)=1$. Use Lemma \ref{lemm11}, there exists a polynomial $f\in\mathbb{Z}[t]$ such that $\psi^k(\chi)(g)=f(\omega^k)$ holds for any integer $k\geq 1$. 

Let $\sigma$ be an element of $\Gamma_{e}$. Then, an integer $n$ satisfying $\sigma(\omega)=\omega^n$ is coprime to $e$. Thus, we have
\[ \sigma(\chi(g))=\sigma(f(\omega))=f(\omega^n)=\psi^n(\chi)(g)=\chi(g).\]
Then, $\chi(g)$ is a rational number, which implies that $\chi(g)$ is an integer by Lemma \ref{alge2} and Lemma \ref{lemm11}.
\end{proof}

\begin{proof}[Proof of Theorem \ref{maintheorem1}]
The element $\chi$ is a virtual character. By Proposition \ref{subring} and Theorem \ref{mainly}, elements $\lambda^i(\chi)$ are also virtual characters for all integers $i\geq 0$.
Since $CF(G)$ is a $\lambda$-ring, the $n$-th Adams operation $\psi^n$ is a $\lambda$-homomorphism by Theorem \ref{Adams2}. Let $\chi$ be an integer-valued character. By Lemma \ref{lemm12}, we have
\[ \psi^n(\lambda^i(\chi))=\lambda^i(\psi^n(\chi))=\lambda^i(\psi^{(n,e)}(\chi))=\psi^{(n,e)}(\lambda^i(\chi))\]
for any integers $i\geq 0$ and $n\geq 1$. We use Lemma \ref{lemm12} again, as a result, $\lambda^i(\chi)$ is an integer-valued character.
\end{proof}

Next, we characterize integer-valued characters with necklace rings and truncated operations.

\begin{theo}\label{maintheorem2}
A virtual character $\chi$ is an integer-valued character if and only if $E_{Nr}(\lambda_t(\chi))$ belongs to the image of the truncated operation $T_e$. In particular, if $\chi$ satisfies such conditions, we have the following finite product form of $\lambda_t(\chi)$,
\[ \lambda_t(\chi)=\prod_{d\mid e}(1-(-t)^d)^{\alpha_d}\]
where $\alpha=\vect{\alpha}=E_{Nr}(\lambda_t(\chi))$.
\end{theo}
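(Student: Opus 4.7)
The plan is to push the condition ``$E_{Nr}(\lambda_t(\chi))\in\mathrm{Im}(T_e)$'' through the chain of maps $\Lambda(CF(G))\to\Nr{CF(G)}\to\Gh{CF(G)}$ and then identify the resulting condition with a statement about Adams operations already handled by Lemma \ref{lemm12}. Throughout, $CF(G)$ is a $\mathbb{C}$-algebra, hence $\mathbb{Z}$-torsion free, so Corollary \ref{lemm131} is available.

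Writing $\alpha=E_{Nr}(\lambda_t(\chi))$ and $a=\phi(\alpha)$, Corollary \ref{lemm131} with $r=e$ gives the equivalence
\[
\alpha\in\mathrm{Im}(T_e)\iff a_n=a_{(n,e)}\ \text{for every integer}\ n\geq 1.
\]
By Theorem \ref{YauTheo} one has $\phi\circ E_{Nr}=z$, so $a_n$ is the $n$-th component of $z(\lambda_t(\chi))$. Comparing the defining formulas of $z$ and of the Adams operation $\psi^n$ on the $\lambda$-ring $CF(G)$ shows $a_n=\psi^n(\chi)$. The condition therefore translates to $\psi^n(\chi)=\psi^{(n,e)}(\chi)$ for every $n\geq 1$, which by Lemma \ref{lemm12} is precisely the condition that $\chi$ is integer-valued. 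This proves the stated iff.

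For the ``in particular'' conclusion, Lemma \ref{ENRdefine} provides the unique factorization $\lambda_t(\chi)=\prod_{i=1}^{\infty}(1-(-t)^i)^{\alpha_i}$ with $\vect{\alpha}=E_{Nr}(\lambda_t(\chi))$. Under the integer-valued hypothesis, $\alpha\in\mathrm{Im}(T_e)$, so Proposition \ref{ImageTr}(1) forces $\alpha_n=0$ whenever $n\nmid e$, collapsing the product to $\prod_{d\mid e}(1-(-t)^d)^{\alpha_d}$.

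The only piece that does any real work is the identification $z(\lambda_t(\chi))_n=\psi^n(\chi)$, which is immediate from the definitions of the two objects. Beyond that, the argument is pure bookkeeping: Corollary \ref{lemm131} converts membership in $\mathrm{Im}(T_e)$ into the Adams-operation condition, and Lemma \ref{lemm12} converts that condition into the integer-valued property. I do not foresee a substantive obstacle; the theorem is essentially a restatement of Lemma \ref{lemm12} in the language of necklace rings and truncated operations.
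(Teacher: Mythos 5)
Your argument is correct and follows essentially the same path as the paper's own proof: both reduce membership in $\mathrm{Im}(T_e)$ to the condition $\psi^n(\chi)=\psi^{(n,e)}(\chi)$ via Corollary \ref{lemm131}, the identity $z=\phi\circ E_{Nr}$, and the observation that the $n$-th component of $z(\lambda_t(\chi))$ is $\psi^n(\chi)$, then invoke Lemma \ref{lemm12}. Your explicit treatment of the ``in particular'' clause via Lemma \ref{ENRdefine} and Proposition \ref{ImageTr}(1) is a slightly fuller version of what the paper leaves implicit, but it is the same argument.
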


\begin{proof}
By Lemma \ref{lemm12}, a virtual character $\chi$ is an integer-valued character if and only if $\psi^n(\chi)=\psi^{(n,e)}(\chi)$ holds for any integer $n\geq 1$. The map $\phi:\Nr{CF(G)}\rightarrow \Gh{CF(G)}$ preserves the truncated operation $T_e$ by Proposition \ref{lemm13}, and the $n$-th element of $\phi(\alpha)=\phi(E_{Nr}(\lambda_t(\chi))=z(\lambda_t(\chi))$ is $\psi^n(\chi)$ for any $n\geq 1$. Thus, $\psi^n(\chi)=\psi^{(n,e)}(\chi)$ holds for any integer $n\geq 1$ if and only if $\alpha_n=0$ holds for any integer $n\geq 1$ with $n\mid e$ by Corollary \ref{lemm131}. Hence, the element $\alpha$ belongs to the image of the operation $T_e$ by Proposition \ref{ImageTr}.
\end{proof}

Let $\chi$ be a virtual character and $\alpha=\vect{\alpha}=E_{Nr}(\lambda_t(\chi))$. In Theorem \ref{maintheorem2} we saw that $\alpha_n=0$ holds for any integer $n\geq 1$ with $n\nmid e$ if $\chi$ is an integer-valued character. 

Next, we fix an element $g\in G$ satisfying that $\chi(g^k)$ is an integer for any integer $k\geq 1$, and study a power series $\lambda_t(\chi)(g)$. Put $\vect{\beta}=\Nr{\Eva{G}{g}}(\alpha)$, and we show that the number of integers $n\geq 1$ such that $\beta_n\neq 0$ is equal to, or less than the number of divisors $d$ of $e$ by the following theorem.

\begin{theo}\label{maintheorem3}
Let $\chi$ be a virtual character and we denote $\alpha=E_{Nr}(\lambda_t(\chi))$. For any $g\in G$ satisfying that $\chi(g^k)$ is an integer for any integer $k\geq 1$, all elements of $\Nr{\Eva{G}{g}}(\alpha)$ are integers and $\Nr{\Eva{G}{g}}(\alpha)$ belongs to the image of $T_{O(g)}$. In particular, the power series $\lambda_t(\chi)(g)$ has the following form,
\[ \lambda_t(\chi)(g)=\prod_{d\mid O(g)}(1-(-t)^d)^{\beta_d}\]
where $\vect{\beta}=\Nr{\Eva{G}{g}}(\alpha)$.
\end{theo}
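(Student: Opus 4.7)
My plan is to reduce to the cyclic subgroup $H=\langle g\rangle$, which has exponent $O(g)$, and invoke Theorems \ref{maintheorem1} and \ref{maintheorem2} applied to $H$. Restriction $\mathrm{Res}\colon CF(G)\to CF(H)$ is a ring homomorphism that commutes with every Adams operation (since $\mathrm{Res}(\psi^n f)(h)=f(h^n)=\psi^n(\mathrm{Res}\,f)(h)$), and $CF(H)$ is $\mathbb{Z}$-torsion free, so $\mathrm{Res}$ is a $\lambda$-homomorphism and in particular sends virtual characters to virtual characters. Hence $\chi|_H:=\mathrm{Res}(\chi)$ is a virtual character of $H$; because every element of $H$ is a power of $g$, the hypothesis $\chi(g^k)\in\mathbb{Z}$ for all $k\geq 1$ says exactly that $\chi|_H$ is integer-valued.

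For the $T_{O(g)}$-image claim, apply Theorem \ref{maintheorem2} to $(H,\chi|_H)$: the element $\alpha':=E_{Nr}(\lambda_t(\chi|_H))$ lies in the image of $T_{O(g)}$ in $\Nr{CF(H)}$. The operation $\Nr{\Eva{H}{g}}\colon\Nr{CF(H)}\to\Nr{\mathbb{C}}$ is defined componentwise from the ring homomorphism $\Eva{H}{g}$, so it commutes with $T_{O(g)}$, and hence $\Nr{\Eva{H}{g}}(\alpha')$ lies in the image of $T_{O(g)}$ in $\Nr{\mathbb{C}}$. By Proposition \ref{triplecomm}(2), $\Nr{\Eva{H}{g}}(\alpha')=E_{Nr}(\Lambda(\Eva{H}{g})(\lambda_t(\chi|_H)))$, and the identity $\lambda^i(\chi|_H)(g)=\lambda^i(\chi)(g)$ (a consequence of $\lambda^i\circ\mathrm{Res}=\mathrm{Res}\circ\lambda^i$) gives $\Lambda(\Eva{H}{g})(\lambda_t(\chi|_H))=\lambda_t(\chi)(g)=\Lambda(\Eva{G}{g})(\lambda_t(\chi))$. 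A second application of Proposition \ref{triplecomm}(2) therefore yields $\Nr{\Eva{H}{g}}(\alpha')=\Nr{\Eva{G}{g}}(\alpha)$, so $\Nr{\Eva{G}{g}}(\alpha)$ lies in the image of $T_{O(g)}$.

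For the integer-valuedness of the entries of $\vect{\beta}:=\Nr{\Eva{G}{g}}(\alpha)$, apply Theorem \ref{maintheorem1} to $(H,\chi|_H)$: every $\lambda^i(\chi|_H)$ is integer-valued, hence $\lambda^i(\chi)(g)=\lambda^i(\chi|_H)(g)\in\mathbb{Z}$ for all $i\geq 0$, and so $\lambda_t(\chi)(g)\in\Lambda(\mathbb{Z})$, viewed inside $\Lambda(\mathbb{C})$ via the inclusion $\iota\colon\mathbb{Z}\hookrightarrow\mathbb{C}$. Both $\mathbb{Z}$ and $\mathbb{C}$ being binomial rings, Proposition \ref{triplecomm}(2) applied to $\iota$ yields $\vect{\beta}=E_{Nr}(\lambda_t(\chi)(g))\in\Nr{\iota}(\Nr{\mathbb{Z}})$, i.e.\ $\beta_n\in\mathbb{Z}$ for every $n$. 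The finite product form is then immediate from the definition of $E_{Nr}$ in Lemma \ref{ENRdefine} together with $\beta_d=0$ for $d\nmid O(g)$.

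The main obstacle is purely bookkeeping: one has to keep straight how $E_{Nr}$, $\lambda_t$, and $T_{O(g)}$ interact with the three ring homomorphisms $\mathrm{Res}\colon CF(G)\to CF(H)$, $\Eva{H}{g}\colon CF(H)\to\mathbb{C}$, and $\iota\colon\mathbb{Z}\hookrightarrow\mathbb{C}$. Each individual compatibility is already available (Proposition \ref{triplecomm}, the componentwise naturality of $T_r$, and the fact that restriction preserves characters of exterior powers), so the proof amounts to composing them in the correct order.
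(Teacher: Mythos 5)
Your proof is correct and follows essentially the same route as the paper's: restrict to the cyclic subgroup $H=\langle g\rangle$ (using Lemma~\ref{lemm3-1} that $\mathrm{Res}^G_H$ is a $\lambda$-homomorphism), observe that $\chi|_H$ is integer-valued, apply Theorem~\ref{maintheorem1} to get $\lambda_t(\chi)(g)\in\Lambda(\mathbb{Z})$ and Theorem~\ref{maintheorem2} to $H$ (exponent $O(g)$) for the $T_{O(g)}$-image claim, then transport everything through $E_{Nr}$ via Proposition~\ref{triplecomm}. You merely spell out the compatibility bookkeeping more explicitly than the paper does, which even glosses over the fact that its $\Nr{\Eva{G}{g}}\circ\Nr{\Res{G}{\circg{g}}}$ should really be $\Nr{\Eva{\circg{g}}{g}}\circ\Nr{\Res{G}{\circg{g}}}$ — a point your use of $\Eva{H}{g}$ handles cleanly.
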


To prove Theorem \ref{maintheorem3}, we show the following lemma.
\begin{lemm}\label{lemm3-1}
For any subgroups $H$ of $G$, the restriction map $\mathrm{Res}^G_H: CF(G)\rightarrow CF(H)$ is a $\lambda$-homomorphism.
\end{lemm}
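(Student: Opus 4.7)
The plan is to apply the criterion stated after Proposition \ref{Adams1} (Yau's \cite[Corollary 3.16]{Yau}): a ring homomorphism $f:R\to S$ between $\lambda$-rings is a $\lambda$-homomorphism whenever $f\circ\psi^n=\psi^n\circ f$ for all $n\geq 1$ and $S$ is $\mathbb{Z}$-torsion free. Since $CF(H)$ is a $\mathbb{C}$-algebra, it is automatically $\mathbb{Z}$-torsion free, so we only need two things: that $\mathrm{Res}^G_H$ is a ring homomorphism, and that it commutes with every Adams operation.

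First, I would note that $\mathrm{Res}^G_H$ is obviously a ring homomorphism, because addition and multiplication on both $CF(G)$ and $CF(H)$ are defined pointwise, and the unit element is the constant function $1$, whose restriction to $H$ is the unit of $CF(H)$. (We should also observe that if $f$ is constant on $G$-conjugacy classes, its restriction to $H$ is constant on $H$-conjugacy classes, since $H$-conjugacy is a refinement of $G$-conjugacy.)

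Second, I would verify the commutation with Adams operations directly from the definition $\psi^n(f)(g)=f(g^n)$ recalled in \S 2.3. For $f\in CF(G)$, $h\in H$ and $n\geq 1$, the point is that $h^n\in H$ (since $H$ is a subgroup), so
\begin{eqnarray*}
\bigl(\mathrm{Res}^G_H\circ\psi^n\bigr)(f)(h) &=& \psi^n(f)(h) \;=\; f(h^n), \\
\bigl(\psi^n\circ\mathrm{Res}^G_H\bigr)(f)(h) &=& \psi^n(\mathrm{Res}^G_H(f))(h) \;=\; \mathrm{Res}^G_H(f)(h^n) \;=\; f(h^n).
\end{eqnarray*}
Hence $\mathrm{Res}^G_H\circ\psi^n=\psi^n\circ\mathrm{Res}^G_H$ for every $n\geq 1$.

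Combining these two facts with the cited criterion, we conclude that $\mathrm{Res}^G_H$ is a $\lambda$-homomorphism. There is no real obstacle here; the argument is essentially bookkeeping, the only conceptual input being that the $\lambda$-ring structures on $CF(G)$ and $CF(H)$ are both built from the same uniform formula for $\psi^n$ via Theorem \ref{LambdaGh}, which makes commutation with restriction tautological.
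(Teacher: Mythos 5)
Your proof is correct and takes essentially the same approach as the paper: verify $\mathrm{Res}^G_H\circ\psi^n=\psi^n\circ\mathrm{Res}^G_H$ on each $h\in H$ using $\psi^n(f)(g)=f(g^n)$, then invoke the $\mathbb{Z}$-torsion-freeness criterion (Yau's Corollary 3.16) to upgrade to a $\lambda$-homomorphism. If anything, you correctly pin down that the relevant criterion is the unnumbered proposition quoting \cite[Corollary 3.16]{Yau}, whereas the paper's proof refers to Proposition \ref{Adams1} (the converse direction), which appears to be a mis-citation.
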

\begin{proof}
By Proposition \ref{Adams1} and that $CF(H)$ is $\mathbb{Z}$-torsion free, we show $\psi^n\circ\Res{G}{H}=\Res{G}{H}\circ\psi^n$ for any integer $n\geq 1$. For any $\chi\in CF(G)$ and $h\in H$, we have
\[\psi^n(\Res{G}{H}(\chi))(h)=\chi(h^n)=\Res{G}{H}(\psi^n(\chi))(h). \]
Hence, this lemma holds.
\end{proof}

\begin{proof}[Proof of Theorem \ref{maintheorem3}]
By the assumption, $\Res{G}{\circg{g}}(\chi)$ is an integer-valued character of $\circg{g}$. Thus, $\lambda_t(\chi)(g)$ belongs to $\Lambda(\mathbb{Z})$ by Lemma \ref{lemm3-1} and Theorem \ref{maintheorem1}. The set of all integers $\mathbb{Z}$ is a binomial ring with the binomial coefficient (\cite{Yau} \S5.1). Hence, all elements of $\Nr{\Eva{G}{g}}(\alpha)$ are integers. 

Next, we consider $\Nr{\Eva{G}{g}}(\alpha)$. One has
\[ \Nr{\Eva{G}{g}}(\alpha)=\Nr{\Eva{G}{g}}\circ \Nr{\Res{G}{\circg{g}}}(\alpha)=\Nr{\Eva{G}{g}}\circ E_{Nr}(\lambda_t({\Res{G}{\circg{g}}}(\chi))).\]
The exponent of a cyclic subgroup $\circg{g}$ is $O(g)$. Hence $\Nr{\Eva{G}{g}}(\alpha)$ belongs to the image of $T_{O(g)}$.
\end{proof}

\subsection{Finite support elements of a necklace ring}\label{SSFinitesupport}
Let $R$ be a commutative ring. First, we define the following sets and properties associate with elements of $\Nr{R}$ and $\Gh{R}$.

\begin{defi}
For each $x=\vect{x}\in\Nr{R}$, we define $\supp(x)$ by the set of integers $n\geq 1$ such that $x_n\neq 0$ holds. If $\supp(x)$ is finite, then we call that $x$ has the finite support. 
\end{defi}

\begin{lemm}\label{supplemm}
An element of the image of $T_r$ has the finite support for any integer $r\geq 1$.
\end{lemm}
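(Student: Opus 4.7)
The plan is to reduce the statement immediately to Proposition \ref{ImageTr}(1), which gives a clean characterization of the image of $T_r$ in $\Nr{R}$.

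More precisely, suppose $x = \vect{x} \in \Nr{R}$ lies in the image of $T_r$. By Proposition \ref{ImageTr}(1), this is equivalent to the condition that $x_n = 0$ for every integer $n \geq 1$ with $n \nmid r$. Consequently, $\supp(x) \subseteq \{n \geq 1 : n \mid r\}$, and since the set of positive divisors of $r$ is finite, $\supp(x)$ is finite.

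There is essentially no obstacle here; the substance of the argument is entirely packaged in Proposition \ref{ImageTr}(1), and the lemma amounts to observing that the set of divisors of a fixed positive integer is finite. The only thing to be mildly careful about is that the definition of $\supp$ is given for elements of $\Nr{R}$, so the statement should be read as referring to the truncated operation on $\Nr{R}$ rather than on $\Gh{R}$ (where the finite-support conclusion would generally fail, since $T_r(a)_n = a_{(n,r)}$ need not vanish for large $n$).
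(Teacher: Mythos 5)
Your proof is correct and follows the same route as the paper: the image of $T_r$ consists precisely of elements supported on divisors of $r$, and there are only finitely many such divisors. The explicit appeal to Proposition \ref{ImageTr}(1) and the remark about $\Nr{R}$ versus $\Gh{R}$ are sensible clarifications but do not change the substance of the argument.
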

\begin{proof}
Let $x$ be an element of the image of $T_r$. Then, the set $\supp(x)$ is contained the set of all divisor of $r$. In particular, the set $\supp(x)$ is finite.
\end{proof}

\begin{defi}
Let $c\geq 1$ be an integer. An element $a=\vect{a}\in\Gh{R}$ has the $c$-cycle if $a_i=a_j$ holds for any integers $i\geq j \geq 1$ with $c\mid i-j$.
\end{defi}
All elements of the image of the truncated operation $T_r$ has the $r$-cycle. As another example, all elements of $z\circ\lambda_t(CF(G))$ have the $e$-cycle by the definition of Adams operation of $CF(G)$ (see \S 2.3), where $e$ is the exponent of a finite group $G$.

Let $G$ be a finite group, let $\chi$ be a virtual character of $G$, and put $\alpha=\vect{\alpha}=E_{Nr}(\lambda_t(\chi))$. In \S \ref{SSInteger}, we prove that if a virtual character $\chi$ is an integer-valued character, then $\alpha_n=0$ holds for any integer $n\geq 1$ satisfying that $n$ does not divide the exponent of $G$. In particular, the element $E_{Nr}(\lambda_t(\chi))$ has the finite support.

In \S\ref{SSFinitesupport}, we show that if $E_{Nr}(\lambda_t(\chi))$ has the finite support, then $\chi$ is an integer-valued character by the following theorem.
\begin{theo}\label{main4theo3}
Suppose that $R$ is $\mathbb{Z}$-torsion free. For any element $x\in\Nr{R}$ satisfying that $\phi(x)$ has the $c$-cycle, the element $x$ has the finite support if and only if $x$ belongs to the image of $T_c$. 
\end{theo}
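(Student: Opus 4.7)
The $(\Leftarrow)$ implication is immediate from Lemma \ref{supplemm}. For $(\Rightarrow)$, suppose $x\in\Nr{R}$ has finite support and $a:=\phi(x)$ has the $c$-cycle. By Proposition \ref{ImageTr}(1) I must show $x_n=0$ for every integer $n\geq 1$ with $n\nmid c$, which I prove by strong induction on $n$. Set $N:=\max\supp(x)$. The base case $n=1$ is vacuous (since $1\mid c$), and if $n>N$ then $x_n=0$ holds trivially, so the content of the argument is confined to $1<n\leq N$ with $n\nmid c$.

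For the inductive step, fix such an $n$, and assume $x_m=0$ for every $m<n$ satisfying $m\nmid c$. Put $d:=(n,c)$, and write $n=dn'$, $c=dc'$ with $(n',c')=1$; note $d<n$ because $n\nmid c$. Since $(n',c')=1$, the residue $(n')^{-1}\bmod c'$ is coprime to $c'$, so by Dirichlet's theorem on primes in arithmetic progressions I can choose a prime $p$ with $p>N$ and $p\equiv (n')^{-1}\pmod{c'}$. Such a $p$ automatically satisfies $(p,n)=1$ (since $p>n$) and $pn\equiv d\pmod c$ (since $pn=d\cdot pn'$ and $pn'\equiv 1\pmod{c'}$).

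I now evaluate $a_{pn}$ in two ways. First, because $p$ is prime and coprime to $n$, the divisors of $pn$ are exactly $\{m,\,pm:m\mid n\}$, and since $p>N$ every divisor of the form $pm$ exceeds $N$; hence in the defining sum $a_{pn}=\sum_{m\mid pn}m\,x_m$ only indices $m\mid n$ contribute nontrivially. The induction hypothesis kills $x_m$ for $m<n$ with $m\nmid c$, so the surviving indices are precisely $\{m:m\mid d\}$ together with $m=n$ (not in the first set, as $n\nmid c$); this yields $a_{pn}=a_d+n\,x_n$. Second, the $c$-cycle hypothesis together with $pn\equiv d\pmod c$ gives $a_{pn}=a_d$. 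Comparing, $n\,x_n=0$, and the $\mathbb{Z}$-torsion freeness of $R$ forces $x_n=0$, completing the induction.

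The main obstacle is the single choice of $p$: it must simultaneously make the divisor structure of $pn$ above $N$ trivial (so that $a_{pn}$ collapses to a sum over divisors of $n$, which the induction then reduces to $a_d+n\,x_n$) and control the residue $pn\bmod c$ so that $c$-periodicity identifies $a_{pn}$ with the already-handled value $a_d$. Dirichlet's theorem is what makes this simultaneous choice available; the rest of the argument is bookkeeping about divisors and an application of $\mathbb{Z}$-torsion freeness.
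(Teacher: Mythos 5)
Your proof is correct, and it takes a genuinely different route from the paper's. The paper proves this by decomposing $c=p_1^{e_1}\cdots p_r^{e_r}$, applying the Frobenius operations $F_{p_i^{e_i}}$ (which preserve finite support by Lemma \ref{main4lemm3} and shrink the cycle by Lemma \ref{main4lemm2}), and then invoking Lemma \ref{main4lemm5} — which is itself based on a Bezout-style choice of $s$ with $\phi(x)=F_s(\phi(x))$ — to conclude $d_{c_i}(\supp(F_{p_i^{e_i}}(x)))=\{1\}$, from which $x_n=0$ follows by extracting $x_n$ as the $n/p_k^{e_k}$-th coordinate of $F_{p_k^{e_k}}(x)$. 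You instead run a direct strong induction on $n$, choosing an auxiliary prime $p>N$ with $p\equiv (n')^{-1}\pmod{c'}$ so that $a_{pn}$ collapses to $a_d+n\,x_n$ on the $\phi$-side while $c$-periodicity gives $a_{pn}=a_d$, and $\mathbb{Z}$-torsion freeness finishes the step. The trade-off is that your argument is shorter and avoids the whole Frobenius/$d_c$ apparatus, but it invokes Dirichlet's theorem on primes in arithmetic progressions, a substantially heavier number-theoretic input than the elementary Bezout identity the paper relies on; the paper's argument, while longer, is self-contained within elementary arithmetic and the necklace-ring operations it has already developed. One small presentational point: you should handle $\supp(x)=\emptyset$ (i.e.\ $x=0$) separately before defining $N=\max\supp(x)$, though that case is trivial.
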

By Theorem \ref{main4theo3} as $R=CF(G)$ and Theorem \ref{maintheorem2}, we obtain the following corollary.
\begin{cor}\label{main4theo33}
A virtual character $\chi$ is an integer-valued character if and only if $E_{Nr}(\lambda_t(\chi))$ has the finite support.
\end{cor}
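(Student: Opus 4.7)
The plan is to combine Theorem~\ref{maintheorem2} with Theorem~\ref{main4theo3}, applied to the commutative ring $R=CF(G)$ and the integer $c=e$ (the exponent of $G$). Note that $CF(G)$ is a $\mathbb{C}$-algebra and hence $\mathbb{Z}$-torsion free, so Theorem~\ref{main4theo3} is available.

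For the forward direction, suppose $\chi$ is integer-valued. By Theorem~\ref{maintheorem2}, the element $E_{Nr}(\lambda_t(\chi))$ lies in the image of the truncated operation $T_e$. Lemma~\ref{supplemm} immediately gives that this image consists only of elements with finite support, so $E_{Nr}(\lambda_t(\chi))$ has finite support.

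For the converse, assume $E_{Nr}(\lambda_t(\chi))$ has finite support. To apply Theorem~\ref{main4theo3} with $c=e$ we need to verify that $\phi(E_{Nr}(\lambda_t(\chi)))$ has the $e$-cycle. By Theorem~\ref{YauTheo} we have $\phi\circ E_{Nr}=z$, and as noted in the proof of Theorem~\ref{maintheorem2}, the $n$-th component of $z(\lambda_t(\chi))$ equals $\psi^n(\chi)$. Because the Adams operation on $CF(G)$ is given by $\psi^n(\chi)(g)=\chi(g^n)$ and every $g\in G$ satisfies $g^e=1$, we have $\psi^i(\chi)(g)=\chi(g^i)=\chi(g^j)=\psi^j(\chi)(g)$ whenever $e\mid i-j$, so $z(\lambda_t(\chi))$ does have the $e$-cycle. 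Theorem~\ref{main4theo3} then yields that $E_{Nr}(\lambda_t(\chi))$ belongs to the image of $T_e$, and Theorem~\ref{maintheorem2} concludes that $\chi$ is an integer-valued character.

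There is no real obstacle here beyond assembling the pieces: the corollary is essentially the combination of the characterization ``integer-valued $\Leftrightarrow$ in image of $T_e$'' (Theorem~\ref{maintheorem2}) with the equivalence ``finite support $\Leftrightarrow$ in image of $T_c$'' under the $c$-cycle hypothesis on $\phi(x)$ (Theorem~\ref{main4theo3}). The only point requiring a brief check is the $e$-cycle property of $z(\lambda_t(\chi))$, which follows from the formula for the Adams operations on $CF(G)$ and the definition of the exponent.
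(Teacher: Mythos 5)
Your proposal is correct and follows exactly the route the paper intends: combine Theorem~\ref{maintheorem2} with Theorem~\ref{main4theo3} applied to $R=CF(G)$ and $c=e$. The paper states this as a one-line citation of those two theorems, with the $e$-cycle property of $z\circ\lambda_t(CF(G))$ noted in the text preceding Theorem~\ref{main4theo3}; you have simply written out the same verification explicitly.
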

 
First, we state the following lemma.

\begin{lemm}\label{main4lemm3}
If an element of $x \in\Nr{R}$ has the finite support, then $F_r(x)$ also has the finite support for any integer $r\geq 1$.
\end{lemm}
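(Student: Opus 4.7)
The plan is to read off the finite support of $F_r(x)$ directly from the defining formula
\[ z_n = \sum_{[j,r]=nr}\dfrac{j}{n}x_j, \qquad z=\vect{z}=F_r(x), \]
together with the hypothesis that $\supp(x)$ is finite. The key observation is that each index $j$ appearing with a nonzero coefficient in the above sum must satisfy both $j\in\supp(x)$ and $[j,r]=nr$. For any fixed $j$ the second condition determines $n$ uniquely, namely $n=[j,r]/r$, which is a positive integer because $r\mid [j,r]$.

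From this, I would argue that
\[ \supp(F_r(x))\;\subseteq\;\bigl\{\,[j,r]/r \;\big|\; j\in\supp(x)\,\bigr\}. \]
Indeed, if $z_n\neq 0$, then at least one summand on the right-hand side is nonzero, forcing the existence of some $j\in\supp(x)$ with $[j,r]=nr$, so that $n$ lies in the displayed finite set. Since $\supp(x)$ is assumed to be finite, so is the image set on the right, and therefore $\supp(F_r(x))$ is finite as required.

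There is essentially no obstacle here beyond being careful about the bookkeeping: the map $j\mapsto [j,r]/r$ need not be injective (several $j$'s may produce the same $n$), but that is irrelevant for the finiteness bound, which only requires that each $n$ in $\supp(F_r(x))$ arises from some $j\in\supp(x)$. No cancellation issues arise, because we are not claiming an exact description of $\supp(F_r(x))$, only a containment in a finite set; this is why the hypothesis on $R$ (e.g., being $\mathbb{Z}$-torsion free) is not needed for this lemma.
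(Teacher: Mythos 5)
Your proof is correct and follows essentially the same approach as the paper: both arguments observe that $y_m\neq 0$ forces some $j\in\supp(x)$ with $[j,r]=mr$. The only cosmetic difference is in the finishing step — you note that $m$ is determined by $j$ as $m=[j,r]/r$, yielding the explicit containment $\supp(F_r(x))\subseteq\{[j,r]/r : j\in\supp(x)\}$, while the paper instead derives the bound $m\le j\le\max\supp(x)$; both immediately give finiteness, and your remark that no torsion-freeness hypothesis is needed is accurate.
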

\begin{proof}
Put $y=F_r(x)$. We prove that if $m \in \supp(y)$, then there exists integer $j$ such that $j \geq m$ and $j \in \supp(x)$ hold.

By the definition of Frobenius operation $F_r$, we have
\[ y_m=\sum_{[j,r]=mr}\dfrac{j}{m}x_j.\]
Since $y_m \neq 0$ holds, there exists an integer $j$ such that $[j,r]=mr$ and $j \in \supp(x)$ hold. Hence, we have $mr =[j,r]\leq jr$, and then $m \leq j$.
\end{proof}

\begin{lemm}\label{main4lemm2}
If an element $a\in\Gh{R}$ has the $c$-cycle, then $F_r(a)$ has the $c/(c,r)$-cycle for any integer $r\geq 1$.
\end{lemm}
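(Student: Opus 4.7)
The plan is to argue directly from the definition of the Frobenius operation on $\Gh{R}$, reducing the claim to a single elementary divisibility fact about least common multiples. Recall that $F_r(a)$ is the vector $\vect{b}$ defined by $b_n = a_{nr}$ for every $n \geq 1$. To prove that $F_r(a)$ has the $c/(c,r)$-cycle, I need to verify that $b_i = b_j$ whenever $i \geq j \geq 1$ and $c/(c,r)$ divides $i-j$; in terms of $a$, this means showing $a_{ir} = a_{jr}$.

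Since $a$ is assumed to have the $c$-cycle, it is enough to establish that $c$ divides $ir - jr = r(i-j)$ under the hypothesis $c/(c,r) \mid i-j$. The key arithmetic observation is the standard identity
\[
r \cdot \frac{c}{(c,r)} \;=\; [r,c],
\]
which is in particular a multiple of $c$. Multiplying the assumed divisibility $c/(c,r) \mid i-j$ through by $r$ gives $rc/(c,r) \mid r(i-j)$, and since $c \mid rc/(c,r)$, we conclude $c \mid r(i-j)$, as required.

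There is no real obstacle here: the whole proof is a two-line unwinding of definitions followed by the $\mathrm{lcm}$ identity above. The only place one has to be slightly careful is to ensure that one really gets the sharp constant $c/(c,r)$, and not a larger multiple of it; this is why the identity $r \cdot c/(c,r) = [r,c]$ is used, rather than the weaker $c \mid rc$.
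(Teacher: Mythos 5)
Your proof is correct and is essentially the same argument as the paper's: unwind $b_n = a_{nr}$, then reduce to the divisibility $c \mid r(i-j)$ given $c/(c,r)\mid i-j$, which both you and the paper justify by observing that $r\cdot c/(c,r) = [r,c]$ is a multiple of $c$.
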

\begin{proof}
Put $b=\vect{b}=F_r(a)$. For any integers $k_1, k_2\geq1$, we have
\begin{eqnarray*}
b_{(k_1c/(c,r))+k_2}=a_{(k_1rc/(c,r))+k_2r}=a_{k_2r}=b_{k_2}.
\end{eqnarray*}
Hence, the element $b$ has the $c/(c,r)$-cycle.
\end{proof}

\begin{lemm}\label{main4lemm4}
Let $r\geq 1$ be an integer with $(c,r)=1$. If an element $a\in \Gh{R}$ has the $c$-cycle, then there exists an integer $s\geq 1$ such that $r\mid s $ and $a=F_{s}(a)$ holds.
\end{lemm}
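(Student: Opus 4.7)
The plan is to reduce the condition $F_s(a) = a$ to a simple congruence on $s$, and then use the hypothesis $(c,r) = 1$ to find $s$ satisfying that congruence together with $r \mid s$.

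First, I would unpack the definition of $F_s$: writing $a = \vect{a}$, the $n$-th entry of $F_s(a)$ is $a_{ns}$, so $F_s(a) = a$ is equivalent to $a_{ns} = a_n$ for every integer $n \geq 1$. Because $a$ has the $c$-cycle, the equality $a_{ns} = a_n$ holds provided $c \mid ns - n = n(s-1)$. Taking $n = 1$ shows that the required congruence reduces simply to $c \mid s - 1$, i.e.\ $s \equiv 1 \pmod c$; conversely, if $s \equiv 1 \pmod c$ then $c \mid n(s-1)$ for every $n \geq 1$, and the $c$-cycle property gives $a_{ns} = a_n$ for all $n$.

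Thus the lemma reduces to finding a positive integer $s$ with $r \mid s$ and $s \equiv 1 \pmod c$. The hypothesis $(c, r) = 1$ makes this immediate: either invoke the Chinese Remainder Theorem on the moduli $r$ and $c$ to solve $s \equiv 0 \pmod r$ and $s \equiv 1 \pmod c$ simultaneously, or more concretely take $s = r^{\varphi(c)}$ (with $\varphi$ Euler's totient), which is clearly divisible by $r$ and satisfies $r^{\varphi(c)} \equiv 1 \pmod c$ by Euler's theorem when $c \geq 2$ (the case $c = 1$ is trivial since every $a$ is then constant and $F_s(a) = a$ for every $s$, in particular $s = r$).

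There is no real obstacle here; the only small point to verify carefully is that the $c$-cycle condition really is equivalent to invariance of $a$ under translation by any multiple of $c$, so that $c \mid s - 1$ suffices to conclude $a_{ns} = a_n$ for \emph{all} $n$, not merely for $n$ in some residue class. After that, the number-theoretic construction of $s$ is essentially a one-line application of coprimality.
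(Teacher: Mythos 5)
Your proof is correct and follows essentially the same strategy as the paper: reduce the condition $F_s(a)=a$ to finding $s\geq 1$ with $r\mid s$ and $s\equiv 1\pmod c$ (so that $c\mid n(s-1)$ for all $n$, allowing the $c$-cycle property to give $a_{ns}=a_n$), and then produce such an $s$ from the coprimality $(c,r)=1$. The only difference is in how $s$ is produced: the paper uses B\'ezout, writing $pc+qr=1$ with $p<0,\ q>0$ and taking $s=qr=1-pc$, whereas you invoke the Chinese Remainder Theorem or $s=r^{\varphi(c)}$ with Euler's theorem; both give a positive multiple of $r$ congruent to $1$ modulo $c$, so the arguments are equivalent.
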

\begin{proof}
By $(c,r)=1$, there exist integers $p<0$ and $q>0$ such that $pc+br=1$ holds. 

We put $s=qr$. Then $r\mid s$. If $a=\vect{a}$ and $b=\vect{b}=F_s(a)$, then
\[b_n = a_{sn} = a_{(1-pc)n} = a_n\]
holds for any integer $n\geq 1$, that is, $a=F_s(a)$ holds.
\end{proof}

We define the following maps.
\begin{defi}
For each $n\geq 1$, we define a map $d_n:\mathbb{N}\rightarrow\mathbb{N}$ as follows: The number $d_n(k)$ is the maximum number satisfying that $d_n(k)\mid k$ holds and $d_n(k)$ and $n$ are coprime, for any integer $k\geq 1$.
\end{defi}

\begin{lemm}\label{main4lemm5}
Suppose that $R$ is $\mathbb{Z}$-torsion free, and let $x$ be an element of $\Nr{R}$ satisfying that $\phi(x)$ has the $c$-cycle. If the set $d_c(\supp(x))$ is finite, then the set $d_c(\supp(x))$ has the unique element $1\in\mathbb{N}$.
\end{lemm}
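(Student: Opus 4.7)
The plan is to argue by contradiction: assume that $d_c(\supp(x))$ contains some $r > 1$, and exhibit infinitely many elements in this set. Pick $k \in \supp(x)$ realizing $d_c(k) = r$, and let $p$ be a prime divisor of $r$. Since $(r,c) = 1$ we automatically have $(p,c) = 1$, and since $r \mid k$ we have $p \mid k$.

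The key input is Lemma \ref{main4lemm4}: applied to $p$ (coprime to $c$) and to $\phi(x) \in \Gh{R}$ (which has the $c$-cycle by hypothesis), it furnishes an integer $s \geq 1$ with $p \mid s$ and $\phi(x) = F_s(\phi(x))$. Because $R$ is $\mathbb{Z}$-torsion free, the map $\phi$ is injective (Proposition \ref{prophi}) and it intertwines $F_s$ (Proposition \ref{phiVF}), so $x = F_s(x)$. Iterating via Proposition \ref{proVF}(2), I would deduce $x = F_{s^n}(x)$ for every integer $n \geq 1$.

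Now expand the defining formula: $x_k = F_{s^n}(x)_k = \sum_{[j,s^n]=ks^n}\frac{j}{k}\,x_j$. Since $x_k \neq 0$, the sum has at least one nonvanishing term, giving some $j_n \in \supp(x)$ with $[j_n,s^n] = k s^n$. Reading this identity at the prime $p$ gives $\max(v_p(j_n), v_p(s^n)) = v_p(k) + v_p(s^n)$. Because $v_p(k) \geq 1$, the maximum on the left must be $v_p(j_n)$, forcing $v_p(j_n) = v_p(k) + v_p(s^n) \geq 1 + n$. Hence $p^{n+1}$ divides $j_n$, and since $(p^{n+1},c) = 1$ it divides $d_c(j_n)$ as well, so $d_c(j_n) \geq p^{n+1}$ lies in $d_c(\supp(x))$ for every $n$, contradicting the finiteness hypothesis.

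The clever step --- and the one I expect to be the main obstacle --- is the use of Lemma \ref{main4lemm4} to convert the $c$-cycle hypothesis on $\phi(x)$ into a Frobenius fixed-point relation $x = F_s(x)$ with $p \mid s$; this is what lets a single problematic element $k \in \supp(x)$ with $p \mid k$ produce an infinite chain of elements $j_n \in \supp(x)$ whose $p$-adic valuations grow without bound. Once the fixed-point relation and its iterates are in hand, the prime-by-prime analysis of $[j_n, s^n] = k s^n$ is a routine valuation computation, and the rest is formal manipulation with the results of Section~2.
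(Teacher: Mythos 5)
Your proof is correct, but it takes a genuinely different route from the paper's. The paper gives a direct argument: it forms $l = \mathrm{lcm}(d_c(\supp(x)))$ (here is where the finiteness hypothesis enters), applies Lemma~\ref{main4lemm4} once to get a single $s$ with $l\mid s$ and $x = F_s(x)$, and then for each $n\in\supp(x)$ finds $j\in\supp(x)$ with $j/(j,s)=n$; since $d_c(j)\mid l\mid s$ and $d_c(j)\mid j$, one has $d_c(j)\mid(j,s)$, which kills every coprime-to-$c$ prime in $n=j/(j,s)$, giving $d_c(n)=1$ directly. You instead argue by contradiction, picking a single prime $p\mid d_c(k)$ for some bad $k$, applying Lemma~\ref{main4lemm4} to get $x=F_s(x)$ with $p\mid s$, and then \emph{iterating} via $F_r\circ F_s=F_{rs}$ to produce a sequence $j_n\in\supp(x)$ with $v_p(j_n)\to\infty$, so $d_c(j_n)\to\infty$ and the finiteness hypothesis is violated at the end rather than invoked at the start. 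Both proofs hinge on the same conversion of the $c$-cycle hypothesis into a Frobenius fixed-point relation, but the paper exploits the finiteness up front to choose $s$ cleverly (avoiding any iteration), while your argument uses a more local choice of $s$ and recovers the conclusion by letting the exponent blow up; your route is slightly longer but arguably more transparent about \emph{why} finiteness forces the conclusion. The valuation computation $\max(v_p(j_n), v_p(s^n)) = v_p(k)+v_p(s^n)$ and the deduction $v_p(j_n)\geq n+1$ are correct.
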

\begin{proof}
Let $n$ be an element of $\supp(x)$. We show $d_c(n)=1$. By the assumption, we can put the least common multiple of all elements of $d_c(\supp(x))$, written by $l$. Two integers $l$ and $c$ are coprime. Thus, there exists an integer $s$ such that $l\mid s$ and $\phi(x)=F_s(\phi(x))$ holds by Lemma \ref{main4lemm4}. The map $\phi$ preserves the Frobenius operation $F_s$, thus we have $\phi(x)=F_s(\phi(x))=\phi(F_s(x))$. Since $R$ is $\mathbb{Z}$-torsion free, we have $x=F_s(x)$. Thus,
\begin{eqnarray}
 x_n=\sum_{[j,s]=ns}\dfrac{j}{n}x_j
\end{eqnarray}
holds where $x=\vect{x}$. 

By $x_n\neq 0$, there exists an integer $j\in\supp(x)$ such that $[j,s]=ns$ holds. In particular, the integer $j$ satisfies $j/(j,s)=n$. By the definition of the map $d_c$, we have $d_c(j)\mid j$. On the other hand we have $d_c(j)\mid s$ by $j\in\supp(x)$. Thus we have $d_c(j)\mid (j,s)$. Hence $d_c(n)=d_c(j/(j,s))=1$ holds.
\end{proof}

\begin{proof}[Proof of Theorem \ref{main4theo3}]
We assume that $x$ belongs to the image of $T_c$. Then, the element $x$ has the finite support by Lemma \ref{supplemm}. Conversely, we assume that $x$ has the finite support, and we show $x_n=0$ for any integer $n\geq 1$ with $n\nmid c$ where $x=\vect{x}$. 

We denote the prime decomposition of $c$ and $n$ by $c=p_1^{e_1}\cdots p_r^{e_r}$ and $n=p_1^{f_1}\cdots p_r^{f_r}\ (e_i,f_i\geq 0, i=1,\ldots,r$). Let $i=1,\ldots,r$ be an integer, and put $c_i=c/p_i^{e_i}$. Since the set $\supp(x)$ is finite by Lemma \ref{main4lemm3}, the set $d_{c_i}(\supp(F_{p_i^{e_i}}(x)))$ is also finite. By Lemma \ref{main4lemm2}, the element $\phi(F_{p_i^{e_i}}(x))$ has the $c_i$-cycle. Using Lemma \ref{main4lemm5}, we have 
\begin{eqnarray}\label{main4theo31}
d_{c_i}(\supp(F_{p_i^{e_i}}(x)))=\{1\}.
\end{eqnarray}
By $n\nmid c$, there exists an integer $k=1,\ldots,r$ such that $p_k\mid n/p_k^{e_k}$ holds.
Let $y=\vect{y}=F_{p_k^{e_k}}(x)$. Then, we have $p_k\mid d_{c_k}(n/p_k^{e_k})$ since $p_k\nmid c_k$ holds. That is, one has $d_{c_k}(n/p_k^{e_k}) \neq 1$. Thus, we have $y_{n/p_k^{e_k}}=0$ by (\ref{main4theo31}). On the other hand, using the definition of Frobenius operations we have 

\[ y_{n/p_k^{e_k}}=\sum_{[j,p_k^{e_k}]=n}\dfrac{jp_k^{e_k}}{n}x_j= p_k^{e_k}x_n\]
because if the integer $j$ satisfying $[j,p_k^{e_k}]=n$, then $j=n$ holds. Thus we can obtain $x_n=0$ since $R$ is $\mathbb{Z}$-torsion-free.

Hence, the element $x$ belongs to the image of $T_c$ by Proposition \ref{ImageTr}.
\end{proof}

\subsection{Multiplication and Frobenius operation}\label{SSMultiFrobenius}
In \S \ref{SSMultiFrobenius}, we discuss product groups and Frobenius operations to calculate $\lambda_t(\chi)(g)$ for a virtual character $\chi$ and an element $g$ of a finite group. Moreover, we discuss the multiplication of two elements of the image of truncation operations with Frobenius operations. First, we state the following lemma.

\begin{lemm}\label{valu1}
Let $\chi$ be a character of a finite group $G$ and let $g$ be an element of $G$. Then, for any integer $k\geq 1$, the $k$-th element of $z(\Lambda(\Eva{G}{g})(\lambda_t(\chi)))$ is $\chi(g^k)$.
\end{lemm}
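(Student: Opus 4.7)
The plan is to reduce the computation to two already-established facts: the naturality of $z$ with respect to ring homomorphisms (Proposition \ref{triplecomm}(2)), and the explicit description of the Adams operations on $CF(G)$.

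First I would unwind the definitions. By the very definition of Adams operations applied to the $\lambda$-ring $CF(G)$, the power series identity
\[
\sum_{n=1}^{\infty}\psi^n(\chi)(-t)^n = -t\frac{d}{dt}\log \lambda_t(\chi)
\]
holds in $\Lambda(CF(G))$. Comparing with the definition of the map $z:\Lambda(CF(G))\to\Gh{CF(G)}$, this says exactly that the $k$-th component of $z(\lambda_t(\chi))$ is $\psi^k(\chi)$. Moreover, by the definition of the Adams operations on $CF(G)$ given in \S 2.3, one has $\psi^k(\chi)(g)=\chi(g^k)$ for every $g\in G$ and every integer $k\geq 1$.

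Next I would apply Proposition \ref{triplecomm}(2) to the ring homomorphism $\Eva{G}{g}:CF(G)\to\mathbb{C}$, which gives the commutation
\[
z\circ\Lambda(\Eva{G}{g}) = \Gh{\Eva{G}{g}}\circ z.
\]
Evaluating both sides at $\lambda_t(\chi)\in\Lambda(CF(G))$ yields
\[
z\bigl(\Lambda(\Eva{G}{g})(\lambda_t(\chi))\bigr) = \Gh{\Eva{G}{g}}\bigl(z(\lambda_t(\chi))\bigr) = \Gh{\Eva{G}{g}}\bigl(\psi^1(\chi),\psi^2(\chi),\psi^3(\chi),\ldots\bigr).
\]
By the definition of $\Gh{\Eva{G}{g}}$ (componentwise application of $\Eva{G}{g}$), the $k$-th entry of this vector is $\Eva{G}{g}(\psi^k(\chi))=\psi^k(\chi)(g)=\chi(g^k)$, which is what we wanted to prove.

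There is no real obstacle here: the content of the lemma is exactly the combination of the already-proved naturality square for $z$ and the defining formula for the Adams operations on $CF(G)$. The only thing to be careful about is keeping track of where each object lives ($\lambda_t(\chi)$ lives in $\Lambda(CF(G))$, while $\Lambda(\Eva{G}{g})(\lambda_t(\chi))$ lives in $\Lambda(\mathbb{C})$), so that the correct instance of $z$ is being applied in each step.
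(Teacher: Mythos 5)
Your proof is correct and follows essentially the same route as the paper: apply the naturality identity $z\circ\Lambda(\Eva{G}{g})=\Gh{\Eva{G}{g}}\circ z$ from Proposition \ref{triplecomm}(2), then use the definition of Adams operations on $CF(G)$ to identify the $k$-th component of $z(\lambda_t(\chi))$ with $\psi^k(\chi)$ and evaluate at $g$. The only difference is that you spell out the step identifying $z(\lambda_t(\chi))$ with $(\psi^1(\chi),\psi^2(\chi),\ldots)$, which the paper leaves implicit.
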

\begin{proof}
By Proposition \ref{triplecomm}, we have 
\[ z(\Lambda(\Eva{G}{g})(\lambda_t(\chi)))=\Gh{{\Eva{G}{g}}}\circ z\circ\lambda_t(\chi
).\]
Hence, the $k$-th element of $z(\Lambda(\Eva{G}{g})(\lambda_t(\chi)))$ is $\Eva{G}{g}(\psi^k(\chi))$, which coincides with $\chi(g^k)$ by the definition of Adams operations on $CF(G)$.
\end{proof}

We state the following theorem.
\begin{theo}\label{main3MMM}
Let $G_1$ and $G_2$ be finite groups, and let $\chi_1$ {\rm(}resp. $\chi_2{\rm)}$ be a virtual character of $G_1$ {\rm(}resp. $G_2${\rm)}. Then, the virtual character $\chi_1\chi_2$ of $G_1\times G_2$ defined by $\chi_1\chi_2(g_1,g_2):=\chi_1(g_1)\chi_2(g_2)$ satisfies
\begin{eqnarray}\label{main3MM}
\lambda_t(\chi_1\chi_2)(g_1g_2)=\lambda_t(\chi_1)(g_1)\cdot_{\Lambda}\lambda_t(\chi_2)(g_2).
\end{eqnarray}
\end{theo}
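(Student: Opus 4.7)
The strategy is to apply the ring homomorphism $z:\Lam{\mathbb{C}}\to\Gh{\mathbb{C}}$ defined in \S 2.2 to both sides of (\ref{main3MM}) and verify equality componentwise. Since $\mathbb{C}$ is $\mathbb{Z}$-torsion free, $z$ is injective on $\Lam{\mathbb{C}}$, so this reduction suffices.

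For the left-hand side, I apply Lemma \ref{valu1} to the finite group $G_1\times G_2$, the virtual character $\chi_1\chi_2$, and the element $(g_1,g_2)$. This gives that the $k$-th component of $z\bigl(\lambda_t(\chi_1\chi_2)((g_1,g_2))\bigr)$ equals $(\chi_1\chi_2)((g_1,g_2)^k)$. Using $(g_1,g_2)^k=(g_1^k,g_2^k)$ in the direct product and the defining formula for $\chi_1\chi_2$, this simplifies to $\chi_1(g_1^k)\chi_2(g_2^k)$. (Although Lemma \ref{valu1} is phrased for characters, its proof invokes only the Adams-operation structure of $CF(-)$ and the map $\Eva{G}{\cdot}$, so it carries over verbatim to virtual characters.)

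For the right-hand side, $z$ is a ring homomorphism from $(\Lam{\mathbb{C}},\cdot_\Lambda)$ to $(\Gh{\mathbb{C}},\text{componentwise})$. Hence the $k$-th component of $z\bigl(\lambda_t(\chi_1)(g_1)\cdot_\Lambda\lambda_t(\chi_2)(g_2)\bigr)$ equals the product of the $k$-th components of $z\bigl(\lambda_t(\chi_i)(g_i)\bigr)$ for $i=1,2$. By Lemma \ref{valu1} applied separately to $(G_i,\chi_i,g_i)$, these two components are $\chi_1(g_1^k)$ and $\chi_2(g_2^k)$, whose product coincides with the $k$-th component obtained for the left-hand side. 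Injectivity of $z$ then yields (\ref{main3MM}).

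I do not anticipate any genuine obstacle; the sole point requiring care is that one must use the $\lambda$-ring multiplication $\cdot_\Lambda$ on $\Lam{\mathbb{C}}$ (not the multiplication of $\mathbb{C}[[t]]$, which plays the role of addition here), since it is precisely $\cdot_\Lambda$ that $z$ transports to componentwise product on $\Gh{\mathbb{C}}$. An essentially equivalent route would be to verify (\ref{main3MM}) one Adams operation at a time using $\psi^k(\chi_1\chi_2)((g_1,g_2))=\chi_1(g_1^k)\chi_2(g_2^k)$ and the fact that $\psi^k$ is multiplicative on $CF(G_1\times G_2)$; this is the same computation read off the components of $z$.
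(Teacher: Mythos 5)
Your proof is correct and follows essentially the same route as the paper's: apply the injective ring homomorphism $z:\Lam{\mathbb{C}}\to\Gh{\mathbb{C}}$, compute the $k$-th component of each side as $\chi_1(g_1^k)\chi_2(g_2^k)$ via Lemma~\ref{valu1}, and conclude by injectivity of $z$ (using that $\mathbb{C}$ is $\mathbb{Z}$-torsion free). Your parenthetical remark that Lemma~\ref{valu1} extends from characters to virtual characters is a valid observation and a small improvement in rigor over the paper's terse use of that lemma.
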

\begin{proof}
We consider Adams operations. Let $g_1\in G_1$, $g_2\in G_2$, and 
\begin{eqnarray*}
a=\vect{a}&=&z(\Lambda(\Eva{G_1\times G_2}{(g_1,g_2)})(\lambda_t(\chi_1\chi_2))),\\ b=\vect{b}&=&z(\Lambda(\Eva{G_1}{g_1})(\chi_1)),\\
c=\vect{c}&=&z(\Lambda(\Eva{G_2}{g_2})(\chi_2)). 
\end{eqnarray*}
Then, for any integer $k\geq 1$, we have $a_k=\chi_1\chi_2((g_1,g_2)^k)=\chi_1(g_1^k)\chi_1(g_2^k)$, $b_k=\chi_1(g_1^k)$ and $c_k=\chi_2(g_2^k)$ by Lemma \ref{valu1}. Thus, we have $a_k=b_kc_k$, and hence, we have $a=bc$.
Since the complex field $\mathbb{C}$ is $\mathbb{Z}$-torsion free, the map $z:\Lambda(\mathbb{C})\rightarrow\Gh{\mathbb{C}}$ is an injective ring homomorphism. Hence, this theorem holds.
\end{proof}

Next, we prove the following theorem.
\begin{theo}\label{main3FF}
For any virtual character $\chi$ of a finite group $G$ and integer $k\geq 1$, we have $E_{Nr}(\lambda_t(\chi)(g^k))=F_k\circ E_{Nr}(\lambda_t(\chi)(g))$.
\end{theo}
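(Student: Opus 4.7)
The plan is to transport the claimed identity through the injective ring homomorphism $\phi:\Nr{\mathbb{C}}\to\Gh{\mathbb{C}}$. Since $\mathbb{C}$ is a $\mathbb{Q}$-algebra (hence $\mathbb{Z}$-torsion free), Proposition \ref{prophi} guarantees that $\phi$ is injective, so it will suffice to verify the equation after applying $\phi$ to both sides. Moreover, $\mathbb{C}$ carries a binomial ring structure (by Theorem \ref{LambdaGh}, as the unique $\lambda$-structure on the $\mathbb{Q}$-algebra $\mathbb{C}$ making every Adams operation the identity), so Theorem \ref{YauTheo} yields $\phi\circ E_{Nr}=z$, and by Proposition \ref{phiVF} the map $\phi$ commutes with every Frobenius operation $F_k$.

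Applying $\phi$ to each side of the desired identity therefore reduces the theorem to the equality
\[ z\bigl(\lambda_t(\chi)(g^k)\bigr)=F_k\bigl(z(\lambda_t(\chi)(g))\bigr) \]
in $\Gh{\mathbb{C}}$. This is a purely componentwise assertion. By Lemma \ref{valu1}, the $n$-th component of $z(\lambda_t(\chi)(g))$ is $\chi(g^n)$, so by the definition of $F_k$ on $\Gh{\mathbb{C}}$, the $n$-th component of $F_k(z(\lambda_t(\chi)(g)))$ is the $kn$-th component of $z(\lambda_t(\chi)(g))$, namely $\chi(g^{kn})$. On the other side, Lemma \ref{valu1} applied to $g^k$ says that the $n$-th component of $z(\lambda_t(\chi)(g^k))$ is $\chi((g^k)^n)=\chi(g^{kn})$. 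The components agree for every $n\geq 1$, and injectivity of $\phi$ transports the equality back to $\Nr{\mathbb{C}}$.

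The argument is essentially formal once the framework of \S 2 is in place: the main ingredients are $\phi\circ E_{Nr}=z$, $\phi\circ F_k=F_k\circ\phi$, and the explicit description of the components of $z\circ\lambda_t$ in Lemma \ref{valu1}. There is no substantive obstacle; the only point requiring care is to identify the ambient coefficient ring as $\mathbb{C}$ (so that $E_{Nr}$ is available via the binomial structure on $\mathbb{C}$, rather than on the non-binomial ring $CF(G)$) and to recognize that the defining formula $\psi^n(\chi)(g)=\chi(g^n)$ of Adams operations on $CF(G)$ is exactly what interchanges the effect of raising $g$ to the $k$-th power with the Frobenius $F_k$ on the necklace side.
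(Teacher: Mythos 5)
Your proof is correct and follows essentially the same route as the paper's: apply the injective map $\phi$ to both sides, use $\phi\circ E_{Nr}=z$ (Theorem \ref{YauTheo}) and $\phi\circ F_k=F_k\circ\phi$ (Proposition \ref{phiVF}), then check via Lemma \ref{valu1} that the $n$-th components on both sides equal $\chi(g^{nk})$. Your remark that the ambient coefficient ring is $\mathbb{C}$ (a binomial ring), not $CF(G)$, is a helpful clarification the paper leaves implicit, but the argument is otherwise identical.
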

\begin{proof}
For any integer $k\geq 1$, we have
\begin{eqnarray}\label{FM1}
\phi\circ E_{Nr}(\lambda_t(\chi)(g^k))&=&z(\Lambda(\Eva{G}{g^k})(\lambda_t(\chi)))
\end{eqnarray}
and
\begin{eqnarray}\label{FM2}
\phi\circ F_k\circ E_{Nr}(\lambda_t(\chi)(g))&=&F_k(z(\Lambda(\Eva{G}{g})(\lambda_t(\chi)))).
\end{eqnarray}
by Proposition \ref{phiVF}. Then $n$-th elements of both (\ref{FM1}) and (\ref{FM2}) coincide with $\chi (g^{nk})$ by Lemma \ref{valu1}. Hence, we have 
\[ \phi\circ E_{Nr}(\lambda_t(\chi)(g^k))=\phi\circ F_k\circ E_{Nr}(\lambda_t(\chi)(g)).\]

Since the map $\phi$ is injective, this proposition holds.
\end{proof}

Next, we discuss the problem to find another form of 
\begin{eqnarray}\label{Tab1}
T_{r}(x)\cdot_{Nr}T_{s}(y)
\end{eqnarray}
with Frobenius operation for any elements $x$ and $y$ of $\Nr{R}$. For example, we consider when $r=12$ and $s=18$. Put $x=\vect{x}, y=\vect{y}$ and $\alpha=\vect{\alpha}=x\cdot_{Nr} y$. We have
\begin{eqnarray*}
\alpha_{36}=\sum_{[i,j]=36}(i,j)x_iy_j=x_4y_9+3x_{12}y_9+2x_4y_{18}+6x_{12}y_{18}\\
\end{eqnarray*}
by the definition of multiplication of $\Nr{R}$. However, using Frobenius operation we have $\alpha_{36}=(x_4+3x_{12})(y_9+2y_{18})=u_{4}v_9$ where $u=\vect{u}=F_3(x)$ and $v=\vect{e}=F_2(y)$, which gives a factorization.
The calculation of (\ref{Tab1}) appears, for example, in the identities obtained from (\ref{main3MM}) using the map $E_{Nr}$ when virtual characters $\chi_1$ and $\chi_2$ are integer-valued characters. 
These results of this section will be used in the next section.

So, we calculate $T_{r}(x)\cdot_{Nr}T_{s}(y)$. However, we only need to calculate the $[r,s]$-th element since the following two lemmas hold.

\begin{lemm}\label{main5lemm1}
Let $x$ and $y$ be elements of $\Nr{R}$. For any integers $r,s\geq 1$, the element $T_r(x)\cdot_{Nr} T_s(y)$ belongs to the image of $T_{[r,s]}$.
\end{lemm}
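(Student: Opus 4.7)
The plan is to apply the characterization in Proposition \ref{ImageTr}(1): an element $z \in \Nr{R}$ lies in the image of $T_{[r,s]}$ if and only if $z_n = 0$ for every integer $n \geq 1$ with $n \nmid [r,s]$. So I only need to verify this vanishing condition for $z = T_r(x) \cdot_{Nr} T_s(y)$.

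First, I would unfold the definition of the multiplication on $\Nr{R}$ at the $n$-th slot to obtain
$$\bigl(T_r(x) \cdot_{Nr} T_s(y)\bigr)_n = \sum_{[i,j]=n}(i,j)\, T_r(x)_i\, T_s(y)_j.$$
A term on the right can be nonzero only when both $T_r(x)_i \neq 0$ and $T_s(y)_j \neq 0$; by the definitions of the truncated operations this forces $i \mid r$ and $j \mid s$. But then $n = [i,j]$ divides $[r,s]$, since $i \mid r$ and $j \mid s$ imply $[i,j] \mid [r,s]$. Consequently, whenever $n \nmid [r,s]$ every summand vanishes, and the $n$-th component is $0$, which is exactly what Proposition \ref{ImageTr}(1) requires.

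A more structural alternative, if one prefers to avoid reopening the multiplication formula, would be to note that $T_{[r,s]}$ is a ring homomorphism by Proposition \ref{main4lemm00} and that Proposition \ref{main4lemm01} gives $T_{[r,s]} \circ T_r = T_{(r,[r,s])} = T_r$ and likewise $T_{[r,s]} \circ T_s = T_s$. Therefore
$$T_{[r,s]}\bigl(T_r(x)\cdot_{Nr} T_s(y)\bigr) = T_{[r,s]}(T_r(x))\cdot_{Nr} T_{[r,s]}(T_s(y)) = T_r(x)\cdot_{Nr} T_s(y),$$
so $T_r(x)\cdot_{Nr} T_s(y)$ is a fixed point of $T_{[r,s]}$ and hence lies in its image.

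I do not anticipate a substantive obstacle: the only arithmetic input is the elementary divisibility fact $i\mid r,\ j\mid s \Rightarrow [i,j]\mid [r,s]$, and everything else is bookkeeping with the truncation, either at the level of components (first approach) or at the level of operation identities (second approach). The first route is a two-line direct calculation, the second showcases that $T_{[r,s]}$ acts as the identity on the image and illustrates how the homomorphism properties of $T$ interact cleanly with the composition law.
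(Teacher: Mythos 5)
Both routes you sketch are correct. Your ``structural alternative'' is word for word the paper's own proof: it applies Proposition \ref{main4lemm00} to pull $T_{[r,s]}$ inside the product and Proposition \ref{main4lemm01} to collapse $T_{[r,s]}\circ T_r$ to $T_r$ and $T_{[r,s]}\circ T_s$ to $T_s$, concluding that $T_r(x)\cdot_{Nr}T_s(y)$ is fixed by $T_{[r,s]}$. Your first, component-wise argument is a genuinely different and more elementary route: it avoids invoking the ring-homomorphism property of $T_{[r,s]}$ entirely and instead unwinds the necklace multiplication at slot $n$, using only the divisibility fact that $i\mid r$ and $j\mid s$ force $[i,j]\mid[r,s]$, then reads off membership in the image from Proposition \ref{ImageTr}(1). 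The paper's approach is cleaner once the operator identities are in hand and scales better to similar statements; your direct computation trades that machinery for a short self-contained check that makes the underlying arithmetic visible.
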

\begin{proof}
By Proposition \ref{main4lemm00}, we have
\begin{eqnarray*}
T_{[r,s]}(T_r(x)\cdot_{Nr} T_s(y))&=&T_{[r,s]}\circ T_r(x)\cdot_{Nr}T_{[r,s]}\circ T_s(y)\\
&=&T_r(x)\cdot_{Nr} T_s(y).
\end{eqnarray*}
Hence, the element $T_r(x)\cdot_{Nr} T_s(y)$ belongs to the image of $T_{[r,s]}$.
\end{proof}

\begin{lemm}\label{main5lemm2}
For any integers $r,s,u\geq 1$ with $u\mid[r,s]$, we have $[(r,u), (s,u)]=u$.
\end{lemm}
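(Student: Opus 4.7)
The plan is to prove the equality prime-by-prime via $p$-adic valuations, which converts the statement into an elementary inequality about $\max$ and $\min$ of triples of non-negative integers.

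First I would note the easy direction: since $(r,u) \mid u$ and $(s,u) \mid u$, their least common multiple $[(r,u),(s,u)]$ also divides $u$. So the real content is the reverse divisibility $u \mid [(r,u),(s,u)]$, and for this it suffices to show, for every prime $p$, that $v_p(u) \le v_p([(r,u),(s,u)])$, where $v_p$ denotes the $p$-adic valuation.

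Fix a prime $p$ and write $a = v_p(r)$, $b = v_p(s)$, $c = v_p(u)$. The hypothesis $u \mid [r,s]$ translates into $c \le \max(a,b)$, and the quantity to control is
\[ v_p\bigl([(r,u),(s,u)]\bigr) \;=\; \max\bigl(\min(a,c),\,\min(b,c)\bigr). \]
Without loss of generality $a \ge b$, so $\max(a,b) = a \ge c$, hence $\min(a,c) = c$. Therefore the right-hand side is at least $c = v_p(u)$. Running this over all primes $p$ gives $u \mid [(r,u),(s,u)]$, and combined with the easy direction this yields the claimed equality.

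I do not expect any real obstacle here; the whole argument is a short valuation computation, and the only place where the hypothesis $u \mid [r,s]$ is used is in the step $c \le \max(a,b)$, which is precisely what prevents both $\min(a,c)$ and $\min(b,c)$ from being strictly smaller than $c$.
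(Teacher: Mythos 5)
Your proof is correct and takes essentially the same route as the paper: both argue prime by prime, writing the exponent of $[(r,u),(s,u)]$ at each prime as $\max(\min(v_p(r),v_p(u)),\,\min(v_p(s),v_p(u)))$ and using $u \mid [r,s]$ to compare it with $v_p(u)$. You streamline the case analysis slightly by first observing the easy divisibility $[(r,u),(s,u)] \mid u$ and then reducing to a single WLOG inequality, while the paper verifies equality of exponents directly by cases, but the underlying computation is the same.
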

\begin{proof}
We denote the prime decomposition of $r,s,u$ and $[(r,u),(s,u)]$ by $r=p_1^{r_1}\cdots p_q^{r_q}, s=p_1^{s_1}\cdots p_q^{s_q}, u=p_1^{u_1}\cdots p_q^{u_q}$ and $[(r,u),(s,u)]=p_1^{e_1}\cdots p_q^{e_q}$. Then, we have $u_i\leq$ max$(r_i, s_i)$ since $u\mid [r,s]$ holds.

By the definition of $e_i$, we have
\[ e_i=\mbox{max}(\mbox{min}(r_i,u_i), \mbox{min}(s_i,u_i)). \]
If $r_i\leq u_i$ and $s_i>u_i$, then one has $e_i=\mbox{max}(r_i,u_i)=u_i$. If $r_i>u_i$ and $s_i\leq u_i$. Thus one has $e_i=\mbox{max}(u_i,s_i)=u_i$. If $r_i>u_i$ and $ s_i>u_i$, then we have $e_i=u_i$. 

In any case, we have $e_i=u_i$ for any $i=1,\ldots,q$, that is, $[(r,u), (s,u)]=u$ holds.
\end{proof}
By Lemma \ref{main5lemm1} and Lemma \ref{main5lemm2}, the $n$-th element of $T_{r}(x)\cdot_{Nr} T_{s}(y)$ is equal to the $r$-th element of $T_{(a,r)}(x)\cdot_{Nr} T_{(b,r)}(y)$ for any divisor $n$ of $[r,s]$. Hence, we calculate the $[r,s]$-element of $T_{r}(x)\cdot_{Nr}T_{s}(y)$.

In the remainder of this section, we denote the $n$-th element of $x\in\Nr{R}$ by $x_n$ and the $n$-th element for $a\in\Gh{R}$ by $a_n$ for simplicity.
\begin{theo}\label{PMMaintheorem1}
Let $r,s \geq 1$ be integers, and we denote the prime decomposition of $r$ and $s$ by $r=p_1^{r_1}\cdots p_q^{r_q}$ and $s=p_1^{s_1}\cdots p_q^{s_q}$. We define integers $a_1,a_2,a_3,b_1,b_2,b_3\geq 1$ by
\begin{eqnarray*}
&{}&a_1=\prod_{i, r_i>s_i}p_i^{r_i},\quad a_2=\prod_{i, r_i=s_i}p_i^{r_i},\quad a_3=\prod_{i, r_i<s_i}p_i^{r_i},\\
&{}&b_1=\prod_{i, r_i>s_i}p_i^{s_i},\quad b_2=\prod_{i, r_i=s_i}p_i^{s_i},\quad b_3=\prod_{i, r_i<s_i}p_i^{s_i}.
\end{eqnarray*}
For integers $a_1,a_2,a_3,b_1,b_2$ and $b_3$, the followings hold,
\begin{eqnarray*}
&{}&r=a_1a_2a_3,\quad s=b_1b_2b_3,\\
&{}&(a_1,a_2)=(a_2,a_3)=(a_3,a_1)=1,\\
&{}&(b_1,b_2)=(b_2,b_3)=(b_3,b_1)=1,\\
&{}&b_1\mid a_1,\quad \Dpr(a_1)=\Dpr(a_1/b_1),\\ 
&{}&a_2=b_2,\\
&{}&a_3\mid b_3,\quad \Dpr(b_3)=\Dpr(b_3/a_3)
\end{eqnarray*}
where $\Dpr(n)$ is the set of all prime divisors of $n$ for $n\geq 1$. 

Suppose that $R$ is a $\mathbb{Q}$-algebra. Let $x$ and $y$ be elements of $\Nr{R}$. 
Then, we have
\[ (T_r(x)\cdot_{Nr} T_s(y))_{[r,s]}=\dfrac{1}{a_2}\sum_{d_2\mid a_2}\mu\Big(\dfrac{a_2}{d_2}\Big)(F_{a_1d_2}(x))_{a_3} (F_{d_2b_3}(y))_{b_1}.\]
In particular, we have
\[ (T_r(x)\cdot_{Nr} T_s(y))_{[r,s]}=(F_{a_1}(x))_{a_3} (F_{b_3}(y))_{b_1} \]
when $a_2=b_2=1$.
\end{theo}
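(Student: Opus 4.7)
The plan is to reduce the claim to an elementary but careful arithmetic computation driven by the prime decompositions. First I would unfold the LHS using the definitions of $T_r$, $T_s$, and the multiplication on $\Nr{R}$, giving $(T_r(x)\cdot_{Nr} T_s(y))_{[r,s]} = \sum_{i\mid r,\, j\mid s,\, [i,j]=[r,s]} (i,j)\, x_i\, y_j$. Noting that $[r,s] = a_1 a_2 b_3$, I would then analyze the condition $[i,j]=[r,s]$ prime by prime. For primes of $a_1$, the inequality $v_p(j)\leq s_p < r_p$ forces $v_p(i)=r_p$, giving $a_1\mid i$ exactly, and symmetrically $b_3\mid j$ exactly. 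Writing $i=a_1 i_2 i_3$ and $j=j_1 j_2 b_3$ with $i_2,j_2\mid a_2$, $i_3\mid a_3$, $j_1\mid b_1$, the remaining constraint reduces to $[i_2,j_2]=a_2$ on the middle part, while the gcd factors cleanly as $(i,j)=j_1(i_2,j_2)i_3$.

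Next I would handle the $a_2$-part by combining two standard moves. First, the identity $(i_2,j_2)\cdot [i_2,j_2]=i_2 j_2$ rewrites $(i_2,j_2)$ as $i_2 j_2/a_2$ under the constraint $[i_2,j_2]=a_2$, producing the $1/a_2$ prefactor. Second, I apply \Mobius\ inversion to replace the exact LCM constraint by inclusion constraints: setting $h(d)=\sum_{i_2,j_2\mid d}i_2 j_2\,f(i_2,j_2)$ and $g(d)=\sum_{[i_2,j_2]=d}i_2 j_2\,f(i_2,j_2)$, one has $g(a_2)=\sum_{d_2\mid a_2}\mu(a_2/d_2)\,h(d_2)$. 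Absorbing the dependence on the remaining indices into $f$, this converts the LHS into $\tfrac{1}{a_2}\sum_{d_2\mid a_2}\mu(a_2/d_2)\sum_{i_2,j_2\mid d_2,\,i_3\mid a_3,\,j_1\mid b_1} i_2 j_2 j_1 i_3\, x_{a_1 i_2 i_3}\, y_{j_1 j_2 b_3}$.

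Once the inner sum is unconstrained, it factors as a product of an $x$-sum in $(i_2,i_3)$ and a $y$-sum in $(j_1,j_2)$. To identify each factor with the corresponding Frobenius evaluation, I would unfold the defining formula $(F_k(z))_n=\sum_{[J,k]=nk}(J/n) z_J$ and repeat the same prime-by-prime analysis: the LCM condition carves out exactly the ranges $J_2\mid d_2$ on the $x$-side and $K_2\mid d_2$ on the $y$-side, together with the $a_3$-type and $b_1$-type variables, and the ratio $J/n$ supplies the coefficients $i_2 i_3$ and $j_1 j_2$ that appeared above. Assembling these reproduces the Frobenius product on the RHS; in the special case $a_2=b_2=1$ the \Mobius\ sum collapses to a single term, recovering the bare factorization illustrated in the example preceding the theorem.

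The main obstacle is the prime-by-prime bookkeeping at two points: first, confirming the shape of admissible $(i,j)$ and the factorization $(i,j)=j_1(i_2,j_2)i_3$; and second, on the Frobenius side, checking that the LCM condition defining the Frobenius operation carves out exactly the intended range of summation with the intended weights. The \Mobius\ inversion and the gcd-LCM identity are routine once the middle constraint has been isolated; the delicate part is carefully tracking the three types of primes (those dividing $a_1$, $a_2=b_2$, and $a_3$/$b_3$) so that the subscripts of $x$ and $y$ in the reduced LHS sum line up precisely with those produced by the Frobenius ranges on the RHS.
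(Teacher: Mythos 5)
Your route is genuinely different from the paper's. The paper transports the whole computation to the ghost ring $\Gh{R}$ via the isomorphism $\phi$ (available since $R$ is a $\mathbb{Q}$-algebra), where multiplication is componentwise; applies the Möbius formula for $\phi^{-1}$ at the index $[r,s]=a_1a_2b_3$; splits the sum over $d\mid a_1a_2b_3$ into a triple sum over $d_1\mid a_1$, $d_2\mid a_2$, $d_3\mid b_3$ (using implicitly that the conditions $\Dpr(a_1)=\Dpr(a_1/b_1)$ and $\Dpr(b_3)=\Dpr(b_3/a_3)$ force $b_1\mid d_1$ and $a_3\mid d_3$ whenever the Möbius factors are nonzero, so that the truncations collapse); and identifies the inner Möbius sums as Frobenius components via $\phi\circ F_r=F_r\circ\phi$. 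You instead stay on $\Nr{R}$, parametrize the admissible $(i,j)$ by a prime-by-prime analysis, rewrite $(i_2,j_2)$ by the gcd--lcm identity, decouple the LCM constraint by Möbius inversion in $a_2$, and match the resulting factors against the defining formula for $F_k$. Both are sound; yours is the more elementary and self-contained of the two.

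The gap is in the final matching, which you assert without carrying out, and if you do carry it out it does not land on the RHS as printed. Unfolding $(F_k(x))_n=\sum_{[J,k]=nk}(J/n)x_J$ against your $x$-factor $\sum_{i_2\mid d_2,\,i_3\mid a_3}i_2i_3\,x_{a_1i_2i_3}$: the weight $i_2i_3=(a_1i_2i_3)/a_1$ forces $n=a_1$, and the required range of $J=a_1i_2i_3$ forces $v_p(k)=0$ at primes of $a_1$, $v_p(k)=v_p(d_2)$ at primes of $a_2$, and $v_p(k)=v_p(a_3)$ at primes of $a_3$, so $k=a_3d_2$. Hence this factor is $(F_{a_3d_2}(x))_{a_1}$, not $(F_{a_1d_2}(x))_{a_3}$, and symmetrically the $y$-factor is $(F_{b_1d_2}(y))_{b_3}$, not $(F_{d_2b_3}(y))_{b_1}$. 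Your method therefore yields
\[ (T_r(x)\cdot_{Nr}T_s(y))_{[r,s]}=\frac{1}{a_2}\sum_{d_2\mid a_2}\mu\Big(\frac{a_2}{d_2}\Big)(F_{a_3d_2}(x))_{a_1}(F_{b_1d_2}(y))_{b_3}, \]
which disagrees with the theorem as printed. Your version is the correct one: in the $r=12$, $s=18$ example that precedes the theorem, $a_1=4$, $a_3=3$, $b_1=2$, $b_3=9$, and $\alpha_{36}=(x_4+3x_{12})(y_9+2y_{18})=(F_3(x))_4(F_2(y))_9=(F_{a_3}(x))_{a_1}(F_{b_1}(y))_{b_3}$, whereas the theorem's formula would give $(F_4(x))_3(F_9(y))_2=(x_3+2x_6+4x_{12})(y_2+3y_6+9y_{18})$, a different quantity. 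So the sentence ``Assembling these reproduces the Frobenius product on the RHS'' cannot stand as written; the unverified identification is exactly the step that would expose the misprint in the theorem statement, and you should complete it and record the corrected formula explicitly.
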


\begin{proof}
We use the fact that the map $\phi:\Nr{R}\rightarrow \Gh{R}$ is bijective and preserves truncated operations $T_r$ and Frobenius operations $F_r$. We modify the left side. Then, we have 
\begin{eqnarray*}
&&(T_{a_1a_2a_3}(x)\cdot_{Nr}T_{b_1b_2b_3}(y))_{[a_1a_2a_3,b_1b_2b_3]}\\
&=&(\phi^{-1}(T_{a_1a_2a_3}\circ\phi(x)) (T_{b_1b_2b_3}\circ\phi(y)))_{a_1a_2b_3}\\ 
&=&\dfrac{1}{a_1a_2b_3}\sum_{d\mid a_1a_2b_3}\mu\Big(\dfrac{a_1a_2b_3}{d}\Big)(T_{a_1a_2a_3}\circ\phi(x))_{d} (T_{b_1b_2b_3}\circ\phi(y))_{d}\\
&=&\dfrac{1}{a_1a_2b_3}\sum_{d_1\mid a_1}\sum_{d_2\mid a_2}\sum_{d_3\mid b_3}\mu\Big(\dfrac{a_1}{d_1}\Big)\mu\Big(\dfrac{a_2}{d_2}\Big)\mu\Big(\dfrac{b_3}{d_3}\Big) (\phi(x))_{d_1d_2a_3} (\phi(y))_{b_1d_2d_3}\\
&=&\dfrac{1}{a_2}\sum_{d_2\mid a_2}\mu\Big(\dfrac{a_2}{d_2}\Big)\Big\{\dfrac{1}{a_1}\sum_{d_1\mid a_1}\mu\Big(\dfrac{a_1}{d_1}\Big)(F_{d_2a_3}\circ\phi(x))_{d_1}\Big\}\\
&& \Big\{\dfrac{1}{b_3}\sum_{d_3\mid b_3}\mu\Big(\dfrac{b_3}{d_3}\Big)(F_{b_1d_2}\circ\phi(x))_{d_3}\Big\}\\
&=&\dfrac{1}{a_2}\sum_{d_2\mid a_2}\mu\Big(\dfrac{a_2}{d_2}\Big)\Big\{\dfrac{1}{a_1}\sum_{d_1\mid a_1}\mu\Big(\dfrac{a_1}{d_1}\Big)(\phi\circ F_{b_1d_2}(x))_{d_1}\Big\}\\
&& \Big\{\dfrac{1}{b_3}\sum_{d_3\mid b_3}\mu\Big(\dfrac{b_3}{d_3}\Big)(\phi\circ F_{d_2a_3}(x))_{d_3}\Big\}\\
&=&\dfrac{1}{a_2}\sum_{d_2 \mid a_2}\mu\Big(\dfrac{a_2}{d_2}\Big)(F_{b_1d_2}(x))_{a_1}(F_{d_2a_3}(y))_{b_3}
\end{eqnarray*}
where the second equality follows from Propositions \ref{prophi} and \ref{lemm13}, the third and seventh equality follow from the definition of the map $\phi$ and the \Mobius\ inversion formula, the fourth and fifth equality follows from the definition of truncated operations and Frobenius operations, respectively, and the sixth equality follows from Proposition \ref{phiVF}.
\end{proof}

\section{A representation of a symmetric group with a multiplicative anti-symmetric matrix}

In this section, we calculate $E_{Nr}(\lambda_t(\chi))$ where a character $\chi$ is defined by the following as an example. We fix an integer $k$, and let $\masm=(q_{i,j})_{i,j}$ be a square matrix whose size is $k$, and $\masm$ satisfies $q_{i,j}q_{j,i}=1$ for any integers $i,j=1,\ldots,k$. This matrix $\masm$ was introduced in \cite[Appendix I.10]{BG}, and called a multiplicative anti-symmetric matrix.

In \S \ref{QQopeW}, we will define maps with divided Verchiebung operation and Frobenius operations, written by $W_r$, and state properties. In \S\ref{QQDef}, we define a representation of a symmetric group $S_n$ associated with $\masm$, and will be written by $\rho_{\masm,n}$. We will calculate the character $\chi_{\masm,n}$ of $\rho_{\masm,n}$ in \S\ref{QQmatrix} and $E_{Nr}(\lambda_t(\chi_{\masm,n}))((1\ 2\cdots\ n))$ in \S\ref{QQnecklace}. and we will discuss the calculation method of $E_{Nr}(\lambda_t(\chi_{\masm,n}))(\sigma)$ for all $\sigma\in S_n$ with Theorem \ref{PMMaintheorem1}.

\subsection{The map $W_r$}\label{QQopeW}
In \S \ref{QQopeW}, we suppose that $R$ is a $\mathbb{Q}$-algebra. First, we introduce new operations on $\Nr{R}$ and $\Gh{R}$ for \S\ref{QQnecklace}. 

\begin{defi}
For any integer $r\geq 1$, we define two maps
\begin{eqnarray*}
W_r&:&\Nr{R}\rightarrow \Nr{R},\\
W_r&:&\Gh{R}\rightarrow \Gh{R}
\end{eqnarray*}
by $W_r:=V'_r\circ F_r$.
\end{defi}
By this definition, the following holds.
\begin{prop}\label{phiW}
The map $\phi$ preserves the map $W_r$ for any integer $r\geq 1$.
\end{prop}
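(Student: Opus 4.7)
The plan is to reduce the claim to the two preservation results already proved for the building blocks of $W_r$. By definition $W_r = V'_r \circ F_r$, and Proposition \ref{phiVF} gives $\phi \circ F_r = F_r \circ \phi$, while Proposition \ref{prodV1} gives $\phi \circ V'_r = V'_r \circ \phi$. So the natural route is a one-line composition argument.

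Concretely, I would compute
\[
\phi \circ W_r = \phi \circ V'_r \circ F_r = V'_r \circ \phi \circ F_r = V'_r \circ F_r \circ \phi = W_r \circ \phi,
\]
where the middle two equalities use Propositions \ref{prodV1} and \ref{phiVF} respectively, and the outer equalities are just the definition of $W_r$ on the source and target side. The only mild point to note is that this calculation makes sense on both $\Nr{R}$ and $\Gh{R}$ since the propositions cited apply to each, and the hypothesis that $R$ is a $\mathbb{Q}$-algebra (needed for $V'_r = \tfrac{1}{r}V_r$ to be defined) has been standing since the beginning of \S\ref{QQopeW}.

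There is essentially no obstacle here: the proposition is a formal consequence of the commutation of $\phi$ with each of the two factors in the definition of $W_r$. The only thing worth being careful about is not to conflate the $W_r$ on $\Nr{R}$ with the $W_r$ on $\Gh{R}$ in the middle of the display; writing the composition in terms of $V'_r$ and $F_r$ (which exist on both sides) avoids this entirely.
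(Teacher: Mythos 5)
Your proof is correct and takes essentially the same one-line composition route as the paper, decomposing $W_r = V'_r \circ F_r$ and moving $\phi$ through each factor in turn. The only cosmetic difference is that you cite Proposition \ref{prodV1} explicitly for the $V'_r$ step, while the paper folds it into the $\tfrac{1}{r}V_r$ form and cites only Proposition \ref{phiVF}; the substance is identical.
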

\begin{proof}
By Proposition \ref{phiVF}, we have
\[ \phi\circ W_r=\phi\circ \dfrac{1}{r}V'_r\circ F_r=\dfrac{1}{r}V'_r\circ \phi\circ F_r=\dfrac{1}{r}V'_r\circ F_r\circ\phi=W_r\circ\phi.\]
Thus, the map $\phi$ preserves $W_r$.
\end{proof}

By Propositions \ref{proVF} and \ref{proTV2}, we have the following proposition.
\begin{prop}\label{proW}
The followings hold for any integers $r,s \geq 1$.
\begin{itemize}
\item [{\rm (1)}]$F_r \circ W_s=W_{s/(r,s)}\circ F_r$,
\item [{\rm (2)}]$T_r\circ W_s=0\ {\rm if}\ s\nmid r$.
\end{itemize}
\end{prop}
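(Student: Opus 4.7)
The plan is to reduce both statements to the identities for $F_r$, $V'_s$, and $T_r$ that were already established, using only the definition $W_s=V'_s\circ F_s$. For part (1), I would set $d=(r,s)$, $r'=r/d$, $s'=s/d$, so that $(r',s')=1$, and compute
\[ F_r\circ W_s=F_r\circ V'_s\circ F_s. \]
Applying identity (4) of the final proposition of \S\ref{PMPrelim} (the $V'$-analog of Proposition \ref{proVF}(5)) gives $F_r\circ V'_s=F_{r'}\circ V'_{s'}$, and then identity (3) of that same proposition, using $(r',s')=1$, lets me swap to $V'_{s'}\circ F_{r'}$. Composing with $F_s$ on the right and applying $F_{r'}\circ F_s=F_{r's}$ from Proposition \ref{proVF}(2) yields
\[ F_r\circ W_s=V'_{s'}\circ F_{r's}. \]
On the other side, $W_{s/(r,s)}\circ F_r=V'_{s'}\circ F_{s'}\circ F_r=V'_{s'}\circ F_{s'r}$. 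Since $r's=rs/d=s'r$, the two expressions coincide, establishing (1).

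For part (2), the argument is shorter: assuming $s\nmid r$, write
\[ T_r\circ W_s=T_r\circ V'_s\circ F_s, \]
and invoke Corollary \ref{proTV2}, which gives $T_r\circ V'_s=0$ exactly in the case $s\nmid r$. Composing the zero map with $F_s$ on the right yields the zero map, so $T_r\circ W_s=0$.

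I do not expect any real obstacle here, since every required building block is available earlier in the paper. The only point that needs care is keeping track of the indices $r/(r,s)$ and $s/(r,s)$ in part (1); the identity $r's=s'r=rs/(r,s)$ is what makes the two sides line up, and this is where a sign-error or off-by-one would most likely creep in. Once the bookkeeping is straight, the proof is purely formal.
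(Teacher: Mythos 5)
Your proof is correct and follows essentially the same route as the paper: both reduce $F_r\circ W_s$ to $F_r\circ V'_s\circ F_s$, apply the $F$-$V'$ commutation identities from the proposition at the end of \S 2.1, and then merge the two $F$'s via $F_a\circ F_b=F_{ab}$; for part (2) both invoke Corollary \ref{proTV2}. Your version simply makes explicit the split into the coprime-commutation step and the reduction $F_r\circ V'_s=F_{r/(r,s)}\circ V'_{s/(r,s)}$, which the paper compresses into a single displayed chain.
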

\begin{proof}
(1) By Proposition \ref{proVF}, we have $F_r\circ W_s=F_r\circ V'_s\circ F_s=V'_{s/(r,s)}\circ F_{r/(r,s)}\circ F_{s}=W_{s/(r,s)}\circ F_r.$

(2) By Proposition \ref{proTV2}, we have $T_r\circ W_s=T_r\circ V'_s\circ F_s=0$.
\end{proof}

We will use the following lemma in \S \ref{QQnecklace}.
\begin{lemm}\label{lemmW}
For any element $a=\vect{a}\in\Gh{R}$ and integer $r\geq 1$, we have
\[ b_n=\begin{cases} a_n & \mbox{if}\ r\mid n, \\ 0 & \mbox{if}\ r\nmid n, \end{cases}\]
where $\vect{b}=W_r(a)$.
\end{lemm}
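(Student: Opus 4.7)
The plan is to verify the statement by directly unwinding the definition $W_r := V'_r \circ F_r$ in the case of $\Gh{R}$, where both $F_r$ and $V_r$ act by explicit component formulas given in Section 2.1.

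First I would compute $c = F_r(a)$: by the definition of the Frobenius operation on $\Gh{R}$, we have $c_n = a_{nr}$ for every integer $n \geq 1$. Then I would apply $V_r$ to $c$: by the definition of the Verschiebung on $\Gh{R}$, the $n$-th component of $V_r(c)$ equals $r c_{n/r} = r a_n$ if $r \mid n$, and equals $0$ otherwise. Dividing by $r$ yields $V'_r(c)$, which by definition is $W_r(a)$, and gives exactly
\[
b_n = \begin{cases} a_n & \text{if } r \mid n, \\ 0 & \text{if } r \nmid n. \end{cases}
\]

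There is no real obstacle here; the content of the lemma is just a bookkeeping consequence of $F_r \circ V_r = r \cdot \identitymap{R}$ (Proposition \ref{proVF}(3)) combined with the factor $1/r$ absorbed into $V'_r$, applied in the opposite order to the ``projection'' $F_r$ which discards indices not divisible by $r$. The only care needed is to perform the composition in the correct order, since $V'_r$ and $F_r$ do not commute in general; but since we work on $\Gh{R}$ and use the explicit formulas from \S\ref{PMPrelim}, the verification is a one-line unwinding on each component.
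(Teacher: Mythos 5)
Your proof is correct and follows essentially the same route as the paper: set $c=F_r(a)$ with $c_n=a_{nr}$, then apply $V'_r$ (equivalently $V_r$ followed by division by $r$) and read off the components, giving $b_n=a_n$ when $r\mid n$ and $b_n=0$ otherwise. The paper's version is slightly terser (it absorbs the $1/r$ from the start by writing $b_n=c_{n/r}$ directly), but the content and ordering of the steps are identical.
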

\begin{proof}
Put $c=\vect{c}=F_r(a)$. If an integer $n\geq 1$ satisfies $r\mid n$ then $b_n=c_{n/r}=a_n$ holds. If $r\nmid n$, then $b_n=0$ holds since $b=V_r'(c)$ holds.
\end{proof}

\subsection{Definition}\label{QQDef}
In \S \ref{QQDef}, we define a representation $\rho_{\masm,n}$. First, we consider a set of generators of $S_n$.
\begin{lemm}[\rm{\cite[p.88]{Sag}}]\label{generatorS}
The followings hold for a symmetric group $S_n$.
\begin{itemize}
\item [{\rm (1)}] The group $S_n$ is generated by $n-1$ elements $\sigma_1,\ldots,\sigma_{n-1}$ where $\sigma_i=(i\ i+1)$ for any integer $i=1,\ldots,n-1$.
\item [{\rm (2)}] All $\sigma_i$'s satisfy the following relations.
\begin{eqnarray*}
\sigma_i^2=1, && i=1,\ldots,n-1, \\
(\sigma_i \sigma_{i+1})^3=1, && i=1,\ldots,n-2, \\
(\sigma_i\sigma_j)^2=1, && |i-j|\geq 2.
\end{eqnarray*}
\item [{\rm (3)}] Suppose that $n\geq 2$. The group $S_n$ is isomorphic to a group $G_n$ which is generated by $n-1$ elements $\tau_1,\ldots,\tau_{n-1}$ satisfying $\tau_i^2=1$ for any integer $i=1,\ldots,n-1$, $(\tau_i\tau_{i+1})^3=1$ for any integer $i=1,\ldots,n-2$ and $(\tau_i\tau_j)^2=1$ for any integer $i,j=1,\ldots,n-1$ with $|i-j|\geq 2$.
\end{itemize}
\end{lemm}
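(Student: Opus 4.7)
The plan is to treat the three parts separately, with part (3) carrying the real content.

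For (1), I would first recall that every permutation in $S_n$ factors into disjoint cycles and that every $k$-cycle factors as a product of transpositions via $(a_1\ a_2\ \cdots\ a_k)=(a_1\ a_2)(a_2\ a_3)\cdots(a_{k-1}\ a_k)$. Hence it is enough to write every transposition $(i\ j)$ with $i<j$ as a product of adjacent transpositions, which I would do by induction on $j-i$: the base $j-i=1$ is immediate, and the inductive step uses the conjugation identity $(i\ j)=\sigma_i\,(i+1\ j)\,\sigma_i$. For (2), the three families of relations are direct computations: $\sigma_i^2=1$ since every transposition is an involution; for $|i-j|\geq 2$ the transpositions $\sigma_i$ and $\sigma_j$ have disjoint support and therefore commute, giving $(\sigma_i\sigma_j)^2=1$; and by evaluating on the three points $i,i+1,i+2$ one verifies $\sigma_i\sigma_{i+1}=(i\ i+1\ i+2)$, a $3$-cycle, so $(\sigma_i\sigma_{i+1})^3=1$.

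For (3), by (1) and (2) the universal property of the abstract presentation yields a surjective group homomorphism $\pi:G_n\rightarrow S_n$ sending $\tau_i$ to $\sigma_i$. To obtain an isomorphism it suffices to prove $|G_n|\leq n!$, and I would do this by induction on $n$. The base $n=2$ is clear since $G_2=\langle\tau_1\mid\tau_1^2=1\rangle$ has order $2$. For the inductive step, let $H_{n-1}\subset G_n$ be the subgroup generated by $\tau_1,\ldots,\tau_{n-2}$; the defining relations of $G_{n-1}$ all hold in $H_{n-1}$, so there is a surjection $G_{n-1}\twoheadrightarrow H_{n-1}$, and by the inductive hypothesis $|H_{n-1}|\leq(n-1)!$. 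The combinatorial heart of the argument is then to establish the coset decomposition
\[
G_n \;=\; H_{n-1}\;\cup\;\bigcup_{k=1}^{n-1} H_{n-1}\,\tau_{n-1}\tau_{n-2}\cdots\tau_{n-k},
\]
from which $|G_n|\leq n\cdot|H_{n-1}|\leq n!$ follows.

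To prove the above equality I would verify that the right-hand side is closed under right multiplication by each generator $\tau_j$, since it already contains $1$. Fix a coset representative $w_k=\tau_{n-1}\tau_{n-2}\cdots\tau_{n-k}$ (with $w_0=1$) and consider $w_k\tau_j$. When $j<n-k-1$ or $j>n-1$ does not occur, but the relevant cases split as: $j=n-k-1$, in which case $w_k\tau_j=w_{k+1}$; $j=n-k$, in which case the braid relation $(\tau_{n-k}\tau_{n-k+1})^3=1$ (equivalently $\tau_a\tau_{a+1}\tau_a=\tau_{a+1}\tau_a\tau_{a+1}$) lets one move $\tau_j$ past one factor of $w_k$ at the cost of producing a shorter $w_{k-1}$ times an element of $H_{n-1}$; and $j<n-k-1$ or $j\geq n-k+1$, where the commutation relations $(\tau_i\tau_\ell)^2=1$ for $|i-\ell|\geq 2$ let $\tau_j$ be shifted all the way to the left into $H_{n-1}$. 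This case analysis is the main obstacle, because the braid relations must be applied exactly at the boundary between commuting and non-commuting generators; once it is carried out, $\pi$ is an isomorphism and part (3) is established.
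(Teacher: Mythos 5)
Your overall strategy matches the paper's: both for (1) you reduce to adjacent transpositions via a conjugation identity, (2) is a direct verification, and for (3) both you and the paper define the homomorphism $G_n\to S_n$ from (1)--(2), induct on $n$, and establish the coset decomposition of $G_n$ as the union of $H_{n-1}$ together with $H_{n-1}\tau_{n-1}\tau_{n-2}\cdots\tau_i$ for $i=1,\dots,n-1$ by checking closure under right multiplication by the generators $\tau_j$. Your observation that $|G_n|\leq n!$ suffices, because the surjection onto $S_n$ forces equality, is a slight simplification over the paper, which verifies that the cosets are pairwise disjoint to get $|G_n|=n!$ directly.

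There is, however, a genuine misassignment in the case analysis of (3). With $w_k=\tau_{n-1}\cdots\tau_{n-k}$, the case $j=n-k$ does \emph{not} use the braid relation: one simply has $w_k\tau_{n-k}=\tau_{n-1}\cdots\tau_{n-k}\tau_{n-k}=w_{k-1}$ by $\tau_{n-k}^2=1$. Conversely, in your case ``$j\geq n-k+1$'' you claim that the commutation relations alone shift $\tau_j$ left into $H_{n-1}$, but that is false: for such $j$ the generator $\tau_j$ first commutes past $\tau_{n-k},\dots,\tau_{j-2}$, then meets $\tau_{j-1}\tau_j$, and one needs exactly one application of the braid relation $\tau_j\tau_{j-1}\tau_j=\tau_{j-1}\tau_j\tau_{j-1}$ to produce $\tau_{j-1}$, which then commutes all the way to the left. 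So the braid relation is used for $j\in\{n-k+1,\dots,n-1\}$, and the involution relation for $j=n-k$; swap those attributions and your case analysis closes correctly. This is a local bookkeeping error, not a structural one, and once corrected your argument coincides with the paper's.
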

\begin{proof}
First, we prove (1). All elements of $S_n$ can be written as some product of cycles. For $(i_1\ i_2 \cdots i_q)$, we have
\[ (i_1\ i_2 \cdots i_q)=(i_1\ i_2)(i_2\ i_3)\cdots (i_{q-1}\ i_q).\]
For an element $(i\ j)$ where $i,j=1,\ldots, n$ with $i<j$, we have $(i\ j)=\sigma\sigma_j\sigma^{-1}$ where $\sigma=(i\ i+1\cdots j)=\sigma_i\cdots \sigma_{j-1}$. Then, all elements of $S_n$ are generated by $\sigma_1,\ldots,\sigma_{n-1}$.

It is obvious that the statement (2) holds. So, we prove (3). By the definition of $G_n$ and (2), there exists a surjective group homomorphism $\iota:G_n\rightarrow S_n$ such that $\iota(\tau_i)=\sigma_i$ holds for any integer $i=1,\ldots, n$. 

We prove $|G_n|=n!$ by induction on $n\geq 2$. If $n=2$, we have $G_2=\{1,\tau_1\}$. In particular, the cardinality of $G_2$ is $2$, that is, the statement (3) holds when $n=2$. 

Next, let $n\geq 3$ be an integer and we show $|G_n|=n!$ with the assumption that $G_{n-1}$, which is the subgroup generated by $\tau_1,\ldots,\tau_{n-2}$, satisfies $|G_{n-1}|=(n-1)!$. 

Put 
\[ H_i=\begin{cases} G_{n-1}\tau_{n-1}\tau_{n-2}\cdots\tau_{i} &\mbox{if}\ i=1,\ldots,n-1,\\ G_{n-1} & \mbox{if}\ i=n, \\ \end{cases} \]
 and $H=\bigcup_{i=1}^{n-1}H_i$. Now, we consider $H_i\tau_j$. If $i=n$, then $H_1\tau_j\in H$. We assume $i<n$. If $j>i+1$ then $H_i\tau_j=H_i\subset H$ holds. If $j=i+1$, then $H_i\tau_j=H_{i+1}\subset H$. If $j=i$ then $H_i\tau_j=H_{i-1}$. If $j<i$, then
\begin{eqnarray*}
H_i\tau_j&=&(G_{n-1}\tau_{n-1}\tau_{n-2}\cdots\tau_{i})\tau_j\\
&=&G_{n-1}\tau_{n-1}\tau_{n-2}\cdots\tau_{j+1}\tau_j\tau_{j-1}\tau_j\tau_{j-2}\cdots\tau_i\\
&=&G_{n-1}\tau_{n-1}\tau_{n-2}\cdots\tau_{j+1}\tau_{j-1}\tau_{j}\tau_{j-1}\tau_{j-2}\cdots\tau_i\\
&=&G_{n-1}\tau_{n-1}\tau_{n-2}\cdots\tau_{j+1}\tau_{j}\tau_{j-1}\tau_{j-2}\cdots\tau_i\subset H.
\end{eqnarray*}
In any case, we have $H_i\tau_j\in H$. Thus, all generators $\tau_1,\ldots,\tau_{n-1}$ belong to $H$. Then, we have $G_n=H$. For each $H_i$ and $H_j$, the set $H_i\cap H_j$ has no elements if $i\neq j$. Hence, we have $|G_n|=|H|=n|G_{n-1}|=n!$.
\end{proof}

In \S 4, we denote by $V$ a vector space over $\mathbb{C}$ with a basis $\{v_1,\ldots,v_k\}$.

\begin{defi}
We define a bijective linear map $F:V\otimes V\rightarrow V\otimes V$ by
\[ F(v_i\otimes v_j):=q_{i,j}(v_j\otimes v_i) \]
for any integers $i,j=1,\ldots,k$.
\end{defi}

\begin{prop}\label{Young}
The map $F$ satisfies $F\circ F=\identitymap{V}$ and the Yang Baxter equation. That is, 
\[ (\identitymap{V}\otimes F)\circ (F\otimes \identitymap{V})\circ (\identitymap{V}\otimes F)= (F\otimes \identitymap{V})\circ (\identitymap{V}\otimes F)\circ (F\otimes \identitymap{V}) \]
holds as a linear map on $V\otimes V\otimes V$.
\end{prop}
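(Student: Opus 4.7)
The plan is to verify both identities by a direct computation on the standard basis $\{v_i \otimes v_j \otimes v_l\}$ of $V \otimes V \otimes V$ (or $\{v_i \otimes v_j\}$ of $V \otimes V$ for the first), and to use the defining hypothesis $q_{i,j}q_{j,i} = 1$ together with commutativity of scalars.

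For the involutive property $F \circ F = \identitymap{V \otimes V}$, I would apply $F$ twice to an arbitrary basis vector $v_i \otimes v_j$:
\[
F(F(v_i \otimes v_j)) = F(q_{i,j}(v_j \otimes v_i)) = q_{i,j} q_{j,i}(v_i \otimes v_j) = v_i \otimes v_j,
\]
where the last equality uses the multiplicative anti-symmetry $q_{i,j}q_{j,i}=1$. Since the identity is verified on every basis element and both maps are linear, the equality holds on all of $V \otimes V$.

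For the Yang--Baxter equation, I would compute both sides on a generic basis vector $v_i \otimes v_j \otimes v_l$. Applying the left-hand side step by step gives
\[
(\identitymap{V}\otimes F)(F\otimes \identitymap{V})(\identitymap{V}\otimes F)(v_i\otimes v_j\otimes v_l) = q_{j,l}q_{i,l}q_{i,j}\,(v_l\otimes v_j\otimes v_i),
\]
while the right-hand side yields
\[
(F\otimes \identitymap{V})(\identitymap{V}\otimes F)(F\otimes \identitymap{V})(v_i\otimes v_j\otimes v_l) = q_{i,j}q_{i,l}q_{j,l}\,(v_l\otimes v_j\otimes v_i).
\]
The two scalar coefficients coincide by commutativity of multiplication in $\mathbb{C}$, so the two linear maps agree on every basis vector and hence on all of $V \otimes V \otimes V$.

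Neither step poses a genuine obstacle. The only point that requires a little care is bookkeeping the order in which the factors $q_{a,b}$ appear as one traces the three successive applications on each side; this is purely mechanical, and the anti-symmetry $q_{i,j}q_{j,i}=1$ is not needed for the Yang--Baxter identity itself, only for the involutive identity. One should also note that the statement ``$F\circ F = \identitymap{V}$'' is a harmless typo for $\identitymap{V\otimes V}$, which is what is actually proved.
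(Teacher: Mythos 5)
Your proof is correct and matches the paper's own argument: both verify the involutivity and the Yang--Baxter equation by direct computation on basis vectors, using $q_{i,j}q_{j,i}=1$ for the first and commutativity of scalars for the second. Your remark that ``$F\circ F=\identitymap{V}$'' should read $\identitymap{V\otimes V}$ is a fair observation about a typo in the statement.
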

\begin{proof}
First, we show $F\circ F=\identitymap{V}$. For any integers $i,j=1,\ldots,n$, we have $F\circ F(v_i\otimes v_j)=q_{i,j}F(v_j\otimes v_i)=q_{i,j}q_{j,i}(v_i\otimes v_j)=v_i\otimes v_j$ by the definition of $\masm$. Then, we have $F\circ F=\identitymap{V}$.

Next, we show that the map $F$ satisfies the Yang Baxter equation. For any integers $i_1,i_2,i_3=1,\ldots,k$, we have
\begin{eqnarray*}
&{}&(\identitymap{V}\otimes F)\circ (F\otimes \identitymap{V})\circ (\identitymap{V}\otimes F)(v_{i_1}\otimes v_{i_2}\otimes v_{i_3})\\
&=&q_{i_2,i_3}(\identitymap{V}\otimes F)\circ (F\otimes \identitymap{V})(v_{i_1}\otimes v_{i_3}\otimes v_{i_2}) \\
&=&q_{i_1,i_3}q_{i_2,i_3}(\identitymap{V}\otimes F)(v_{i_3}\otimes v_{i_1}\otimes v_{i_2}) \\
&=&q_{i_1,i_2}q_{i_1,i_3}q_{i_2,i_3}(v_{i_3}\otimes v_{i_2} \otimes v_{i_1}),
\end{eqnarray*}
and
\begin{eqnarray*}
&{}&(F\otimes \identitymap{V})\circ (\identitymap{V}\otimes F)\circ (F\otimes \identitymap{V})(v_{i_1}\otimes v_{i_2}\otimes v_{i_3})\\
&=&q_{i_1,i_2}(F\otimes \identitymap{V})\circ (\identitymap{V}\otimes F)(v_{i_2}\otimes v_{i_1}\otimes v_{i_3}) \\
&=&q_{i_1,i_3}q_{i_1,i_2}(\identitymap{V}\otimes F)(v_{i_2}\otimes v_{i_3}\otimes v_{i_1}) \\
&=&q_{i_2,i_3}q_{i_1,i_3}q_{i_1,i_2}(v_{i_3}\otimes v_{i_2} \otimes v_{i_1}).
\end{eqnarray*}
Hence, we have $(\identitymap{V}\otimes F)\circ (F\otimes \identitymap{V})\circ (\identitymap{V}\otimes F)= (F\otimes \identitymap{V})\circ (\identitymap{V}\otimes F)\circ (F\otimes \identitymap{V})$.
\end{proof}

\begin{defi}
For each integer $n\geq 1$, we define a representation $\rho_{\masm,n}:S_n\rightarrow GL(V^{\otimes n})$ by
\[ \rho_{\masm, n}(\sigma_i):=\identitymap{V}^{\otimes (i-1)}\otimes F \otimes \identitymap{V}^{\otimes(n-i-1)} \]
where $\sigma_i=(i\ i+1)$.
\end{defi}
By Lemma \ref{generatorS} and Proposition \ref{Young}, this representation is well-defined.

\subsection{A representation matrix of $\rho_{\masm,n}(\sigma)$}\label{QQmatrix}
In \S \ref{QQmatrix}, for any $\sigma\in S_n$, we consider a representation matrix of the linear map $\rho_{\masm,n}(\sigma)$ with respect to the basis $B_n$ on $V^{\otimes n}$, which is defined by
\begin{eqnarray*}
&\{&v_{1}\otimes\cdots\otimes v_{1},\ldots,v_{1}\otimes\cdots\otimes v_{k},\\
&{}&\ldots,\\
&{}&v_{k}\otimes\cdots\otimes v_{1},\ldots,v_{k}\otimes\cdots\otimes v_{k}\}.
\end{eqnarray*}

We define the following two serieses of matrices.

\begin{defi}
For any integers $i,j=1,\ldots,k$ and $l\geq 0$, we define matrices $F_{l,j,i}\in M(k^l,\mathbb{C})$ by induction on $l\geq 0$, and
\begin{eqnarray*}
F_{0,j,i}&:=&\delta_{j,i},\\
F_{l,j,i}&:=&q_{j,i}\left( \begin{array}{@{\,}ccc@{\,}}
 & \bigzerou & \\
F_{l-1,1,i} & \ldots & F_{l-1,k,i} \\
 & \bigzerol & 
\end{array}\right)\ (l\geq 1)
\end{eqnarray*}
where matrices $F_{l-1,1,i},\ldots,F_{l-1,k,i}$ lies on $j$-th row of $F_{l,j,i}$ for $l\geq 1$.
\end{defi}
\begin{defi}
For any integer $l\geq 1$, we define a matrix $P(\masm,l) \in M(k^l,\mathbb{C})$ by $P(\masm,l):=(F_{l-1,j,i})_{i,j}$.
\end{defi}


First, we consider $\rho_{\masm,n}(\sigma)$ when $\sigma$ is a cycle of length $p$.

\begin{prop}\label{repmat}
Let $m$ and $p$ be integers with $m+(p-1)<n$. Then the matrix of the linear map $\rho_{\masm,n}((m\ m+1\cdots m+(p-1)))$ with respect to the basis $B_n$ is equal to $I_k^{\otimes(m-1)}\otimes P(\masm,p)\otimes I_k^{(n-m+(p-1))}$.
\end{prop}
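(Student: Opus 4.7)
The plan is to decompose the given cycle as a product of adjacent transpositions, factor the resulting tensor operator to isolate its action on the $p$ relevant positions, and then match that local operator with $P(\masm, p)$ by a direct calculation on basis vectors.

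First, I would use the identity
\[
(m\ m{+}1\ \cdots\ m{+}(p{-}1)) = \sigma_m\sigma_{m+1}\cdots\sigma_{m+p-2}
\]
in $S_n$ (with composition read right-to-left), which is verified by tracking the image of each element of $\{m,\dots,m+p-1\}$. Because $\rho_{\masm,n}$ is a representation, its value on this cycle is the product $\rho_{\masm,n}(\sigma_m)\cdots\rho_{\masm,n}(\sigma_{m+p-2})$. Each factor acts by $F$ on positions $m+i,\,m+i+1$ and by $\identitymap{V}$ on the other slots, so the whole product is the identity on positions $1,\dots,m-1$ and on positions $m+p,\dots,n$. Hence it factors as $\identitymap{V}^{\otimes(m-1)} \otimes A_p \otimes \identitymap{V}^{\otimes(n-m-p+1)}$ for a single operator $A_p$ on $V^{\otimes p}$, and it suffices to prove $A_p = P(\masm, p)$ as matrices in the induced basis of $V^{\otimes p}$.

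Second, I would evaluate $A_p$ on a basis vector directly. Applying the transpositions from right to left, the first swap puts $v_{a_p}$ in position $p{-}1$ with coefficient $q_{a_{p-1},a_p}$, and each subsequent $\sigma_{m+i}$ exchanges $v_{a_p}$ with the next $v_{a_r}$ and multiplies by $q_{a_r,a_p}$. Carrying $v_{a_p}$ all the way to the front yields
\[
A_p(v_{a_1}\otimes v_{a_2}\otimes\cdots\otimes v_{a_p}) = \Big(\prod_{r=1}^{p-1} q_{a_r, a_p}\Big)\, v_{a_p}\otimes v_{a_1}\otimes\cdots\otimes v_{a_{p-1}}.
\]

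Third, I would match this against $P(\masm, p)$. By induction on $l$, using that the only non-zero block-row of $F_{l, j, i}$ is its $j$-th row whose $s$-th block is $q_{j,i} F_{l-1, s, i}$, one shows
\[
F_{l, j, i}(v_{b_1}\otimes\cdots\otimes v_{b_l}) = \delta_{b_l, i}\, q_{j, i}\prod_{r=1}^{l-1} q_{b_r, i}\cdot v_j\otimes v_{b_1}\otimes\cdots\otimes v_{b_{l-1}},
\]
the induction step being $F_{l, j, i}(v_{b_1}\otimes\cdots\otimes v_{b_l}) = q_{j, i}\,v_j\otimes F_{l-1, b_1, i}(v_{b_2}\otimes\cdots\otimes v_{b_l})$. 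Substituting $l=p-1$, reading $P(\masm, p) = (F_{p-1, j, i})_{i, j}$ blockwise, and summing over the row-block index $i$ collapses the sum via the Kronecker delta (forcing $i=a_p$) and reproduces exactly the formula above for $A_p$. This gives $A_p = P(\masm, p)$ and hence the claimed factorisation.

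The main obstacle is the careful bookkeeping of conventions: the non-zero block-row of $F_{l, j, i}$ sits in row $j$ (with the indices of the inner blocks varying over the column), while $P(\masm, p) = (F_{p-1, j, i})_{i, j}$ uses the \emph{swapped} indexing so that the block at block-row $i$, block-column $j$ is $F_{p-1, j, i}$. This index swap is precisely what converts the successive adjacent swaps in the product $\sigma_m\sigma_{m+1}\cdots\sigma_{m+p-2}$ into the single cyclic shift $(v_{a_1},\dots,v_{a_p})\mapsto(v_{a_p},v_{a_1},\dots,v_{a_{p-1}})$, and must be threaded through the induction without error.
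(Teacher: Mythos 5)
Your proof is correct, and it takes a genuinely different route from the paper's.

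The paper proves the result by induction on $p$: it first establishes, via Lemma \ref{lemm1-1}, that the matrix of $F$ in the basis $B_2$ is $P(\masm,2)$, then proves the matrix recursion $P(\masm,l)=(P(\masm,l-1)\otimes I_k)(I_k^{\otimes(l-2)}\otimes P(\masm,2))$ as a block-matrix identity (Lemmas \ref{lemm1-11} and \ref{lemm2-1}), and finally transfers this recursion to the representation using $(m\ \cdots\ m+(p-1))=(m\ \cdots\ m+(p-2))(m+(p-2)\ m+(p-1))$. You instead compute both sides on a basis vector: the full product of adjacent transpositions gives the cyclic shift $v_{a_1}\otimes\cdots\otimes v_{a_p}\mapsto\bigl(\prod_{r<p}q_{a_r,a_p}\bigr)\,v_{a_p}\otimes v_{a_1}\otimes\cdots\otimes v_{a_{p-1}}$, and a single induction shows that the blocks $F_{l,j,i}$ act by $F_{l,j,i}(v_{b_1}\otimes\cdots\otimes v_{b_l})=\delta_{b_l,i}\,q_{j,i}\prod_{r<l}q_{b_r,i}\cdot v_j\otimes v_{b_1}\otimes\cdots\otimes v_{b_{l-1}}$, so that $P(\masm,p)$ reproduces exactly the same cyclic shift. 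What your approach buys is economy: you bypass the intermediate Kronecker-product manipulations of Lemmas \ref{lemm1-11} and \ref{lemm2-1} entirely, replacing three matrix inductions with one basis-vector induction, and the cyclic-shift formula you derive is an explicit and reusable description of $P(\masm,p)$ that the paper never writes down. What the paper's approach buys is the standalone factorization identity $P(\masm,l)=(P(\masm,l-1)\otimes I_k)(I_k^{\otimes(l-2)}\otimes P(\masm,2))$, which has independent interest as a matrix statement and mirrors more transparently the $R$-matrix braiding structure. A minor point worth noting: the proposition as stated writes the last identity factor as $I_k^{(n-m+(p-1))}$, but the exponent should be $n-m-p+1$ so that the three factors account for all $n$ tensor slots; your $\identitymap{V}^{\otimes(n-m-p+1)}$ is the correct count.
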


To prove Proposition \ref{repmat}, we state the following lemmas.

\begin{lemm}\label{lemm1-1}
The matrix of the linear map $F:V\otimes V\rightarrow V\otimes V$ with respect to the basis $B_2$ is $P(\masm,2)$.
\end{lemm}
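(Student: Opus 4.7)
The statement is a routine unpacking of definitions, so the plan is to verify entry-by-entry that the matrix of $F$ in the ordered basis $B_2$ coincides with $P(\masm,2)$.

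First I would fix the ordering convention on $B_2$: the basis vectors are $v_a\otimes v_b$, ordered lexicographically by the pair $(a,b)$, so that the natural block decomposition is into $k\times k$ blocks indexed by $(a,b)$, where the $(a,b)$-block's $(c,d)$-entry is the coefficient of $v_a\otimes v_c$ in the image of $v_b\otimes v_d$. Using the defining formula $F(v_b\otimes v_d)=q_{b,d}(v_d\otimes v_b)$, this coefficient is $q_{b,d}$ if $(a,c)=(d,b)$ and $0$ otherwise. In particular, the $(a,b)$-block of the matrix of $F$ is the $k\times k$ matrix whose only nonzero entry sits at position $(b,a)$ and equals $q_{b,a}$.

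Next I would unpack $P(\masm,2)=(F_{1,j,i})_{i,j}$, so that the $(a,b)$-block of $P(\masm,2)$ is $F_{1,b,a}$. From the recursion $F_{1,j,i}=q_{j,i}\left(\begin{array}{ccc} & \bigzerou & \\ F_{0,1,i} & \cdots & F_{0,k,i} \\ & \bigzerol & \end{array}\right)$ with the row of $F_{0,s,i}=\delta_{s,i}$ sitting in the $j$-th row, I would observe that $F_{1,j,i}$ is the $k\times k$ matrix whose unique nonzero entry is at position $(j,i)$ with value $q_{j,i}$. Specializing $j=b$, $i=a$, the $(a,b)$-block of $P(\masm,2)$ is exactly the matrix with entry $q_{b,a}$ at position $(b,a)$ and zero elsewhere.

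Comparing the two descriptions block by block yields the lemma. There is no real obstacle here; the only care needed is the transposed indexing convention $(F_{1,j,i})_{i,j}$ versus the usual row/column order, and the fact that the basis ordering on $B_2$ places the left tensor factor as the ``outer'' (block) index while the right tensor factor indexes within a block. Once these are fixed, the identification is immediate from $F(v_i\otimes v_j)=q_{i,j}(v_j\otimes v_i)$.
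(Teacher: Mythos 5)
Your proof is correct and amounts to the same direct unpacking of definitions as the paper's; the paper organizes it by computing the images of the row of basis vectors $(F(v_j\otimes v_1),\ldots,F(v_j\otimes v_k))$ and reading off the $j$-th block column of $P(\masm,2)$, whereas you compare blocks entry by entry, but both hinge on the identical observation that $F_{1,j,i}$ is $q_{j,i}$ times the elementary matrix with a single nonzero entry at $(j,i)$ and that the block indexing of $(F_{1,j,i})_{i,j}$ is transposed. The two write-ups are equivalent in substance.
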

\begin{proof}
For any integer $j=1,\ldots, k$, we have
\begin{eqnarray*}
&{}&(F(v_j\otimes v_1),\ldots,F(v_j\otimes v_k))\\
&=&(q_{j,i}(v_1\otimes v_j),\ldots,q_{j,n}(v_k\otimes v_j))\\
&=&\sum_{p=1}^k (v_p\otimes v_1,\ldots, v_p\otimes v_k)F_{1,j,p} \\
&=&(v_1\otimes v_1,\ldots,v_1\otimes v_k,\ldots,v_k\otimes v_1,\ldots, v_k\otimes v_k)\left ( \begin{array}{@{\,}c@{\,}}F_{1,j,1} \\ \vdots \\ F_{1,j,k} \end{array} \right ).
\end{eqnarray*}
Hence, this lemma holds.
\end{proof}

\begin{lemm}\label{lemm1-11}
For any integer $l\geq 2$,
\begin{eqnarray}\label{eqn1}
F_{l,j,i}=(F_{l-1,j,i}\otimes I_k)(I_k^{\otimes(l-2)}\otimes P(\masm,2))
\end{eqnarray}
holds.
\end{lemm}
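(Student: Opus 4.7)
The plan is to prove the identity by comparing both sides as linear operators on $V^{\otimes l}$ in the basis $B_l$, rather than wrestling with the two distinct block-matrix decompositions that appear on the two sides. The key preliminary step is to establish, by induction on $l$, the closed-form action
\[
F_{l,j,i}(v_{a_1}\otimes\cdots\otimes v_{a_l})
\;=\;
q_{j,i}\Bigl(\prod_{p=1}^{l-1}q_{a_p,i}\Bigr)\delta_{a_l,i}\,v_j\otimes v_{a_1}\otimes\cdots\otimes v_{a_{l-1}}.
\]
This is immediate from $F_{1,j,i}=q_{j,i}E_{j,i}$ when $l=1$, and for $l\geq 2$ the recursive definition of $F_{l,j,i}$ directly yields $F_{l,j,i}(v_{a_1}\otimes w)=q_{j,i}\,v_j\otimes F_{l-1,a_1,i}(w)$ for $w\in V^{\otimes(l-1)}$, so the inductive hypothesis completes the step.

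Next, I will compute the action of the right-hand side on the same basis element $v_{a_1}\otimes\cdots\otimes v_{a_l}$. Using Lemma \ref{lemm1-1}, the operator $I_k^{\otimes(l-2)}\otimes P(\masm,2)$ fixes the first $l-2$ tensor slots and sends $v_{a_{l-1}}\otimes v_{a_l}\mapsto q_{a_{l-1},a_l}\,v_{a_l}\otimes v_{a_{l-1}}$. Then $F_{l-1,j,i}\otimes I_k$ applies $F_{l-1,j,i}$ to the first $l-1$ slots (now ending in $v_{a_l}$) and leaves the last slot $v_{a_{l-1}}$ untouched. Substituting the closed form for $F_{l-1,j,i}$ produces the Kronecker factor $\delta_{a_l,i}$ and scalar $q_{j,i}\prod_{p=1}^{l-2}q_{a_p,i}$. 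Combining with the prefactor $q_{a_{l-1},a_l}$ and using $a_l=i$ (forced by the delta) to rewrite $q_{a_{l-1},a_l}=q_{a_{l-1},i}$, the total scalar becomes $q_{j,i}\prod_{p=1}^{l-1}q_{a_p,i}$, and the surviving tensor is $v_j\otimes v_{a_1}\otimes\cdots\otimes v_{a_{l-2}}\otimes v_{a_{l-1}}$, matching the closed form for $F_{l,j,i}$.

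The main obstacle is index bookkeeping across the two natural tensor decompositions: the recursion for $F_{l,j,i}$ is cleanest via $V^{\otimes l}=V\otimes V^{\otimes(l-1)}$ (outer block index equals the first slot), whereas the factorization $F_{l-1,j,i}\otimes I_k$ on the RHS is cleanest via $V^{\otimes l}=V^{\otimes(l-1)}\otimes V$ (identity on the last slot). Once both sides are expressed as explicit actions on $v_{a_1}\otimes\cdots\otimes v_{a_l}$ via the closed form above, what remains is a short scalar identity, but the translation between the two decompositions — and tracking which slot $P(\masm,2)$ swaps with which scalar factor — is where the care is needed.
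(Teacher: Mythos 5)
Your argument is correct, and it takes a genuinely different route from the paper's. The paper proves the identity directly by induction on $l$ at the level of $k\times k$ block matrices with $k^{l-1}\times k^{l-1}$ blocks: the $l=2$ base case is a two-block multiplication, and the inductive step writes out $F_{l-1,j,i}\otimes I_k$ and the block-diagonal $I_k^{\otimes(l-2)}\otimes P(\masm,2)$ as block arrays, then applies the hypothesis block by block. You instead first establish, by an independent and simpler induction, the closed-form action of $F_{l,j,i}$ on tensor-basis vectors, and then verify the identity by a direct computation on a generic basis vector with no induction on the identity itself: the $\delta_{a_l,i}$ factor absorbs the $q_{a_{l-1},a_l}$ produced by $P(\masm,2)$, and the two tensor-slot reorderings line up. Your key step $F_{l,j,i}(v_{a_1}\otimes w)=q_{j,i}\,v_j\otimes F_{l-1,a_1,i}(w)$ correctly reads off the block-row definition under the ordering of $B_l$ (block index equals first tensor slot), so the recursion and the resulting closed form are right. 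What this buys is transparency: the two distinct nested block decompositions the paper juggles become explicit scalar bookkeeping. The cost is carrying one extra preparatory lemma, but the verification is otherwise shorter and the mechanism behind the identity is clearer.
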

\begin{proof}
We show this lemma by induction on $l\geq 2$. If $l=2$, then the identity (\ref{eqn1}) holds by
\begin{eqnarray*}
(F_{1,j,i}\otimes I_k)P(\masm,2)&=&q_{j,i}\left( \begin{array}{@{\,}ccc@{\,}}
\bigzerou & & \bigzerou \\
& I_k& \\
\bigzerol & & \bigzerol 
\end{array}\right)
\left( \begin{array}{@{\,}ccc@{\,}}
F_{1,1,1}& \ldots & F_{1,k,1}\\
\vdots& \ddots & \vdots \\
F_{1,1,k}& \ldots & F_{1,k,k}
\end{array}\right) \\
&=&\left( \begin{array}{@{\,}ccc@{\,}}
 & \bigzerou & \\
F_{1,1,i} & \ldots & F_{1,k,i} \\
 & \bigzerol & 
\end{array}\right)=F_{2,j,i}
\end{eqnarray*}
where the matrix $I_k$ is the $(j,i)$ block in $(F_{1,j,i}\otimes I_k)$ and matrices $F_{1,1,i},\ldots, F_{1,k,i} $ lies on $j$-th row.

We assume that the identity (\ref{eqn1}) holds for any integer $m\geq l-1$. By this assumption, we have
\begin{eqnarray*}
&{}&(F_{l-1,j,i}\otimes I_k)(I_k^{\otimes(l-2)}\otimes P(\masm,2)) \\
&=&\left( \begin{array}{@{\,}ccc@{\,}}
 & \bigzerou & \\
q_{j,i}(F_{l-2,1,i}\otimes I_k) & \ldots & q_{j,i}(F_{l-2,k,i}\otimes I_k) \\
 & \bigzerol & 
\end{array}\right)\\
&{}&\left( \begin{array}{@{\,}ccc@{\,}}
I_k^{\otimes(l-3)}\otimes P(\masm,2)& &\bigzerou \\
& \ddots & \\
\bigzerol & & I_k^{\otimes(l-3)}\otimes P(\masm,2)
\end{array}\right) \\
&=&q_{j,i}\left( \begin{array}{@{\,}ccc@{\,}}
 & \bigzerou & \\
F_{l-1,1,i} & \ldots & F_{l-1,k,i} \\
 & \bigzerol & 
\end{array}\right)=F_{l,j,i}
\end{eqnarray*}
where matrices $q_{j,i}(F_{l-2,1,i}\otimes I_k),\ldots,q_{j,i}(F_{l-2,k,i}\otimes I_k)$ and $F_{l-1,1,i}, \ldots,$ $F_{l-1,k,i}$ lie on $j$-th row. Hence, the identity (\ref{eqn1}) holds for any $l\geq 2$. 
\end{proof}

\begin{lemm}\label{lemm2-1}
For any integer $l\geq 2$, 
\[ P(\masm,l)=(P(\masm,l-1)\otimes I_k)(I_k^{\otimes(l-2)}\otimes P(\masm,2))\]
holds.
\end{lemm}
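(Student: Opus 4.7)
The plan is to view both sides as $k\times k$ block matrices with blocks of size $k^{l-1}\times k^{l-1}$ and to check equality blockwise, reducing everything to the already-proved recursion of Lemma \ref{lemm1-11}.

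First I would recall that by definition $P(\masm,l)=(F_{l-1,j,i})_{i,j}$, so the $(i,j)$-block of $P(\masm,l)$ is precisely $F_{l-1,j,i}$. Next, I would analyse the right-hand side blockwise. Since $P(\masm,l-1)$ has $(i,j)$-block $F_{l-2,j,i}$, the factor $P(\masm,l-1)\otimes I_k$, viewed as a $k\times k$ block matrix with $k^{l-1}\times k^{l-1}$ blocks, has $(i,j)$-block equal to $F_{l-2,j,i}\otimes I_k$. The factor $I_k^{\otimes(l-2)}\otimes P(\masm,2)$ can be rewritten as $I_k\otimes\bigl(I_k^{\otimes(l-3)}\otimes P(\masm,2)\bigr)$, which, under the same block decomposition, is block-diagonal with each of the $k$ diagonal blocks equal to $I_k^{\otimes(l-3)}\otimes P(\masm,2)$.

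Multiplying these two $k\times k$ block matrices and using this block-diagonal structure to collapse the inner sum, the $(i,j)$-block of the product becomes
\[
(F_{l-2,j,i}\otimes I_k)\bigl(I_k^{\otimes(l-3)}\otimes P(\masm,2)\bigr),
\]
which by Lemma \ref{lemm1-11} equals $F_{l-1,j,i}$. Thus the $(i,j)$-blocks of both sides agree, giving the claimed identity for $l\geq 3$.

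The case $l=2$ must be handled separately, because Lemma \ref{lemm1-11} applies to $F_{m,j,i}$ only for $m\geq 2$. This is the only delicate point, but it is immediate: $P(\masm,1)=(F_{0,j,i})_{i,j}=(\delta_{j,i})_{i,j}=I_k$, so the right-hand side reduces to $I_{k^2}\cdot P(\masm,2)=P(\masm,2)$. The main (mild) obstacle is keeping the block-index conventions and the tensor-product identifications consistent; once that bookkeeping is set up, the proof is a one-line blockwise computation combined with Lemma \ref{lemm1-11}.
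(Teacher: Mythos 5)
Your proof is correct and follows exactly the same approach as the paper: decompose both sides into $k\times k$ block form with blocks of size $k^{l-1}\times k^{l-1}$, use the block-diagonal structure of $I_k^{\otimes(l-2)}\otimes P(\masm,2)$ to collapse the product, invoke Lemma \ref{lemm1-11} to identify the $(i,j)$-block with $F_{l-1,j,i}$ for $l\geq 3$, and verify $l=2$ directly using $P(\masm,1)=I_k$.
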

\begin{proof}
We use (\ref{eqn1}) to prove this lemma. If $l=2$, then we have
\[ (P(\masm,1)\otimes I_k)P(\masm,2)=I_{k^2}P(\masm,2)=P(\masm,2). \]
If $l\geq 3$, then we have
\begin{eqnarray*}
&{}&(P(\masm,l-1)\otimes I_k)(I_k^{\otimes(l-2)}\otimes P(\masm,2)) \\
&=&(F_{l-2,j,i}\otimes I_k)_{i,j} \left( \begin{array}{@{\,}ccc@{\,}}
I_k^{\otimes(l-3)}\otimes P(\masm,2)& & \bigzerou\\
& \ddots & \\
\bigzerol& & I_k^{\otimes(l-3)}\otimes P(\masm,2)
\end{array}\right) \\
&=&((F_{l-2,j,i}\otimes I_k)(I_k^{\otimes(l-3)}\otimes P(\masm,2)))_{i,j}=(F_{l-1,j,i})_{i,j}=P(\masm,l).
\end{eqnarray*}
Hence, this lemma holds.
\end{proof}

\begin{proof}[Proof of Proposition \ref{repmat}]
Let $\rho'_{\masm,n}(\sigma)$ be a matrix of the linear map $\rho_{\masm,n}(\sigma)$ with respect to the basis $B_n$. We prove Proposition \ref{repmat} by induction on $p\geq 2$. 

We assume $p=2$. In this case, we have $\rho'_{\masm,n}((m\ m+1))=\identitymap{V}^{\otimes i-1}\otimes P(\masm,2) \otimes \identitymap{V}^{\otimes(n-i+1)}$ by Lemma \ref{lemm1-1}. Thus, this proposition holds if $p=2$.

For any integer $p\geq 2$, we suppose that this proposition holds when $p-1$. For any integer $m\geq 1$, 
\[ (m\ m+1\cdots m+(p-1))=(m\ m+1\cdots m+(p-2))(m+(p-2)\ m+(p-1))\]
holds. Thus,
\begin{eqnarray*}
&{}&\rho'_{\masm,n}((m\ m+1\cdots m+(p-1)))\\
&=&\rho'_{\masm,n}((m\ m+1\cdots m+(p-2)))(\rho'_{\masm,n}((m+(p-2)\ m+(p-1)))\\
&=&(I_k^{\otimes(m-1)}\otimes (P(\masm,p-1)\otimes I_k)\otimes I_k^{\otimes(n-m+(p-1))}) \\
&{}&(I_k^{\otimes(m-1)}\otimes (I^{\otimes (p-2)}\otimes P(\masm,2))\otimes I_k^{\otimes(n-m+(p-1))})\\
&=& I_k^{\otimes(m-1)}\otimes P(\masm,p)\otimes I_k^{\otimes (n-m+(p-1))}
\end{eqnarray*}
holds. Hence, this proposition holds.
\end{proof}

By Proposition \ref{repmat}, the following corollary holds.

\begin{cor}\label{char1}
Put $\pi_1=(1\ 2\cdots \lambda_1), \pi_2=(\lambda_1+1\ \cdots\ \lambda_1+\lambda_2),\ldots,\pi_q=(\lambda_1+\cdots+\lambda_{q-1}+1\ \cdots\ n)$ $\in S_n$. Then, a matrix of $\rho_{\masm,n}(\pi_1\cdots\pi_q)$ with respect to the basis $B_n$ is $P(\masm,\lambda_1)\otimes\cdots\otimes P(\masm,\lambda_q)$.
\end{cor}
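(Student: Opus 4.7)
The plan is to derive Corollary \ref{char1} as a direct consequence of Proposition \ref{repmat} combined with the homomorphism property of $\rho_{\masm,n}$ and the tensor multiplication identity $(A_1\otimes B_1)(A_2\otimes B_2)=(A_1A_2)\otimes(B_1B_2)$.

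First I would observe that since $\rho_{\masm,n}$ is a group homomorphism, we may split
\[ \rho_{\masm,n}(\pi_1\cdots\pi_q)=\rho_{\masm,n}(\pi_1)\,\rho_{\masm,n}(\pi_2)\,\cdots\,\rho_{\masm,n}(\pi_q), \]
so it suffices to compute the representation matrix of each factor with respect to $B_n$ and then multiply. Writing $m_j:=\lambda_1+\cdots+\lambda_{j-1}+1$ so that $\pi_j=(m_j\ m_j+1\ \cdots\ m_j+\lambda_j-1)$, Proposition \ref{repmat} gives immediately
\[ \rho_{\masm,n}(\pi_j)=I_k^{\otimes(m_j-1)}\otimes P(\masm,\lambda_j)\otimes I_k^{\otimes(\lambda_{j+1}+\cdots+\lambda_q)} \]
with respect to $B_n$, that is, a tensor product with $P(\masm,\lambda_j)$ in the $j$-th block of consecutive tensor factors and the identity in every other block.

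Next I would combine the factors. Splitting the total $n$ tensor positions into the $q$ consecutive blocks of sizes $\lambda_1,\ldots,\lambda_q$, the matrix of $\rho_{\masm,n}(\pi_j)$ is the tensor product
\[ I_k^{\otimes\lambda_1}\otimes\cdots\otimes I_k^{\otimes\lambda_{j-1}}\otimes P(\masm,\lambda_j)\otimes I_k^{\otimes\lambda_{j+1}}\otimes\cdots\otimes I_k^{\otimes\lambda_q}. \]
Iterating the identity $(A_1\otimes B_1)(A_2\otimes B_2)=(A_1A_2)\otimes(B_1B_2)$ block by block, the product $\rho_{\masm,n}(\pi_1)\cdots\rho_{\masm,n}(\pi_q)$ equals the tensor product of block-wise products, and in the $j$-th block all factors but the one coming from $\pi_j$ are the identity, leaving $P(\masm,\lambda_j)$. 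This yields
\[ \rho_{\masm,n}(\pi_1\cdots\pi_q)=P(\masm,\lambda_1)\otimes\cdots\otimes P(\masm,\lambda_q) \]
as required.

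There is no serious obstacle here: the only thing to be careful about is bookkeeping of the indices so that the blocks of tensor factors corresponding to each $\pi_j$ are correctly matched and the ``identity'' parts line up in every multiplication step. Everything else is an immediate application of Proposition \ref{repmat} and the standard compatibility of matrix multiplication with Kronecker products.
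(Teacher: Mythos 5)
Your proof is correct and follows exactly the route the paper intends (the paper merely states that the corollary follows ``by Proposition \ref{repmat}'' without further elaboration): express $\rho_{\masm,n}(\pi_1\cdots\pi_q)$ as a product via the homomorphism property, apply Proposition \ref{repmat} to each cycle, and collapse the product of Kronecker products block by block using $(A_1\otimes B_1)(A_2\otimes B_2)=(A_1A_2)\otimes(B_1B_2)$. The only cosmetic remark is that Proposition \ref{repmat} as printed requires $m+(p-1)<n$, which does not literally cover the last cycle $\pi_q$ (where $m+(p-1)=n$); this is a small off-by-one in the paper's hypothesis rather than a gap in your argument.
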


\subsection{Calculation of $E_{Nr}(\lambda_t(\chi))$}\label{QQnecklace}
In \S \ref{QQnecklace}, we show the calculation method of $E_{Nr}(\lambda_t(\chi_{\masm,n}))$, where $\chi_{\masm,n}$ is the character of the representation $\rho_{\masm,n}$.

One has $\phi (E_{Nr}(\lambda_t(\chi_{\masm,n})))=z\circ\lambda_t(\chi_{\masm,n})$, hence its $n$-th element is $\psi^n(\chi_{\masm,n})$. Then, we investigate $\chi_{\masm,n}$ in order to calculate $E_{Nr}(\lambda_t(\chi))$. We investigate $\chi_{\masm,n}$ by the following propositions.

\begin{prop}\label{char2}
For any integer $l\geq 2$, we have
\[ \mathrm{Tr}(P(\masm,l))=\begin{cases} \mathrm{Tr}(\masm) & \mbox{if}\ 2\mid l, \\ k & \mbox{if}\ 2\nmid l.\end{cases}\]
In particular, the number $\mathrm{Tr}(P(\masm,l))$ is an integer.
\end{prop}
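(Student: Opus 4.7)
The plan is to reduce the trace of $P(\masm,l)$ to traces of the auxiliary blocks $F_{l-1,i,i}$, then iterate the recursive definition of $F_{l,j,i}$ to obtain a closed form depending only on the diagonal entries $q_{i,i}$ of $\masm$.

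First, I would use the definition $P(\masm,l)=(F_{l-1,j,i})_{i,j}$ to read off that the $(i,i)$-block of $P(\masm,l)$ is $F_{l-1,i,i}$, so
\[ \mathrm{Tr}(P(\masm,l))=\sum_{i=1}^{k}\mathrm{Tr}(F_{l-1,i,i}). \]
Next I would examine the block structure of $F_{l,j,i}$: viewed as a $k\times k$ block matrix with $k^{l-1}\times k^{l-1}$ blocks, every block-row other than the $j$-th is zero, and the $(j,p)$-block equals $q_{j,i}F_{l-1,p,i}$. Hence the only diagonal block that can be nonzero is the $(j,j)$-block, which equals $q_{j,i}F_{l-1,j,i}$, giving the recurrence
\[ \mathrm{Tr}(F_{l,j,i})=q_{j,i}\,\mathrm{Tr}(F_{l-1,j,i}). \]
Specializing to $j=i$ and iterating down to $F_{0,i,i}=\delta_{i,i}=1$ yields $\mathrm{Tr}(F_{l,i,i})=q_{i,i}^{l}$.

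Combining these two observations gives
\[ \mathrm{Tr}(P(\masm,l))=\sum_{i=1}^{k}q_{i,i}^{\,l-1}. \]
The defining condition $q_{i,j}q_{j,i}=1$ applied with $j=i$ forces $q_{i,i}^{2}=1$, so $q_{i,i}\in\{\pm 1\}$. When $l$ is odd, $l-1$ is even and each summand equals $1$, producing $k$; when $l$ is even, $l-1$ is odd and each summand equals $q_{i,i}$, producing $\sum_{i}q_{i,i}=\mathrm{Tr}(\masm)$. In either case the value lies in $\mathbb{Z}$ because each $q_{i,i}$ is $\pm 1$, so $\mathrm{Tr}(\masm)$ is an integer.

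No serious obstacle is anticipated. The only delicate point is parsing the nested block indexing in the definition of $F_{l,j,i}$ correctly, that is, verifying that exactly one block-row of $F_{l,j,i}$ is nonzero and that the diagonal position within that row is the $(j,j)$-block. Once that is clear, the trace recurrence $\mathrm{Tr}(F_{l,j,i})=q_{j,i}\mathrm{Tr}(F_{l-1,j,i})$ is immediate, and the remainder of the argument is a one-line induction followed by the parity case split.
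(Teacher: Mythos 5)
Your argument is correct and follows the same route as the paper: both reduce $\mathrm{Tr}(P(\masm,l))$ to $\sum_{i=1}^k\mathrm{Tr}(F_{l-1,i,i})$, establish (directly or via the recurrence $\mathrm{Tr}(F_{l,j,i})=q_{j,i}\mathrm{Tr}(F_{l-1,j,i})$) that $\mathrm{Tr}(F_{l,i,i})=q_{i,i}^l$, and then use $q_{i,i}^2=1$ together with the parity of $l-1$. The only difference is cosmetic: the paper carries the Kronecker $\delta_{j,i}$ through a full induction on $\mathrm{Tr}(F_{l,j,i})$, whereas you specialize to $j=i$ before iterating, and you spell out the block-structure reason for the trace recurrence a bit more explicitly.
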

\begin{proof}
First, we prove $\mathrm{Tr}(F_{l,j,i})=\delta_{j,i}q_{j,i}^l$ by induction on $l \geq 0$. If $l=0$, then we have $\mathrm{Tr}(F_{0,j,i})=\delta_{j,i}=\delta_{j,i}q_{j,i}^{0}$. Let $l\geq 1$ be an integer and we assume that $\mathrm{Tr}(F_{l-1,j,i})=\delta_{j,i}q_{j,i}^{l-1}$. Then, we have
\[ \mathrm{Tr}(F_{l,j,i})=\mathrm{Tr}(q_{j,i}F_{l-1,j,i})=q_{j,i}\delta_{j,i}q_{j,i}^{l-1}=\delta_{j,i}q_{j,i}^l.\]
For $P(\masm,l)$, we have
\[ \mathrm{Tr}(P(\masm,l))=\sum_{i=1}^k \mathrm{Tr}(F_{l-1,i,i})=\sum_{i=1}^k q_{i,i}^{l-1} \]
Thus, for any integer $i=1,\ldots, k$ we have
\[ q_{i,i}^{l-1}=\begin{cases}q_{i,i} &\mbox{if }2\mid l, \\ 1 & \mbox{if}\ 2\nmid l.\end{cases}\]
Hence, we have
\[ \mathrm{Tr}(P(\masm,l))=\sum_{i=1}^k q_{i,i}^{l-1}=\begin{cases} \mathrm{Tr}(\masm) & \mbox{if}\ 2\mid l, \\ k & \mbox{if}\ 2\nmid l.\end{cases}\]
By the definition of $\masm$, we have $q_{i,i}q_{i,i}=1$, which means that $q_{i,i}=1$ or $q_{i,i}=-1$ holds for any integer $i=1,\ldots,k$. Thus, the number $\mathrm{Tr}(\masm)$ is an integer. Hence, the number $\mathrm{Tr}(P(\masm,l))$ is an integer.
\end{proof}
By Corollary \ref{char1}, Proposition \ref{char2} and Lemma \ref{KroLem}, we have the following proposition. 
\begin{prop}\label{char3}
Let $\sigma$ be an element of $S_n$, and let $\{\lambda_1,\ldots,\lambda_m\}$ be the cycle structure of $\sigma$. Then, we have 
\[ \chi_{\masm,n}(\sigma)=k^{s_1(\sigma)}\mathrm{Tr}(\masm)^{s_2(\sigma)}\]
where $s_1(\sigma)$ is the number of $l=1,\ldots,m$ such that $2\nmid \lambda_l$ holds, and $s_2(\sigma)$ is the number of $l=1,\ldots,m$ such that $2\mid \lambda_l$ holds. In particular, the character $\chi_{\masm,n}$ is an integer-valued character.
\end{prop}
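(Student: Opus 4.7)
The plan is to reduce the computation to a direct trace calculation on the tensor-product matrix supplied by Corollary \ref{char1}, and then apply Proposition \ref{char2} factor by factor.

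First, I would invoke the fact that $\chi_{\masm,n}$ is a class function, so its value on $\sigma$ depends only on the cycle structure $\{\lambda_1,\ldots,\lambda_m\}$. Hence I may assume without loss of generality that $\sigma$ is the specific element $\pi_1\pi_2\cdots\pi_m$ appearing in Corollary \ref{char1}, namely the product of the consecutive cycles $(1\ 2\cdots \lambda_1)$, $(\lambda_1+1\ \cdots\ \lambda_1+\lambda_2)$, and so on. By that corollary, the representation matrix of $\rho_{\masm,n}(\sigma)$ in the basis $B_n$ is the tensor product $P(\masm,\lambda_1)\otimes\cdots\otimes P(\masm,\lambda_m)$.

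Next I would apply the Kronecker trace identity (Lemma \ref{KroLem}), giving
\[ \chi_{\masm,n}(\sigma)=\mathrm{Tr}\bigl(P(\masm,\lambda_1)\otimes\cdots\otimes P(\masm,\lambda_m)\bigr)=\prod_{l=1}^{m}\mathrm{Tr}(P(\masm,\lambda_l)). \]
For each factor with $\lambda_l\geq 2$, Proposition \ref{char2} tells me that $\mathrm{Tr}(P(\masm,\lambda_l))$ equals $\mathrm{Tr}(\masm)$ when $\lambda_l$ is even and equals $k$ when $\lambda_l$ is odd. The case $\lambda_l=1$ has to be handled by hand, but since $P(\masm,1)=(F_{0,j,i})_{i,j}=(\delta_{j,i})_{i,j}=I_k$, its trace is $k$, which matches the rule for odd $\lambda_l$. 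Grouping the factors by parity of $\lambda_l$ gives $k^{s_1(\sigma)}\mathrm{Tr}(\masm)^{s_2(\sigma)}$, as desired.

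For the final assertion that $\chi_{\masm,n}$ is integer-valued, I would note that the defining condition $q_{i,i}q_{i,i}=1$ forces $q_{i,i}\in\{1,-1\}$, so $\mathrm{Tr}(\masm)=\sum_{i=1}^{k}q_{i,i}\in\mathbb{Z}$; combined with $k\in\mathbb{Z}$, the formula shows $\chi_{\masm,n}(\sigma)\in\mathbb{Z}$ for every $\sigma$. I do not foresee a serious obstacle: the only mildly delicate point is justifying the $\lambda_l=1$ fixed points (which Proposition \ref{char2} does not cover), but this is immediate from the definition of $P(\masm,1)$.
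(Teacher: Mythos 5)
Your proof is correct and follows essentially the same route the paper takes, which is to cite Corollary \ref{char1}, Lemma \ref{KroLem}, and Proposition \ref{char2} and leave the details implicit. You go one step further by noticing that Proposition \ref{char2} is stated only for $l\geq 2$ and handling the $\lambda_l=1$ fixed points explicitly via $P(\masm,1)=I_k$, a small gap the paper's one-line justification silently skips.
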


\begin{rmk}
Remark that if matrices $\masm=(q_{i,j})_{i,j}, \masm'=(q'_{i,j})_{i,j}\in M(k,\mathbb{C})$ satisfying $q_{i,j}q_{j,i}=1$ and $q'_{i,j}q'_{j,i}=1$ for any integers $i,j=1,\ldots k$, if $\mathrm{Tr}(\masm)=\mathrm{Tr}(\masm')$ then two representation $\rho_{\masm,n}$ and $\rho_{\masm',n}$ are isomorphic.
\end{rmk}

First, we calculate $E_{Nr}(\lambda_t(\chi_{\masm,n})(\sigma))$ when $\sigma=(1\ 2\cdots n)$. Next, we calculate when $\sigma=(1\ 2\cdots n)^r$ for any integer $r\geq 1$. Finally, we consider all $\sigma\in S_n$ by using Theorem \ref{PMMaintheorem1}.

To calculate $E_{Nr}(\lambda_t(\chi))$, we use the following identity.
\begin{defi}
For any integer $l$, we define $M(l)=\vect{m}\in\Nr{\mathbb{C}}$ by
\[ m_n:=\dfrac{1}{n}\sum_{d\mid n}\mu\Big(\dfrac{n}{d}\Big)l^n\]
for any integer $n\geq 1$.
\end{defi}
Note that if $l\geq 1$, then $m_n$ is said to be the necklace polynomial in \cite{MR}. 

For $M(l)$, the following proposition holds.

\begin{lemm}\label{QQpoly}
The followings hold.
\begin{itemize}
\item [{\rm (1)}] The $n$-th element of $\phi(M(l))$ is $l^n$ for any integer $l$ and $n\geq 1$.
\item [{\rm (2)}] We have $F_r(M(l))=M(l^r)$ for any integer $l$ and $r\geq 1$.
\end{itemize}
\end{lemm}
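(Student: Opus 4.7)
The proposal is to prove (1) by a direct Möbius-inversion computation, and then deduce (2) formally from (1) using the fact that $\phi$ commutes with $F_r$ and is injective on $\mathbb{Z}$-torsion free rings.

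For part (1), I would unfold the definition of $\phi$: the $n$-th element of $\phi(M(l))$ is
\[ \sum_{d\mid n} d\, m_d \;=\; \sum_{d\mid n}\sum_{e\mid d}\mu(d/e)\, l^{e}, \]
where I read $l^e$ (rather than the printed $l^n$) inside the definition of $m_d$, since the intended object is the classical necklace polynomial. Interchanging the order of summation and substituting $d=ef$ with $f\mid n/e$, this becomes
\[ \sum_{e\mid n} l^{e}\sum_{f\mid n/e}\mu(f). \]
The inner sum vanishes unless $n/e=1$, in which case it is $1$. Only the term $e=n$ survives, giving $l^n$. This is the well-known identity $\sum_{d\mid n}d\,m_d = l^n$ dual to the definition of the necklace polynomial.

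For part (2), I would invoke Proposition \ref{phiVF}, which asserts $\phi\circ F_r = F_r\circ\phi$, together with the description of $F_r$ on $\Gh{R}$ which sends $\vect{a}$ to $(a_r,a_{2r},a_{3r},\ldots)$. Combining with part (1), I obtain for each $n\ge 1$
\[ \bigl(\phi(F_r(M(l)))\bigr)_n \;=\; \bigl(F_r(\phi(M(l)))\bigr)_n \;=\; \phi(M(l))_{rn} \;=\; l^{rn} \;=\; (l^r)^n \;=\; \phi(M(l^r))_n. \]
Since $R=\mathbb{C}$ is $\mathbb{Z}$-torsion free, Proposition \ref{prophi} gives that $\phi$ is injective, so $F_r(M(l)) = M(l^r)$.

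I do not expect any real obstacle here: the only subtlety is noticing (and silently correcting) the apparent typo $l^n$ in the definition of $m_n$, without which part (1) would degenerate and part (2) would be trivial but wrong. Once $m_n$ is understood as the classical necklace polynomial $\tfrac{1}{n}\sum_{d\mid n}\mu(n/d)\,l^d$, both statements fall out from Möbius inversion and the injectivity-plus-naturality package already established in \S 2.
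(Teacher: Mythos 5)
Your proof of (1) is essentially identical to the paper's: both observe (silently correcting the typo $l^n\to l^d$ in the definition of $m_n$) that $\sum_{d\mid n}d\,m_d = l^n$ is precisely one direction of M\"obius inversion applied to the necklace polynomial. For (2), however, you take a genuinely different route. The paper simply cites Metropolis and Rota \cite[p.104, Proposition~2]{MR} for the identity $F_r(M(l))=M(l^r)$, whereas you derive it from (1) by pushing through $\phi$: using Proposition~\ref{phiVF} to commute $\phi$ with $F_r$, computing $\bigl(F_r(\phi(M(l)))\bigr)_n = \phi(M(l))_{rn} = l^{rn} = \phi(M(l^r))_n$, and then invoking injectivity of $\phi$ over the $\mathbb{Z}$-torsion-free ring $\mathbb{C}$ (Proposition~\ref{prophi}). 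Your argument is self-contained and actually fits more cleanly into the paper's own machinery, since it reuses the ``inject into $\Gh{R}$, compute componentwise, pull back'' pattern that the paper employs repeatedly elsewhere (e.g.\ in Propositions~\ref{repmat2} and~\ref{QQ1}); the paper's citation is shorter but relies on an external source. Both are correct; yours buys transparency at the cost of a few extra lines.
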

\begin{proof}
For the statement (1), the $n$-th element of $\phi(M(l))$ is 
\[ \sum_{d\mid n}d m_d=\sum_{d\mid n}\sum_{d'\mid d}\mu\Big(\dfrac{d}{d'}\Big)l^{d'}=l^n\]
by the \Mobius\ inversion formula, where $\vect{m}=M(l)$. The statement (2) was proved by Metropolis and Rota \cite[p.104, Proposition 2]{MR}.
\end{proof}
First, we consider $E_{Nr}(\lambda_t(\chi_{\masm,n})(\sigma))$ when $\sigma=(1\ 2\cdots\ n)$.

\begin{prop}\label{QQ1}
Let $n\geq 1$ be an integer and put $\sigma=(1\ 2\cdots n)$. Then,
\[ E_{Nr}(\lambda_t(\chi_{\masm,n})(\sigma))=T_n(M(\mathrm{Tr}(\masm))+W_{2^c}(M(k)-M(\mathrm{Tr}(\masm))))\]
holds where $c\geq 1$ is the maximum integer satisfying $2^c\mid n$.
\end{prop}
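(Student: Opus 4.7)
The plan is to prove the identity after applying the injective ring homomorphism $\phi : \Nr{\mathbb{C}} \to \Gh{\mathbb{C}}$ (injectivity from Proposition \ref{prophi}), reducing everything to a componentwise comparison in $\Gh{\mathbb{C}}$. On the left, $\phi \circ E_{Nr} = z$ by Theorem \ref{YauTheo}, so the $m$-th component of $\phi(E_{Nr}(\lambda_t(\chi_{\masm,n})(\sigma)))$ equals $\psi^m(\chi_{\masm,n})(\sigma) = \chi_{\masm,n}(\sigma^m)$ by Lemma \ref{valu1}. On the right, since $\phi$ preserves $T_n$ (Proposition \ref{lemm13}) and $W_{2^c}$ (Proposition \ref{phiW}) and is a ring homomorphism, I can push $\phi$ inside and compute using the explicit descriptions of $\phi(M(l))$ (Lemma \ref{QQpoly}(1)) together with the formulas for $T_n$ and $W_{2^c}$ on $\Gh{\mathbb{C}}$ (the latter from Lemma \ref{lemmW}).

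First, I would compute the right-hand side componentwise. Write $n = 2^c n'$ with $n'$ odd. By Lemma \ref{QQpoly}(1), the $m$-th component of $\phi(M(\mathrm{Tr}(\masm)))$ is $\mathrm{Tr}(\masm)^m$ and of $\phi(M(k))$ is $k^m$. By Lemma \ref{lemmW}, the $m$-th component of $W_{2^c}(\phi(M(k)) - \phi(M(\mathrm{Tr}(\masm))))$ is $k^m - \mathrm{Tr}(\masm)^m$ when $2^c \mid m$ and $0$ otherwise. Adding $\phi(M(\mathrm{Tr}(\masm)))$ gives an element of $\Gh{\mathbb{C}}$ whose $m$-th component is $k^m$ if $2^c \mid m$ and $\mathrm{Tr}(\masm)^m$ otherwise. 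Applying $T_n$ (whose effect on $\Gh{\mathbb{C}}$ replaces the $m$-th component by the $(m,n)$-th), the $m$-th component of the right-hand side becomes $k^{(m,n)}$ when $2^c \mid (m,n)$ and $\mathrm{Tr}(\masm)^{(m,n)}$ otherwise.

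Next, I would compute the left-hand side componentwise by determining the cycle structure of $\sigma^m$. Writing $d = (m,n)$, the usual argument (a repeated application of Lemma \ref{SN1}(2) followed by Lemma \ref{SN1}(1)) shows that $\sigma^m$ is a product of $d$ disjoint cycles, each of length $n/d$. Plugging into Proposition \ref{char3} with $s_1 = d$, $s_2 = 0$ when $n/d$ is odd and $s_1 = 0$, $s_2 = d$ when $n/d$ is even, I get $\chi_{\masm,n}(\sigma^m) = k^{(m,n)}$ or $\mathrm{Tr}(\masm)^{(m,n)}$ according as $n/(m,n)$ is odd or even.

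Finally, to match the two formulas I just need the equivalence ``$n/(m,n)$ is odd $\iff$ $2^c \mid (m,n)$'', which is immediate from $n = 2^c n'$ with $n'$ odd. This gives equality of $\phi$ applied to both sides in every component, and injectivity of $\phi$ finishes the proof. The main obstacle is really just the careful cycle-structure bookkeeping for $\sigma^m$ via Lemma \ref{SN1} and the parity dichotomy; once that is in hand the rest is formal manipulation of the operators $T_n$, $W_{2^c}$, $M(l)$ through $\phi$.
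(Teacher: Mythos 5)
Your proof is correct and follows the same basic strategy as the paper: apply the injective map $\phi$, identify the components of the left side as $\chi_{\masm,n}(\sigma^m)$, and compare componentwise with the right side using the descriptions of $T_n$, $W_{2^c}$, and $M(l)$ through $\phi$. The small difference is in how the comparison is organized. The paper first observes that both sides satisfy $a_m = a_{(m,n)}$ for all $m$ — the left side because $\chi_{\masm,n}$ is integer-valued (Lemma \ref{lemm12}), the right side because it lies in the image of $T_n$ (Proposition \ref{ImageTr}) — and therefore only compares the components at divisors $d\mid n$, where Lemma \ref{SN1}(2) alone gives the cycle structure of $\sigma^d$. You instead compute the $m$-th component for arbitrary $m$, which forces you to identify the cycle structure of $\sigma^m$ for all $m$ (via SN1(2) applied to $\sigma^{(m,n)}$, then SN1(1) on each resulting cycle). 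Both routes work; the paper's divisor-reduction trick is marginally more economical because it avoids the extra coprime-power step, while your version is perhaps more transparent as a direct verification. Either way the key parity dichotomy ``$n/(m,n)$ odd $\iff 2^c\mid(m,n)$'' is identical.
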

\begin{proof}
We show that two elements
\[a=\vect{a}=\phi(E_{Nr}(\lambda_t(\chi_{\masm,n})(\sigma)))\]
and 
\[b=\vect{b}=\phi\circ T_n(M(\mathrm{Tr}(\masm))+W_{2^c}(M(k)-M(\mathrm{Tr}(\masm))))\]
are equal. To show it, we prove three identities $a_d=a_{(n,d)}$ for any integer $d\geq 1$, $b_d=b_{(n,d)}$ for any integer $d\geq 1$ and $a_d=b_d$ if $d\mid n$.

First, we consider the element $a$. By Theorem \ref{YauTheo}, we have $\phi(E_{Nr}(\lambda_t(\chi_{\masm,n})(\sigma)))=z(\lambda_t(\chi_{\masm,n})(\sigma))$. Thus, 
\[ a_d=\psi^d(\chi_{\masm,n})(\sigma)=\chi_{\masm,n}(\sigma^d).\]
By Proposition \ref{char3}, $\chi_{\masm,n}$ is an integer-valued character. Thus we have $a_d=a_{(n,d)}$ by Lemma \ref{lemm12}. If $d\mid n$, then $\sigma^d$ is a $d$-times multiple of cycles of length $n/d$ by Lemma \ref{SN1}. Hence, we have
\begin{eqnarray}\label{QQ11}
\chi_{\masm,n}(\sigma^d)= \begin{cases} k^{d} &\mbox{if}\ 2 \nmid n/d,\\ (\mathrm{Tr}(\masm))^{d} & \mbox{if}\ 2\mid n/d.\end{cases}
\end{eqnarray}

Next, we consider the element $b$. By Propositions \ref{lemm13} and \ref{phiW},
\[\vect{b}=T_n(\phi(M(\mathrm{Tr}(\masm)))+W_{2^c}(\phi(M(k))-\phi(M(\mathrm{Tr}(\masm)))))\]
holds. Thus, the element $b$ belongs to the image of $T_n$, which means that $b_d=b_{(n,d)}$ holds by Proposition \ref{ImageTr} (2). If $d\mid n$, then
\begin{eqnarray}\label{QQ12}
b_d=\begin{cases} k^d & \mbox{if}\ 2^c\mid d, \\ (\mathrm{Tr}(\masm))^d & \mbox{if}\ 2^c\nmid d\end{cases}
\end{eqnarray}
by Lemmas \ref{lemmW} and \ref{QQpoly}.

A divisor $d$ of $n$ satisfies $2\nmid n/d$ if and only if $2^c\mid d$ holds. Thus, $a_d=b_d$ holds for any divisor $d$ of $n$ by (\ref{QQ11}) and (\ref{QQ12}). Hence, we have $a=b$. Since $\mathbb{C}$ is $\mathbb{Z}$-torsion free, this proposition holds.
\end{proof}

Next, we have the following proposition.
\begin{prop}\label{repmat2}
For any divisor $r$ of $n$ we have 
\[ F_r(\alpha)=T_{n/r}( M(\mathrm{Tr}(\masm)^r)+W_{2^{c-f}}(M(k^r)-M(\mathrm{Tr}(\masm)^r)))\]
where $f$ is the maximum number satisfying $2^f\mid r$ and 
\[ \alpha=T_n(M(\mathrm{Tr}(\masm))+W_{2^c}(M(k)-M(\mathrm{Tr}(\masm)))).\]
\end{prop}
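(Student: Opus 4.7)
The plan is to compute $F_r(\alpha)$ directly by propagating $F_r$ through the defining expression of $\alpha$, using the commutation relations with the truncated, Frobenius and $W$-operations already established.

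First I would apply Proposition \ref{ComCF} to move $F_r$ past the outer truncation. Since $r\mid n$ we have $(r,n)=r$, hence
\[ F_r\circ T_n = T_{n/(r,n)}\circ F_{(r,n)} = T_{n/r}\circ F_r. \]
Second, I would use that $F_r$ is a ring homomorphism (Proposition \ref{main4lemm0}) to distribute $F_r$ across the sum inside, obtaining
\[ F_r(\alpha)=T_{n/r}\bigl(F_r(M(\mathrm{Tr}(\masm)))+F_r\circ W_{2^c}(M(k)-M(\mathrm{Tr}(\masm)))\bigr). \]
Third, Proposition \ref{proW}(1) gives $F_r\circ W_{2^c}=W_{2^c/(r,2^c)}\circ F_r$, and Lemma \ref{QQpoly}(2) evaluates $F_r$ on necklace polynomials as $F_r(M(l))=M(l^r)$. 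Combining these,
\[ F_r(\alpha)=T_{n/r}\bigl(M(\mathrm{Tr}(\masm)^r)+W_{2^c/(r,2^c)}(M(k^r)-M(\mathrm{Tr}(\masm)^r))\bigr). \]

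The only remaining point, which is the small piece of arithmetic one has to verify, is that $2^c/(r,2^c)=2^{c-f}$ in the present situation. Since $r\mid n$ and $2^c$ is the largest power of $2$ dividing $n$, the largest power of $2$ dividing $r$ satisfies $f\leq c$; hence $(r,2^c)=2^f$ and $2^c/(r,2^c)=2^{c-f}$. Substituting yields exactly the asserted formula. I do not anticipate a genuine obstacle: all nontrivial content has already been packaged in Propositions \ref{ComCF}, \ref{proW}, \ref{main4lemm0} and Lemma \ref{QQpoly}, so the proof is essentially a bookkeeping exercise. The one place where care is needed is making sure the hypothesis $r\mid n$ is invoked (both to get $(r,n)=r$ and to ensure $f\leq c$); otherwise the commutation with $T_n$ or the simplification $(r,2^c)=2^f$ would fail.
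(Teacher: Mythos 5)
Your argument is correct and follows the same route as the paper: commute $F_r$ past $T_n$ via Proposition \ref{ComCF}, distribute $F_r$ over the sum, commute past $W_{2^c}$ via Proposition \ref{proW}, and evaluate on $M(l)$ via Lemma \ref{QQpoly}(2). The only difference is that you spell out why $(r,2^c)=2^f$ (using $r\mid n$ to get $f\le c$), a small arithmetic point the paper leaves implicit.
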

\begin{proof}
One has
\begin{eqnarray*}
F_r(\alpha)&=&F_r\circ T_n(M(\mathrm{Tr}(\masm))+W_{2^c}(M(k)-M(\mathrm{Tr}(\masm))))\\
&=&T_{n/(n,r)}\circ F_r(M(\mathrm{Tr}(\masm))+W_{2^c}(M(k)-M(\mathrm{Tr}(\masm))))\\
&=&T_{n/r}( F_r(M(\mathrm{Tr}(\masm)))+F_r\circ W_{2^c}(M(k)-M(\mathrm{Tr}(\masm))))\\
&=&T_{n/r}( F_r(M(\mathrm{Tr}(\masm)))+W_{2^{c-f}}(F_r(M(k))-F_r(M(\mathrm{Tr}(\masm)))))\\
&=&T_{n/r}( M(\mathrm{Tr}(\masm)^r)+W_{2^{c-f}}(M(k^r)-M(\mathrm{Tr}(\masm)^r))).
\end{eqnarray*}
where the second, fourth or fifth equality follows from Proposition \ref{ComCF}, Proposition \ref{proW} or Lemma \ref{QQpoly} (2), respectively.
\end{proof}
By Proposition \ref{repmat2}, we have the following corollary.

\begin{cor}\label{repmat3}
For any integers $n\geq 1$ and $r\geq 1$,
\[ E_{Nr}(\lambda_t(\chi_{\masm,n})(\sigma^r))=T_{n/r}( M(\mathrm{Tr}(\masm)^r)+W_{2^{a-b}}(M(k^r)-M(\mathrm{Tr}(\masm)^r)))\]
holds where $\sigma=(1\ 2\cdots n)$, and integer $a$ or $b$ is the maximum number satisfying $2^a\mid n$ holds or $2^b\mid n/r$ holds, respectively.
\end{cor}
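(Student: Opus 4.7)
The plan is to obtain the corollary as an essentially mechanical consequence of Theorem~\ref{main3FF}, Proposition~\ref{QQ1}, and Proposition~\ref{repmat2}. All three pieces are now in place: Theorem~\ref{main3FF} converts passing from $\sigma$ to $\sigma^r$ inside $E_{Nr}\circ\lambda_t$ into an application of the Frobenius operation $F_r$ on the necklace-ring side, Proposition~\ref{QQ1} identifies the necklace-ring image of $\lambda_t(\chi_{\masm,n})(\sigma)$ with the specific element $\alpha=T_n(M(\mathrm{Tr}(\masm))+W_{2^c}(M(k)-M(\mathrm{Tr}(\masm))))$, and Proposition~\ref{repmat2} computes $F_r(\alpha)$ for $r\mid n$ in the truncated $T_{n/r}$-form that appears on the right-hand side.

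Concretely, I would first apply Theorem~\ref{main3FF} to the character $\chi_{\masm,n}$ and the element $g=\sigma$, obtaining
\[ E_{Nr}(\lambda_t(\chi_{\masm,n})(\sigma^r))=F_r\circ E_{Nr}(\lambda_t(\chi_{\masm,n})(\sigma)). \]
Next, I would substitute Proposition~\ref{QQ1} into the inner $E_{Nr}$ to replace it by $\alpha$, and finally invoke Proposition~\ref{repmat2} to rewrite $F_r(\alpha)$ as
\[ T_{n/r}(M(\mathrm{Tr}(\masm)^r)+W_{2^{c-f}}(M(k^r)-M(\mathrm{Tr}(\masm)^r))), \]
where $c$ is the largest integer with $2^c\mid n$ and $f$ the largest integer with $2^f\mid r$.

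The only step requiring care is matching the two exponent conventions. In the notation of the corollary, $a=c$ and $b$ is the largest integer with $2^b\mid n/r$. Writing $n=2^a m$ and $r=2^f r'$ with $m,r'$ odd and $r'\mid m$, one has $n/r=2^{a-f}(m/r')$, so $b$ and $f$ are related by $f=a-b$; translating the exponent $c-f$ of Proposition~\ref{repmat2} into the $(a,b)$-notation of the corollary is then a direct substitution.

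I do not foresee any conceptual obstacle: the entire argument is a one-line synthesis of the three preceding results, and the only work is to keep the $2$-adic valuations $a$, $b$, $c$, and $f$ straight. All of the substantive content has already been packaged into Theorem~\ref{main3FF} (which required Lemma~\ref{valu1} together with the compatibility of $z$ with $F_r$), and Proposition~\ref{repmat2} (which used Propositions~\ref{ComCF}, \ref{proW}, and Lemma~\ref{QQpoly}).
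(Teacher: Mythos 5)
Your outline matches the paper's own proof exactly: Theorem~\ref{main3FF} converts $\sigma^r$ into $F_r$ on the necklace-ring side, Proposition~\ref{QQ1} supplies $\alpha$, and Proposition~\ref{repmat2} computes $F_r(\alpha)$. The identification $c=a$ and the relation $f=a-b$ (valid since $r\mid n$ forces $v_2(n)=v_2(r)+v_2(n/r)$) are both correct. The problem is the final ``direct substitution,'' which you do not actually carry out: substituting $f=a-b$ into the exponent $c-f$ from Proposition~\ref{repmat2} gives
\[
c-f = a-(a-b) = b,
\]
not $a-b$ as the corollary asserts. These agree only when $a=2b$, i.e.\ when $v_2(r)=v_2(n/r)$, which is false in general.

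This is not a gap that can be filled by a cleverer argument; the formula coming out of your outline (and out of the paper's chain of lemmas) genuinely has exponent $b=v_2(n/r)$, not $a-b=v_2(r)$. A concrete check confirms it: take $n=8$, $r=2$, so $a=3$, $b=2$, $a-b=1$. Here $\sigma^2$ is a product of two $4$-cycles, and computing the ghost components of both sides shows the exponent must be $2=b$, not $1=a-b$. So the corollary's displayed formula, and the paper's one-line proof of it, contain a sign slip: either the exponent should read $W_{2^b}$, or $b$ should be redefined as the largest integer with $2^b\mid r$ (equivalently, the exponent is $v_2(n/r)$, the $2$-adic valuation of the common cycle length of $\sigma^r$). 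You should flag this explicitly rather than assume the substitution closes; as written, your last sentence asserts a reconciliation that your own arithmetic contradicts.
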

\begin{proof}
Put $\alpha=T_n(M(\mathrm{Tr}(\masm))+W_{2^a}(M(k)-M(\mathrm{Tr}(\masm))))$. Then
\[ F_r(\alpha)=T_{n/r}( M(\mathrm{Tr}(\masm)^r)+W_{2^{a-b}}(M(k^r)-M(\mathrm{Tr}(\masm)^r)))\]
by Proposition \ref{repmat2}. On the other hand, we have $F_r(\alpha)=E_{Nr}(\lambda_t(\chi_{\masm,n})(\sigma^r))$. Hence, this corollary holds.
\end{proof}

We calculated $\lambda_t(\chi_{\masm,n}))(\sigma)$ where $\sigma=(1\ 2\cdots n)$ in Proposition \ref{QQ1} and where the cycle structure $\{\lambda_1,\dots,\lambda_m\}$ of an element $\sigma\in S_n$ satisfies $\lambda_i=\lambda_j$ for any integer $i,j=1,\ldots m$ in Corollary \ref{repmat3}.

Finally, we state the calculation method when $\sigma$ is the product of two disjoint cycles by the following proposition. Hence, we can calculate $E_{Nr}(\lambda_t(\chi_{\masm,n}))(\sigma)$ for any $\sigma\in S_n$, because all elements of $S_n$ can be written as a product of cycles.

\begin{prop}\label{Jonesprod}
For any $\sigma\in S_m$ and $\tau\in S_n$, we have 
\[ \lambda_t(\chi_{m+n,\masm})(\sigma\tau)=\lambda_t(\chi_{m,\masm})(\sigma)\cdot_{\Lambda}\lambda_t(\chi_{n,\masm})(\tau). \]
where we identify the subgroup $\{\sigma\in S_{m+n}\mid \sigma(i)=i,\ i=m+1,\dots,n\}$ or $\{\tau\in S_{m+n}\mid \tau(i)=i,\ i=1,\dots,m\}$ of $S_{m+n}$ with $S_m$ or $S_n$, respectively.
\end{prop}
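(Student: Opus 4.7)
The plan is to identify the restriction of $\chi_{m+n,\masm}$ along the embedding $\iota: S_m\times S_n\hookrightarrow S_{m+n}$, $(\sigma,\tau)\mapsto \sigma\tau$, with the external product character $\chi_{m,\masm}\chi_{n,\masm}$ of Theorem \ref{main3MMM}, and then to pass $\lambda_t$ through the restriction using Lemma \ref{lemm3-1}.

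First I would establish at the level of linear maps the factorization
\[ \rho_{\masm,m+n}(\sigma\tau)=\rho_{\masm,m}(\sigma)\otimes \rho_{\masm,n}(\tau) \]
on $V^{\otimes(m+n)}=V^{\otimes m}\otimes V^{\otimes n}$. By Lemma \ref{generatorS} it suffices to check this on the Coxeter generators: for $1\le i\le m-1$ the definition of $\rho_{\masm,m+n}$ gives $\rho_{\masm,m+n}(\sigma_i)=\rho_{\masm,m}(\sigma_i)\otimes \identitymap{V}^{\otimes n}$, and for $m+1\le j\le m+n-1$ it gives $\rho_{\masm,m+n}(\sigma_j)=\identitymap{V}^{\otimes m}\otimes \rho_{\masm,n}(\sigma_{j-m})$. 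Since $\sigma\in S_m$ and $\tau\in S_n$ have disjoint supports in $S_{m+n}$ they commute, so multiplying any words in these two families of generators produces a pure tensor of the two operators. Taking traces then yields the character identity
\[ \chi_{m+n,\masm}(\sigma\tau)=\chi_{m,\masm}(\sigma)\,\chi_{n,\masm}(\tau), \]
which is precisely the statement that $\chi_{m+n,\masm}\circ\iota=\chi_{m,\masm}\chi_{n,\masm}$ in $CF(S_m\times S_n)$.

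Next, I would invoke Lemma \ref{lemm3-1} to conclude that $\mathrm{Res}^{S_{m+n}}_{S_m\times S_n}$ is a $\lambda$-homomorphism, so that for every $i\ge 0$ one has $\lambda^i(\chi_{m+n,\masm}\circ\iota)=\lambda^i(\chi_{m+n,\masm})\circ\iota$. Evaluating at $(\sigma,\tau)$ and summing in $t$ gives
\[ \lambda_t(\chi_{m+n,\masm})(\sigma\tau)=\lambda_t(\chi_{m,\masm}\chi_{n,\masm})((\sigma,\tau)). \]
Finally, applying Theorem \ref{main3MMM} to the right-hand side (with $G_1=S_m$, $G_2=S_n$, $\chi_1=\chi_{m,\masm}$, $\chi_2=\chi_{n,\masm}$) produces the desired equality in $\Lambda(\mathbb{C})$.

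The only substantive step is the tensor factorization of $\rho_{\masm,m+n}(\sigma\tau)$; once that is in hand, everything else is a clean application of results already in the paper. The mild obstacle is purely notational: one must be careful with the identifications $V^{\otimes(m+n)}\cong V^{\otimes m}\otimes V^{\otimes n}$ and with how the transpositions $\sigma_i$ of $S_m$ and $S_n$ reindex inside $S_{m+n}$, but this is controlled directly by the definition of $\rho_{\masm,n}$ on the generators.
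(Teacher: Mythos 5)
Your proof is correct and shares the paper's overall shape --- establish $\chi_{m+n,\masm}\circ\iota=\chi_{m,\masm}\chi_{n,\masm}$ in $CF(S_m\times S_n)$, then push through $\lambda_t$ --- but it executes both halves differently. For the character identity the paper invokes the closed formula $\chi_{\masm,n}(\sigma)=k^{s_1(\sigma)}\mathrm{Tr}(\masm)^{s_2(\sigma)}$ from Proposition \ref{char3} and notes additivity of the cycle-counting functions $s_1,s_2$ under juxtaposition of disjoint permutations; you instead establish the representation-level factorization $\rho_{\masm,m+n}\circ\iota=\rho_{\masm,m}\otimes\rho_{\masm,n}$ on Coxeter generators and take traces via Lemma \ref{KroLem}, which is more self-contained and bypasses Proposition \ref{char3} entirely. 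For the passage through $\lambda_t$ the paper redoes the ghost-map argument from scratch: it applies Lemma \ref{valu1} to identify the $d$-th component of $z(\lambda_t(\chi_{m+n,\masm})(\sigma\tau))$ with $\chi_{m,\masm}(\sigma^d)\chi_{n,\masm}(\tau^d)$ and concludes by injectivity of $z$, essentially unpacking the proof of Theorem \ref{main3MMM} a second time. Your route through Lemma \ref{lemm3-1} (restriction is a $\lambda$-homomorphism) together with a direct citation of Theorem \ref{main3MMM} is tidier and avoids repeating that computation. Both arguments are valid; yours is the more economical in reusing results already established.
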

\begin{proof}
First, we show $\Res{S_{m+n}}{S_m\times S_n}(\chi_{m+n,\masm})=\chi_{n,\masm}\chi_{m,\masm}$ as a character of the product group $S_m\times S_n$. For any $\sigma'\in S_m$ and let $\tau'\in S_n$, we have $s_1(\sigma'\tau')=s_1(\sigma')+s_1(\tau')$ and $s_2(\sigma'\tau')=s_1(\sigma')+s_2(\tau')$. Thus, we have $\chi_{m+n,\masm}(\sigma'\tau')=\chi_{n,\masm}(\sigma)\chi_{m,\masm}(\tau)$. Hence, we have $\Res{S_{m+n}}{S_m\times S_n}(\chi_{m+n,\masm})=\chi_{n,\masm}\chi_{m,\masm}$.

We use this fact. By Lemma \ref{valu1}, the $d$-th element of $z(\lambda_t(\chi_{m+n,\masm})(\sigma\tau))$ is 
\begin{eqnarray*}
\psi^d(\chi_{m+n,\masm})(\sigma\tau)&=&\chi_{m+n,\masm}(\sigma^d \tau^d)\\
&=&\Res{S_{m+n}}{S_m\times S_n}(\chi_{m+n,\masm})((\sigma^d,\tau^d))\\
&=&\chi_{m,\masm}(\sigma^d)\chi_{n,\masm}(\tau^d),
\end{eqnarray*}
which is the product of $d$-th elements of $z(\lambda_t(\chi_{m,\masm})(\sigma))$ and $z(\lambda_t(\chi_{n,\masm})(\tau))$. Thus, we have $ z(\lambda_t(\chi_{m+n,\masm})(\sigma\tau))=z(\lambda_t(\chi_{m,\masm})(\sigma)\cdot_{\Lambda}\lambda_t(\chi_{n,\masm})(\tau))$.

Since $\mathbb{C}$ is $\mathbb{Z}$-torsion free, the map $z$ is injective. It follows that we have $\lambda_t(\chi_{m+n,\masm})(\sigma\tau)=\lambda_t(\chi_{m,\masm})(\sigma)\cdot_{\Lambda}\lambda_t(\chi_{n,\masm})(\tau)$.
\end{proof}


By Proposition \ref{Jonesprod}, we can calculate $\lambda_t(\chi_{\masm, n})(\sigma)$ for all $\sigma\in S_n$, and we can apply Theorem \ref{PMMaintheorem1}.
\newpage
\def\thesection{\Alph{section}}

\setcounter{section}{0}
\section{Appendix}

In this section, we state some notions and propositions which are used in  this paper.

\subsection{Vector spaces}

For any $A\in M(k,\mathbb{C})$, we denote the trace of $A$ by $\mathrm{Tr}(A)$. 

For any $A,B\in M(k,\mathbb{C})$, we denote the Kronecker product of $A$ and $B$ by $A\otimes B$. 

For any complex vector space $V$, we denote the set of all bijective linear maps on $V$ by $GL(V)$.

Let $F:V\rightarrow V$ be a linear map of a vector space $V$. We define $\mathrm{Tr}(F)$ by the trace of the matrix of $F$ with respect to a basis $\{x_1,\ldots,x_n\}$. Note that $\mathrm{Tr}(F)$ does not depend on the basis $\{x_1,\ldots,x_n\}$.

For any vector spaces $V$ and $W$, we denote the direct sum of $V$ and $W$ by $V\oplus W$, and the tensor product of $V$ and $W$ by $V\otimes W$. 

Let $V$ and $W$ be vector spaces and let $F:V\rightarrow W$ be a linear map. For any integer $n\geq 1$, we denote $n$-times tensor product of $V$ by $V^{\otimes n}$, and we define a linear map $F^{\otimes n}:V^{\otimes n}\rightarrow W^{\otimes}$ by $F^{\otimes n}(v_1\otimes\cdots\otimes v_n):=F(v_1)\otimes\cdots\otimes F(v_n)$ for any $v_1,\ldots,v_n\in V$.

Let $V_1,V_2,W_1$ and $W_2$ be vector spaces and let $F_1:V_1\rightarrow W_1$ and $F_2:V_2\rightarrow W_2$ be linear maps. Then, we define a linear map $F_1\otimes F_2:V_1\otimes V_2\rightarrow W_1\otimes W_2$ by $F_1\otimes F_2(v_1\otimes v_2):=F_1(v_1)\otimes F_2(v_2)$ for any $v_1\in V_1$ and $v_2\in V_2$.

For the Kronecker product of two matrices, the following lemma holds from Lemma (v) in \cite[p.83]{Knu}.
\begin{lemm}\label{KroLem}
For any $A\in M(k_1,\mathbb{C})$ and $B\in M(k_2,\mathbb{C})$, we have $\mathrm{Tr}(A\otimes B)=\mathrm{Tr}(A)\mathrm{Tr}(B)$.
\end{lemm}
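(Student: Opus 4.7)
The plan is to prove the identity by direct computation using the entry-wise definition of the Kronecker product. Recall that if $A=(a_{ij})_{1\le i,j\le k_1}$ and $B=(b_{kl})_{1\le k,l\le k_2}$, then $A\otimes B$ is a $k_1k_2\times k_1k_2$ matrix whose entries are indexed by pairs: the $((i,k),(j,l))$-entry equals $a_{ij}b_{kl}$.

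First, I would identify the diagonal of $A\otimes B$: an index pair $((i,k),(j,l))$ lies on the diagonal precisely when $i=j$ and $k=l$. So the diagonal entries are exactly $a_{ii}b_{kk}$ for $1\le i\le k_1$ and $1\le k\le k_2$. Summing these gives
\begin{equation*}
\mathrm{Tr}(A\otimes B)=\sum_{i=1}^{k_1}\sum_{k=1}^{k_2}a_{ii}b_{kk}.
\end{equation*}

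Finally, I would factor this double sum as a product of single sums,
\begin{equation*}
\sum_{i=1}^{k_1}\sum_{k=1}^{k_2}a_{ii}b_{kk}=\Bigl(\sum_{i=1}^{k_1}a_{ii}\Bigr)\Bigl(\sum_{k=1}^{k_2}b_{kk}\Bigr)=\mathrm{Tr}(A)\mathrm{Tr}(B),
\end{equation*}
completing the proof. There is no real obstacle here; the only subtle point is fixing a convention for how the rows and columns of $A\otimes B$ are indexed by pairs, but since the trace is basis-independent, any convention yields the same result. Since the reference \cite{Knu} already records this as a standard fact, a one-paragraph computation of this form is all that is needed.
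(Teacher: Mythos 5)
Your proof is correct. The paper itself does not prove this lemma but simply cites it as a standard fact from Knutson \cite[p.83]{Knu}; your direct computation over the diagonal entries of the Kronecker product is exactly the standard argument and cleanly fills in what the paper leaves to the reference.
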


\subsection{Finite groups}
Let $G$ be a finite group.

\begin{lemm}\label{APP1}
Two elements $g_1,g_2\in G$ generate the same cyclic group $\circg{g_1}=\circg{g_2}$ if and only if there exists an integer $k\geq 1$ such that $g_1^k=g_2$ and $(k, O(g_1))=1$ hold. 
\end{lemm}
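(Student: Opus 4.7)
The plan is to prove both implications separately, using the standard fact that for a generator $g$ of a cyclic group of order $m=O(g)$, the order of $g^k$ equals $m/(k,m)$.

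For the forward direction, I assume $\circg{g_1}=\circg{g_2}$. Since $g_2\in\circg{g_1}$, I can write $g_2=g_1^k$ for some integer $k\geq 1$ (if $g_1=g_2=1$ take $k=1$, otherwise choose the least positive exponent). The two cyclic groups being equal forces $O(g_1)=O(g_2)$, so applying the order formula $O(g_1^k)=O(g_1)/(k,O(g_1))$ gives $(k,O(g_1))=1$.

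For the reverse direction, assume $g_2=g_1^k$ with $(k,O(g_1))=1$. Clearly $g_2\in\circg{g_1}$, hence $\circg{g_2}\subseteq\circg{g_1}$. For the opposite inclusion, I would use B\'ezout: choose integers $a,b$ with $ak+bO(g_1)=1$; then
\[
g_1=g_1^{ak+bO(g_1)}=(g_1^k)^a\cdot (g_1^{O(g_1)})^b=g_2^a\in\circg{g_2},
\]
so $\circg{g_1}\subseteq\circg{g_2}$ and equality holds.

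There is essentially no obstacle here; the only minor subtlety is arranging $k\geq 1$ in the forward direction (rather than just $k\in\mathbb{Z}$), which can be handled by adding a suitable multiple of $O(g_1)$ to any integer representative, since $g_1^{O(g_1)}=1$ and $(k+\ell O(g_1),O(g_1))=(k,O(g_1))$. The entire argument is a two- or three-line application of the order formula and the B\'ezout identity.
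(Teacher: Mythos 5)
Your proof is correct. The paper actually states Lemma \ref{APP1} in the appendix without any proof (it is cited as a standard fact about cyclic groups), so there is no paper argument to compare against; you have simply supplied the missing standard argument. Both directions are handled cleanly: the forward direction uses the order formula $O(g_1^k)=O(g_1)/(k,O(g_1))$ together with the observation that equality of the two cyclic groups forces $O(g_1)=O(g_2)$, and the reverse direction uses B\'ezout to exhibit $g_1$ as a power of $g_2$. The remark on normalizing $k$ to lie in $\{1,\ldots\}$ by adding a multiple of $O(g_1)$ correctly disposes of the one small technicality. Nothing is missing.
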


\begin{lemm}\label{APP2}
Let $g_1$ and $g_2$ be elements of $G$ with $\circg{g_1}\cap \circg{g_2}=\{1\}$ and $g_1g_2=g_2g_1$. Then, we have $O(g_1g_2)=[O(g_1),O(g_2)]$.
\end{lemm}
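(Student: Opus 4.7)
The plan is straightforward: show divisibility in both directions between $O(g_1g_2)$ and $[O(g_1),O(g_2)]$, using commutativity for one inclusion and the triviality of the intersection for the other.

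Write $n_1=O(g_1)$, $n_2=O(g_2)$, $m=[n_1,n_2]$, and $n=O(g_1g_2)$. First I would show $n\mid m$. Because $g_1g_2=g_2g_1$, the binomial-style expansion collapses and $(g_1g_2)^m=g_1^mg_2^m$. Since $n_1\mid m$ and $n_2\mid m$, both factors are $1$, so $(g_1g_2)^m=1$, whence $n\mid m$.

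Next I would show $m\mid n$, and this is the step where the hypothesis $\circg{g_1}\cap\circg{g_2}=\{1\}$ is essential. From $(g_1g_2)^n=1$ and commutativity we get $g_1^n g_2^n=1$, i.e.\ $g_1^n=g_2^{-n}$. The left side lies in $\circg{g_1}$ and the right side lies in $\circg{g_2}$, so both equal the identity by the intersection hypothesis. Hence $n_1\mid n$ and $n_2\mid n$, giving $m\mid n$.

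Combining the two divisibilities yields $n=m$, i.e.\ $O(g_1g_2)=[O(g_1),O(g_2)]$. The only subtle point is the use of $\circg{g_1}\cap\circg{g_2}=\{1\}$ to split $g_1^n g_2^n=1$ into $g_1^n=1$ and $g_2^n=1$; without this hypothesis one only gets the weaker conclusion $n\mid m$. No serious obstacle is anticipated.
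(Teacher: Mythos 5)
Your proof is correct and follows essentially the same two-divisibility strategy as the paper: $(g_1g_2)^{[n_1,n_2]}=g_1^{[n_1,n_2]}g_2^{[n_1,n_2]}=1$ gives one direction, and $g_1^n=g_2^{-n}\in\circg{g_1}\cap\circg{g_2}=\{1\}$ gives the other. Your writeup is in fact cleaner than the paper's, which repeats the second direction and contains a typo ("This means $O(g_1g_2)\mid[O(g_1),O(g_2)]$" where the reverse divisibility was established).
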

\begin{proof}
First, we show $(g_1g_2)^{[O(g_1),O(g_2)]}=g_1^{[O(g_1),O(g_2)]}g_2^{[O(g_1),O(g_2)]}=1$. In addition, if an arbitrary integer $k$ satisfies $(g_1g_2)^k=1$, then $O(g_1)\mid k$ and $O(g_2)\mid k$ hold by $\circg{g_1}\cap\circg{g_2}=\{1\}$. Hence, we have $[O(g_1),O(g_2)]\mid k$. This means $O(g_1g_2) \mid [O(g_1), O(g_2)]$.

Conversely, $(g_1g_2)^{O(g_1g_2)}=1$ implies that $g_1^{O(g_1g_2)} = g_2^{-O(g_1g_2)}\in\circg{g_1}\cap\circg{g_2}=\{1\}$ which means $O(g_1) \mid O(g_1g_2)$ and $O(g_2) \mid O(g_1 g_2)$. Hence we have $[O(g_1), O(g_2)] \mid O(g_1g_2)$. Hence, $O(g_1g_2) = [O(g_1),O(g_2)]$ holds.
\end{proof}

\subsection{For symmetric groups}
In this paper, we denote the symmetric group on $n$-letters by $S_n$. For more detail, see \cite[p.124]{Knu}.

An element $\sigma$ of $S_n$ is a cycle of length $q$, there exists a subset $\{i_1,\ldots,i_q\}\subset\{1,\ldots, n\}$ such that $\sigma(i_1)=i_2, \sigma(i_2)=i_3,\ldots,\sigma(i_q)=i_1$ and $\sigma(m)=m$ hold where $m$ is not the form $i_j$. In the case, we write $\sigma=(i_1\ i_2\cdots i_n)$.

An element $\sigma\in S_n$ can be written as a product of disjoint cycles: $\sigma=(i_1\ i_2\cdots\ i_q)(j_1\ j_2\cdots\ j_r)\cdots(k_1\ k_2\cdots\ k_s)$ where all integers which have the form $i_p, j_q,\ldots,k_s$ are distinct. The cycle structure of $\sigma$ is a partition $(\lambda_1,\ldots,\lambda_m)$ of $n$ where $\lambda_i$ is the size of the cycles in the decomposition.

By Lemma \ref{APP2}, we have the following lemma.
\begin{lemm}\label{APP3}
An element of $\sigma\in S_n$ which has the cycle structure $(\lambda_1,\ldots,\lambda_m)$ satisfies that $O(\sigma)$ is the least common multiple of $\lambda_1,\ldots,\lambda_m$
\end{lemm}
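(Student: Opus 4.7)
The plan is to induct on the number $m$ of disjoint cycles and apply Lemma \ref{APP2} at each step. Write $\sigma = c_1 c_2 \cdots c_m$, where each $c_i$ is a cycle of length $\lambda_i$ and the supports $S_i := \mathrm{supp}(c_i) \subset \{1, \dots, n\}$ are pairwise disjoint (this is precisely the definition of the cycle structure).

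Before starting the induction I would record three standard facts. First, any two disjoint cycles commute, since they move disjoint sets of letters; in particular $c_i c_j = c_j c_i$ for all $i,j$, and therefore any partial product $c_1 \cdots c_k$ commutes with $c_{k+1}$. Second, a single cycle of length $\lambda$ has order $\lambda$; this handles the base case $m=1$ of the induction. Third, and most importantly, for any $k$ the cyclic subgroups $\langle c_1 \cdots c_k \rangle$ and $\langle c_{k+1} \rangle$ have trivial intersection in $S_n$.

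The induction itself is then almost mechanical: with $\tau := c_1 \cdots c_{m-1}$ we have $\sigma = \tau c_m$, the two factors commute, their cyclic subgroups intersect trivially by the third fact, so Lemma \ref{APP2} gives $O(\sigma) = [O(\tau), O(c_m)]$. By the inductive hypothesis $O(\tau) = [\lambda_1, \dots, \lambda_{m-1}]$ and by the second fact $O(c_m) = \lambda_m$, so $O(\sigma) = [[\lambda_1, \dots, \lambda_{m-1}], \lambda_m] = [\lambda_1, \dots, \lambda_m]$, which is exactly the desired identity.

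The main obstacle is the trivial-intersection claim, so let me outline how I would argue it. The element $\tau = c_1 \cdots c_{m-1}$ fixes every point outside $S_1 \cup \cdots \cup S_{m-1}$, hence so does every power $\tau^k$; in particular every $\tau^k$ fixes every point of $S_m$. On the other hand, if $c_m^\ell \neq 1$ then $c_m^\ell$ moves at least one point of $S_m$ (since $c_m$, viewed as a permutation of $S_m$, generates a cyclic group of order $\lambda_m$ acting faithfully there). Therefore an equation $\tau^k = c_m^\ell$ forces $c_m^\ell = 1$, and a symmetric argument using that $c_m^\ell$ fixes $S_1 \cup \cdots \cup S_{m-1}$ forces $\tau^k = 1$. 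This verifies $\langle \tau \rangle \cap \langle c_m \rangle = \{1\}$ and completes the inductive step.
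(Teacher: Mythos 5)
Your proof is correct and follows the same route the paper intends: the paper simply states ``By Lemma \ref{APP2}, we have the following lemma,'' leaving the induction on the number of disjoint cycles and the verification of the trivial-intersection and commutativity hypotheses implicit, whereas you spell these out explicitly. (A small redundancy: once you conclude $c_m^\ell = 1$ from the equation $\tau^k = c_m^\ell$, the equality $\tau^k = 1$ follows immediately, so the ``symmetric argument'' is not needed.)
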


For the cycle structure of $\sigma\in S_n$, it is known the following lemma.

\begin{lemm}{\rm \cite[p.125]{Knu}}\label{APP4}
Two elements of $S_n$ are conjugate if and only if they have the same cycle structure.
\end{lemm}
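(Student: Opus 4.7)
The plan is to prove both directions by making explicit how conjugation in $S_n$ acts on cycle notation. The central computational observation is that for any $\pi, \sigma \in S_n$ and any cycle $(i_1\ i_2 \cdots i_q)$ appearing in the disjoint cycle decomposition of $\sigma$, one has
\[ \pi\, (i_1\ i_2\ \cdots\ i_q)\, \pi^{-1} = (\pi(i_1)\ \pi(i_2)\ \cdots\ \pi(i_q)), \]
which I would verify by checking that both sides send $\pi(i_k)$ to $\pi(i_{k+1})$ (indices mod $q$) and fix every element outside $\{\pi(i_1),\ldots,\pi(i_q)\}$. Since disjoint cycles remain disjoint after applying the bijection $\pi$, conjugation by $\pi$ carries the full cycle decomposition of $\sigma$ term-by-term to a cycle decomposition of $\pi\sigma\pi^{-1}$ with exactly the same multiset of cycle lengths.

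For the forward direction (``conjugate $\Rightarrow$ same cycle structure''), I would apply the identity above to each cycle in the decomposition of $\sigma$: writing $\sigma$ as a product of disjoint cycles of lengths $\lambda_1,\ldots,\lambda_m$, the element $\pi\sigma\pi^{-1}$ is the product of the corresponding conjugated cycles, which have the same lengths $\lambda_1,\ldots,\lambda_m$. Hence the cycle structure is a conjugacy invariant.

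For the converse, suppose $\sigma$ and $\tau$ have the same cycle structure $(\lambda_1,\ldots,\lambda_m)$. Write explicit disjoint cycle decompositions
\[ \sigma = (i_{1,1}\ \cdots\ i_{1,\lambda_1})(i_{2,1}\ \cdots\ i_{2,\lambda_2})\cdots(i_{m,1}\ \cdots\ i_{m,\lambda_m}), \]
\[ \tau = (j_{1,1}\ \cdots\ j_{1,\lambda_1})(j_{2,1}\ \cdots\ j_{2,\lambda_2})\cdots(j_{m,1}\ \cdots\ j_{m,\lambda_m}), \]
where in each decomposition one includes every element of $\{1,\ldots,n\}$ (fixed points appearing as length-one cycles). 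Since the indices $i_{r,s}$ exhaust $\{1,\ldots,n\}$ without repetition, and similarly for $j_{r,s}$, I can define $\pi \in S_n$ by $\pi(i_{r,s}) := j_{r,s}$ for all $r,s$; this $\pi$ is a bijection. Applying the conjugation identity cycle-by-cycle then gives $\pi\sigma\pi^{-1} = \tau$.

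The only real subtlety, though it is not hard, is the aesthetic one: handling fixed points cleanly. I would resolve it at the outset by adopting the convention that disjoint cycle decompositions include all length-one cycles, so that the indices $\{i_{r,s}\}$ form a partition of $\{1,\ldots,n\}$ and $\pi$ is automatically a well-defined permutation of $\{1,\ldots,n\}$. With that convention in place the proof is essentially the single verification that conjugating a cycle $(i_1\ \cdots\ i_q)$ by $\pi$ yields $(\pi(i_1)\ \cdots\ \pi(i_q))$, and both directions follow immediately.
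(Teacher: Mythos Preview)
Your proof is correct and is the standard argument for this classical fact. Note that the paper does not actually give its own proof of this lemma: it is stated in the appendix with a citation to \cite[p.125]{Knu} and no proof is supplied, so there is nothing to compare against beyond the reference.
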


\subsection{For representations of finite groups}
Let $G$ be a finite group.

We define a group homomorphism $\rho:G\rightarrow GL(V)$ by a representation of $G$, where the vector space $V$ is a complex vector space whose dimension is finite. We define the degree of $\rho$ by $\dim_{\mathbb{C}}(V)$. 

The character of $\rho$, which is denoted by $\chi$, is the map $\chi:G\rightarrow\mathbb{C}$ defined by $\chi(g)=\mathrm{Tr}(\rho(g))$ for any $g\in G$.

Next, we define a $G$-set. A $G$-set $X$ is a set equipped with a map 
\[ \iota:G\times X\rightarrow X,\quad \iota(g,x)=gx\]
which satisfies
\[ g_1(g_2x)=(g_1g_2)x,\quad 1x=x\]
for any $g_1,g_2\in G$ and $x\in X$ (Note that we have mentioned it in \S 6.1).

Put $X=\{x_1,\ldots,x_n\}$. We define the permutation representation associated with $X$ by the representation $\rho_X:G\rightarrow GL(V)$ where $V$ is the $n$-dimensional vector space which is formally generated by $\{x_1,\ldots,x_n\}$, and $\rho_X(g)(x_i):=gx_i$ holds for any $g\in G$. Finally, we define the permutation character associated with $X$ by the character of $\rho_X$.

For any $g\in G$, the matrix of $\rho_X(g)$ with respect to the basis $\{x_1,\ldots,x_n\}$ is written by $(x_{i,j})_{1\leq i,j\leq n}$, satisfies 
\[ x_{i,j}=\begin{cases}1 & \mbox{if}\ x_j=gx_i, \\ 0 &\mbox{otherwise.}\end{cases}\]
Let $\chi$ be the permutation character associated with $X$. Then, $\chi(g)$ is the number of $x\in X$ such that $gx=x$ holds. 

\subsection{For commutative rings}

We consider the following two properties on a commutative ring $R$.

\begin{defi}
A commutative ring $R$ is $\mathbb{Z}$-torsion free if the map $r\mapsto nr$ is injective for any integer $n\geq 1$.
A commutative ring $R$ is a $\mathbb{Q}$-algebra if $R$ contains a subring which is isomorphic to $\mathbb{Q}$ as rings.
\end{defi}

With the \Mobius\ function, the following proposition holds.

\begin{prop}[\Mobius\ inversion formula]
Let $\vect{a}$ and\\ $\vect{b}$ be infinite vectors of a commutative ring $R$. Two elements $a$ and $b$ satisfy 
\[ b_n=\sum_{d\mid n}a_d\]
for any integer $n\geq 1$ if and only if 
\[ a_n=\sum_{d\mid n}\mu\Big(\dfrac{n}{d}\Big)b_d\]
holds for any integer $n \geq 1$.
\end{prop}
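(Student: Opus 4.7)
The plan is to deduce both directions from the single arithmetic identity
\[ \sum_{d\mid n}\mu(d)=\begin{cases} 1 & \text{if}\ n=1,\\ 0 & \text{if}\ n\geq 2,\end{cases} \]
which is the defining property of the \Mobius\ function and can be assumed (or quickly established by induction on $n$ using the multiplicativity of $\mu$). Both directions then reduce to swapping the order of a double sum over chains of divisors.

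For the forward direction, I would assume $b_n=\sum_{d\mid n}a_d$ and compute
\[ \sum_{d\mid n}\mu\Big(\dfrac{n}{d}\Big)b_d=\sum_{d\mid n}\mu\Big(\dfrac{n}{d}\Big)\sum_{e\mid d}a_e=\sum_{e\mid n}a_e\sum_{\substack{d:\,e\mid d,\ d\mid n}}\mu\Big(\dfrac{n}{d}\Big). \]
Then I would reparametrize the inner sum by writing $d=e d'$ with $d'\mid n/e$, so that $n/d=(n/e)/d'$, and as $d'$ ranges over divisors of $n/e$ so does $k:=(n/e)/d'$. Hence the inner sum equals $\sum_{k\mid (n/e)}\mu(k)$, which by the key identity is $1$ if $e=n$ and $0$ otherwise. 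Collapsing gives $a_n$.

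The converse is symmetric: assuming $a_n=\sum_{d\mid n}\mu(n/d)b_d$, I would substitute and swap sums to get
\[ \sum_{d\mid n}a_d=\sum_{d\mid n}\sum_{e\mid d}\mu\Big(\dfrac{d}{e}\Big)b_e=\sum_{e\mid n}b_e\sum_{\substack{d:\,e\mid d,\ d\mid n}}\mu\Big(\dfrac{d}{e}\Big), \]
and again write $d=ed'$ so that $\mu(d/e)=\mu(d')$ and $d'$ ranges over divisors of $n/e$; the key identity then picks out $e=n$, yielding $b_n$.

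There is no real obstacle here beyond careful reindexing; the only subtle point is ensuring that the double-sum interchange is legitimate, which is immediate because both sums are finite (indexed by divisors), and that the computations go through over an arbitrary commutative ring $R$ since no division or cancellation is used, only the integer-coefficient identity $\sum_{d\mid n}\mu(d)=[n=1]$ applied inside $R$.
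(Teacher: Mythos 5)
Your proof is correct and complete. The paper simply states the \Mobius\ inversion formula in the Appendix as a standard fact without supplying any proof, so there is no argument to compare against; the double-sum interchange you use, together with the identity $\sum_{d\mid n}\mu(d)=[n=1]$, is the classical proof, and your observation that it is valid over an arbitrary commutative ring because only integer coefficients and finite sums are involved is exactly the point that makes the statement applicable throughout the paper (e.g.\ in the proof of Proposition \ref{prophi}).
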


We state the Gauss Lemma.

\begin{lemm}[\text {\cite [Lemma 3.17]{Ste}}]\label{Gauss}
Let $f$ be a polynomial over $\mathbb{Z}$ that is irreducible over $\mathbb{Z}$. Then $f$, considered as a polynomial over $\mathbb{Q}$, is also irreducible over $\mathbb{Q}$.
\end{lemm}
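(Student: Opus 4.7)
The plan is to argue by contradiction, using the notion of the content of an integer polynomial. Suppose $f \in \mathbb{Z}[t]$ is irreducible over $\mathbb{Z}$ but reducible in $\mathbb{Q}[t]$, so that $f = gh$ for some non-constant $g, h \in \mathbb{Q}[t]$. The first step is to clear denominators: choose positive integers $m, n$ so that $mg, nh \in \mathbb{Z}[t]$, and set $g_1 = mg$, $h_1 = nh$. This yields the identity $mn \cdot f = g_1 h_1$ inside $\mathbb{Z}[t]$.

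The heart of the argument is the multiplicativity of the content. For a nonzero $p \in \mathbb{Z}[t]$ define $c(p)$ to be the gcd of its coefficients, and call $p$ primitive if $c(p) = 1$. I would first prove the auxiliary statement that the product of two primitive polynomials is primitive. The standard route is to reduce modulo an arbitrary prime $\ell$: if $p, q \in \mathbb{Z}[t]$ are primitive, then their reductions $\bar p, \bar q \in \mathbb{F}_\ell[t]$ are nonzero, and since $\mathbb{F}_\ell[t]$ is an integral domain, $\overline{pq} = \bar p \bar q$ is nonzero, so no prime divides every coefficient of $pq$. From this, the multiplicativity $c(pq) = c(p)c(q)$ for arbitrary $p,q$ follows by factoring out contents.

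Applying this to $mn \cdot f = g_1 h_1$ gives $mn \cdot c(f) = c(g_1)c(h_1)$. Writing $g_1 = c(g_1) g_2$ and $h_1 = c(h_1) h_2$ with $g_2, h_2 \in \mathbb{Z}[t]$ primitive, we obtain $mn \cdot f = c(g_1)c(h_1) \, g_2 h_2 = mn \cdot c(f) \, g_2 h_2$, so after cancelling the integer $mn$ we get $f = c(f) g_2 h_2$ in $\mathbb{Z}[t]$. Since $\deg g_2 = \deg g \geq 1$ and $\deg h_2 = \deg h \geq 1$, this exhibits $f$ as a product of two non-unit factors in $\mathbb{Z}[t]$, contradicting the hypothesis that $f$ is irreducible over $\mathbb{Z}$.

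The main obstacle is really just establishing the content-multiplicativity cleanly; once that is in hand, the rest is formal bookkeeping. One should also be careful about the convention for ``irreducible over $\mathbb{Z}$'' (a polynomial like $2t$ factors non-trivially in $\mathbb{Z}[t]$, so the contradiction at the end has to exploit that both $g_2$ and $h_2$ are non-constant, rather than merely non-units, which is why the degree observation above is essential).
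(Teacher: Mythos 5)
Your argument is correct: it is the classical content/primitivity proof of Gauss's Lemma. The paper itself gives no proof of this statement, merely citing Stewart's \emph{Galois Theory} (Lemma 3.17), where essentially this same argument appears, so there is no internal proof to compare against. One small simplification worth noting: a polynomial of degree at least $1$ that is irreducible in $\mathbb{Z}[t]$ must already be primitive, since otherwise $f = c(f)\cdot\bigl(f/c(f)\bigr)$ is a nontrivial factorization; so in your final line $f = c(f)\,g_2 h_2$ one has $c(f)=1$ outright, and the contradiction comes purely from $g_2$ and $h_2$ being non-constant, exactly as you flagged in your closing remark.
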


\subsection{For semirings and their ring completion}
In this section, we tell the notion of the ring completion. For more detail, see \cite[\S 9.3]{Hus}.

We define a semiring by a triple $(S,\alpha,\mu)$ where $S$ is a set, the map $\alpha:S\times S\rightarrow S$ is the addition function usually denoted by $\alpha(a,b)=a+b$, and the map $\mu:S\times S\rightarrow S$ is the multiplication function usually denoted by $\mu(a,b)=ab$, and satisfies all axioms of a ring except the existence of negative or additive inverse. For simplicity, we denote a semiring $(S,\alpha,\mu)$ by $S$.

We define a semiring homomorphism from a semiring $S$ to a semiring $S'$ by a map $f:S\rightarrow S'$ such that $f(a+b)=f(a)+f(b)$, $f(ab)=f(a)f(b)$ and $f(0)=0$ hold.

\begin{prop}
For any semiring $S$, there exists a pair $(S^*,\theta)$, where $S^*$ is a ring and $\theta:S\rightarrow S^*$ is a semiring homomorphism such that if for any ring $R$ and a semiring homomorphism $f:S\rightarrow R$, there exists a ring homomorphism $g:S^*\rightarrow R$ such that $g\circ\theta=f$ holds.
\end{prop}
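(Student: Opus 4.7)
The plan is to construct $S^*$ explicitly by mimicking the Grothendieck group construction on the additive monoid of $S$, and then verify that the multiplication of $S$ descends to give $S^*$ a ring structure with the required universal property.

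First, I would define an equivalence relation on $S\times S$ by declaring $(a,b)\sim(c,d)$ iff there exists $k\in S$ with $a+d+k=b+c+k$; the auxiliary element $k$ is needed because the additive monoid of $S$ need not be cancellative. After checking reflexivity, symmetry, and transitivity (transitivity being the only nontrivial one, handled by adding the two witnesses), I would let $S^*$ be the set of equivalence classes $[a,b]$, with the intuition that $[a,b]$ represents the formal difference $a-b$. Addition is defined componentwise, $[a,b]+[c,d]:=[a+c,\,b+d]$, and multiplication by $[a,b]\cdot[c,d]:=[ac+bd,\,ad+bc]$; well-definedness of addition is immediate, while for multiplication one checks that replacing $(a,b)$ by an equivalent pair leaves the class of the product unchanged, then does the same in the second argument. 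The zero element is $[0,0]$, the unit is $[1,0]$, and the additive inverse of $[a,b]$ is $[b,a]$; the ring axioms reduce to the corresponding semiring axioms on representatives.

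Next, I would define $\theta:S\to S^*$ by $\theta(a):=[a,0]$. The identities $\theta(a+b)=[a+b,0]=[a,0]+[b,0]$ and $\theta(ab)=[ab,0]=[a,0]\cdot[b,0]$ and $\theta(0)=[0,0]$, $\theta(1)=[1,0]$ show that $\theta$ is a semiring homomorphism into the underlying semiring of the ring $S^*$.

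For the universal property, given a ring $R$ and a semiring homomorphism $f:S\to R$, I would define $g:S^*\to R$ by $g([a,b]):=f(a)-f(b)$. Well-definedness follows from the fact that if $a+d+k=b+c+k$ then $f(a)+f(d)+f(k)=f(b)+f(c)+f(k)$ in $R$, and $R$ is a ring (so $f(k)$ can be cancelled), giving $f(a)-f(b)=f(c)-f(d)$. Verification that $g$ respects addition and multiplication is a direct expansion, as is $g(\theta(a))=f(a)-f(0)=f(a)$. Uniqueness of $g$ follows because $S^*$ is additively generated by the image of $\theta$ together with additive inverses, so any ring homomorphism extending $f$ is forced to send $[a,b]$ to $f(a)-f(b)$.

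The only delicate step is well-definedness of multiplication under the equivalence relation with the extra $k$; this will be the main obstacle, because one has to carry the witness $k$ through a bilinear expression, and I would handle it by checking invariance under changing one coordinate at a time and exploiting the symmetry $[a,b]\cdot[c,d]=[c,d]\cdot[a,b]$ built into the formula. Everything else is a routine ring axiom verification transcribed from the semiring axioms on representatives.
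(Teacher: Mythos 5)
The paper does not actually prove this proposition; it merely states it and refers the reader to Husemoller for details. Your construction is exactly the standard Grothendieck ring completion that Husemoller carries out, and it is correct: the equivalence relation with a slack variable $k$ handles non-cancellativity, the formulas for addition and multiplication descend (the computation for multiplication works by multiplying the defining relation $a+b'+k=b+a'+k$ separately by $c$ and by $d$ and adding, which yields the witness $k'=k(c+d)$), and the induced map $g([a,b])=f(a)-f(b)$ is well defined, is a ring homomorphism, and is forced by $g\circ\theta=f$ since every class satisfies $[a,b]=\theta(a)-\theta(b)$.
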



Tomoyuki Tamura

Graduate School of Mathematics, Kyushu-University, 

Nishi-ku Fukuoka,  819-0395, Japan.

E-mail: t-tamura@math.kyushu-u.ac.jp

\end{document}